\newcommand\mathens[1]{\mathbb{#1}} 
\newcommand\todo[1]{\textcolor{red}{#1}} 
\newcommand{\ud}{\mathrm{d}}
\newcommand{\N}{\mathens{N}}
\newcommand{\Z}{\mathens{Z}}
\newcommand{\R}{\mathens{R}}
\newcommand{\C}{\mathens{C}}
\newcommand{\T}{\mathens{T}}
\newcommand{\CP}{\C\mathrm{P}}
\newcommand{\RP}{\R\mathrm{P}}
\newcommand\sphere[1]{\mathens{S}^{#1}}
\DeclareMathOperator{\supp}{supp}
\newcommand{\ham}{\mathrm{Ham}}
\newcommand{\hamc}{\mathrm{Ham}_{c}}
\newcommand{\cont}{\mathrm{Cont}}
\newcommand{\conto}{\mathrm{Cont}_{0}}
\newcommand{\contoc}{\mathrm{Cont}_{0}^\mathrm{c}}
\newcommand{\id}{\mathrm{id}}
\newcommand{\nlmas}\upmu
\DeclareMathOperator\ind{ind}
\DeclareMathOperator\fix{Fix}
\newtheorem{thm}{Theorem}[section]
\newtheorem{lem}[thm]{Lemma}
\newtheorem{cor}[thm]{Corollary}
\newtheorem{prop}[thm]{Proposition}
\newtheorem{prop-def}[thm]{Definition-proposition}
\newtheorem{conj}[thm]{Conjecture}
\theoremstyle{definition}
\newtheorem{definition}[thm]{Definition}
\theoremstyle{remark}
\newtheorem{exs}[thm]{Examples}
\newtheorem{rem}[thm]{Remark}
\newcommand\tpsi{\widetilde{\psi}}
\newcommand\tphi{\widetilde{\varphi}}
\newcommand\Leg{\mathcal{L}}
\newcommand\uLeg{\widetilde{\Leg}}
\newcommand\tLambda{\widetilde{\Lambda}}
\newcommand\tPhi{\widetilde{\Phi}}
\newcommand\tPsi{\widetilde{\Psi}}
\newcommand\cleq{\preceq}
\newcommand\spec{\mathrm{Spec}}
\DeclareFontFamily{U}{mathb}{\hyphenchar\font45}
\DeclareFontShape{U}{mathb}{m}{n}{
      <5> <6> <7> <8> <9> <10> gen * mathb
      <10.95> mathb10 <12> <14.4> <17.28> <20.74> <24.88> mathb12
}{}
\DeclareSymbolFont{mathb}{U}{mathb}{m}{n}
\DeclareMathSymbol{\cll}{3}{mathb}{"CE}
\let\@wraptoccontribs\wraptoccontribs\makeatother
\title{Contact non-squeezing in various closed prequantizations}
\author[P.-A. Arlove]{Pierre-Alexandre Arlove}
\address{P.-A. Arlove, Universit\'e de Strasbourg, IRMA UMR 7501, F-67000 Strasbourg, France}
\email{paarlove@unistra.fr}
\subjclass[2020]{53D10, 57R17, 58B20}
\keywords{Contact geometry, non-squeezing, lens spaces, generating functions, Givental's non-linear Maslov index, spectral invariants, orderability, contactomorphisms group}
\begin{document}

\maketitle

\begin{abstract}
We describe some contact non-squeezing phenomena, first in lens spaces then in strongly orderable closed prequantizations in the sense of Liu \cite{Liu2020}. To detect and quantify these non-squeezing phenomena we define and compute two different contact capacities. In the former case the contact capacity comes from the spectral selectors constructed by Allais, Sandon and the author by the means of generating functions and Givental’s non-linear Maslov index \cite{allais2024spectral}. In the latter case the contact capacity comes from the order spectral selectors constructed by Allais and the author \cite{allais2023spectral}.

    \end{abstract}

\section{Introduction}
Gromov's non-squeezing theorem \cite{Gro85} is one of the cornerstones, if not the cornestone,  of symplectic geometry. Indeed, while it had been known for a long time that a symplectic manifold does not carry any local invariant, the existence of global invariants finer than the volume was not clear until Gromov's work in \cite{Gro85} (see also \cite{gromov1986partial,Eliashbergwave}). Originally stated for the standard symplectic Euclidean space, this theorem has then been generalized to all symplectic manifolds by Lalonde-McDuff \cite{lalondemcduff}.

For contact geometry, considered sometimes as the odd-dimensional analogue of symplectic geometry, the story is somewhat different. In this geometry, the volume is not an invariant anymore and, depending on the contact manifold considered, global invariants might not exist at all. For example, in the standard contact Euclidean space, which is the local model of contact geometry, any bounded domain can be contactly squeezed inside any arbitrary small neighborhood. Therefore, in a general contact manifold, non-squeezing phenomena cannot be expected at small scale. Nonetheless, 20 years after Gromov's result Eliashberg, Kim and Polterovich \cite{EKP} overcame this additional difficulty and proved a non-squeezing theorem at large scale in the prequantization of the standard symplectic Euclidean space. Unlike symplectic geometry, generalization of non-squeezing phenomena to other contact manifolds remain however rare. To the knowledge of the author, such phenomena have been detected only in prequantization of Liouville manifolds (\cite{albers,CantNS,camel,San11,FraserNS,ChiuNS,fraser2023contactnonsqueezinglargescale}), some exotic contact spheres \cite{uljarevicNS} and the unitary cotangent of the torus endowed with its canonical contact structure (see for example \cite[Thm 2.4.1]{shapeeliashberg}).

In this paper we show that interesting non-squeezing phenomena happen also in lens spaces endowed with their standard contact structure and in strongly orderable closed prequantizations.\\

\subsection{Non-squeezing in contact lens spaces}\label{sec : intro ns lens space}\

\noindent
Let us fix $n\in\N_{>0}$ a positive integer. Endow the Euclidean space $\R^{2n+2}$ with the coordinate functions $((x_1,y_1),\cdots,(x_{n+1},y_{n+1}))$ and let $\sphere{2n+1}:=\left\{\sum\limits_{j=1}^{n+1}x_j^2+y_j^2=1\right\}$ be the standard Euclidean sphere of radius $1$ centered at $0$. Identifying $\R^{2n+2}$ with $\C^{n+1}$ via $\Phi : ((x_1,y_1),\cdots,(x_{n+1},y_{n+1})) \mapsto (x_1+iy_1,\cdots,x_{n+1}+iy_{n+1})$ we get an action of $\sphere{1}:=\{e^{i\theta}\ |\ \theta\in\R\}\subset \C$ on $\sphere{2n+1}\subset \C^{n+1}$ given by multiplication. For all integer $k\geq 1$, the lens space $L_k^{2n+1}$ is the quotient space $\sphere{2n+1}/\Z_k$ where $\Z_k:=\Z/k\Z\simeq\{e^{\frac{2i\pi m}{k}}\ |\ m\in[0,k-1]\cap\N\}$ is identified with the $k$th roots of unity. It carries a contact distribution $\xi_k$ given by the kernel of the contact form $\alpha_k$ defined as the pullback, by the covering map $\sphere{2n+1}\to L_k^{2n+1}$, of $\alpha_1:=i_{\sphere{2n+1}}^*\left(\sum\limits_{j=1}^{n+1} (x_jdy_j-y_jdx_j)\right)$ where $i_{\sphere{2n+1}} : \sphere{2n+1}\hookrightarrow \R^{2n+2}$ is the inclusion. 


Finally we identify the complex projective space $\CP^n$ with the quotient space $\sphere{2n+1}/\sphere{1}$ and denote by $\pi_1 : \sphere{2n+1}\to\CP^n$ the projection.  We consider on $\CP^n$ the only symplectic form $\omega$ that satisfies $\pi_1^*\omega:=\ud \alpha_1$. Note that $\omega=2\omega_{\mathrm{FS}}$, where $\omega_{\mathrm{FS}}$ denotes the standard Fubini-Study form, and that $\pi_k^*\omega=\ud\alpha_k$, where $\pi_k : L_k^{2n+1}\to\CP^n$ is the natural projection. \\


$M$ being a smooth manifold and $\xi$ a contact distribution on $M$, we say that a subset $U_1\subset M$ cannot be contactly squeezed inside another subset $U_2\subset M$ if for all contactomorphisms $\phi\in\contoc(M,\xi)$ isotopic to the identity through a compactly supported contact isotopy, $\phi(U_1)$ does not lie inside $U_2$, i.e. $\phi(U_1)\cap U_2\ne\phi(U_1)$. 

Denoting by $\mathbb{B}^{2n}(r):=\{((x_1,y_1),\cdots,(x_n,y_n))\in\R^{2n}\ |\ \sum\limits_{j=1}^n x_j^2+y_j^2<r^2\}$ the standard open Euclidean ball of radius $r>0$ in $\R^{2n}$ centered at $0$ and by $\omega_0:=\sum\limits_{j=1}^n dx_j\wedge dy_j$ the standard symplectic form of $\R^{2n}$ we will prove the following:


\begin{thm}\label{thm : non-squeezing optimal}
  Let $\Psi_1 :(\mathbb{B}^{2n}( R_1),\omega_0)\hookrightarrow (\CP^n,\omega)$ and $\Psi_2 :(\mathbb{B}^{2n}(\lambda R_2),\omega_0)\hookrightarrow (\CP^n,\omega)$ be two symplectic embeddings for some $R_1,\ R_2>0$, $\lambda>\sqrt{3}$ and let $a_1\in (0, R_1],\ a_2\in (0,R_2]$ be two positive real numbers. For any integer $k\geq 2$, if there exists $j\in\N_{>0}$ such that
    \[ \pi a_1^2> \frac{2\pi}{k}\cdot j \geq \pi a_2^2,\]  
    then $B^1_k(a_1)$ cannot be contactly squeezed inside $B^2_k(a_2)$, 
    where by definition $B^i_k(a_i):=\pi_k^{-1}(\Psi_i(\mathbb{B}^{2n}(a_i)))\subset (L_k^{2n+1},\xi_k)$ for all $i\in\{1,2\}$.


    
\end{thm}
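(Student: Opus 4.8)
The plan is to build a contact capacity out of the spectral selectors of \cite{allais2024spectral} and to show that this capacity separates the two balls under the stated arithmetic condition. More precisely, I would first recall from \cite{allais2024spectral} that the generating-function / Givental non-linear Maslov index machinery produces, for every compactly supported contactomorphism $\phi$ of $(L_k^{2n+1},\xi_k)$ isotopic to the identity, a finite collection of real spectral numbers $c(\phi)$ that are monotone with respect to the contact partial order, are Lipschitz in a suitable (oscillation or Shelukhin-type) metric, and — crucially — have a controlled behaviour under conjugation and restriction to open subsets. From these selectors one defines, for an open subset $U\subset L_k^{2n+1}$, a number (the ``contact capacity'') $\mathfrak{c}_k(U)$ as a supremum/infimum over contactomorphisms supported in $U$ of the relevant spectral value; the monotonicity axiom gives immediately that $\mathfrak{c}_k(U_1)\le \mathfrak{c}_k(U_2)$ whenever $U_1$ can be squeezed into $U_2$, so the theorem reduces to the inequality $\mathfrak{c}_k(B^1_k(a_1))>\mathfrak{c}_k(B^2_k(a_2))$.

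The computational heart of the argument is then to evaluate, or at least bound from both sides, the capacity of a preimage $B_k(a)=\pi_k^{-1}(\Psi(\mathbb{B}^{2n}(a)))$ of a symplectically embedded ball. Here I would exploit two features. First, the prequantization structure: a Reeb chord of length $\ell$ in $L_k^{2n+1}$ projects to a closed $\omega$-disc of area $\ell$ in $\CP^n$, and the periodic Reeb orbits of the standard structure have period exactly $2\pi/k$, so the relevant spectrum lives in $\frac{2\pi}{k}\Z$ plus the contribution of the embedded ball, which by a Darboux/neighbourhood argument is governed by the radii $a_i$ exactly as in the Euclidean model of \cite{EKP}. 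Second, the fibrewise $\mathbb{Z}_k$-equivariance: because $B_k(a)$ is $\mathbb{Z}_k$-invariant, one can run the equivariant version of the construction and read off the capacity as the first ``jump'' of the index as one pushes a model contactomorphism (a rotation of the fibres over $\Psi(\mathbb{B}^{2n}(a))$) through time. This should yield bounds of the shape: $\pi a^2$ up to the first multiple of $2\pi/k$ from below for the outer (sub)capacity of $B_k(a)$, and a multiple of $2\pi/k$ strictly below $\pi a^2$ for the inner (super)capacity. Matching these with the hypothesis $\pi a_1^2>\frac{2\pi}{k}j\ge \pi a_2^2$ then forces the strict inequality of capacities.

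The role of the constant $\lambda>\sqrt 3$ is to guarantee that the symplectic ball of radius $\lambda R_2$ (rather than just $R_2$) is large enough for a standard neighbourhood of the embedded sphere-bundle to host the auxiliary contactomorphisms and the generating-function construction without running into the cut locus of $\CP^n$ or into capacity losses coming from the non-exactness of $\omega$; concretely it is the factor by which one must inflate a Darboux chart so that the relevant packing/Liouville argument in the proof of the capacity estimate goes through. I would isolate this in a lemma of the form ``if $\mathbb{B}^{2n}(\lambda R)$ embeds symplectically in $(\CP^n,\omega)$ then $B_k(a)$ admits the required model displacement for all $a\le R$'', and keep track of where $\sqrt 3$ enters (presumably from comparing $\pi(\lambda R)^2$ with $2\pi$, the area of a line in $\CP^n$ for $\omega=2\omega_{\mathrm{FS}}$, leaving room for three such discs, i.e. $\lambda^2>3$).

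The main obstacle I anticipate is the precise two-sided computation of the capacity of $B_k(a)$: upper bounds on a spectral capacity (here needed for $B^2_k(a_2)$, to show it is $\le \frac{2\pi}{k}j$) are typically the hard direction, since they require exhibiting, for every relevant $\phi$ supported in $B^2_k(a_2)$, an explicit contactomorphism dominating it whose spectral value is controlled — in the closed prequantization setting this is delicate because one cannot simply escape to infinity as in the Liouville case, and the $2\pi/k$-periodicity of the Reeb flow means the index can ``wrap around''. Managing this wrap-around (via the equivariant lift to $\sphere{2n+1}$ and the $\mathbb{Z}_k$-index, together with the sharp value of the Givental index on rotations) is the crux; once the two estimates are in place, the deduction of non-squeezing from $\pi a_1^2>\frac{2\pi}{k}j\ge \pi a_2^2$ is immediate from monotonicity.
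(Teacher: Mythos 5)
Your overall architecture matches the paper's: build a capacity $C_k(U)=\sup\{c_k(\tphi):\tphi\in\widetilde{\contoc}(U,\xi_k)\}$ from the spectral selector of \cite{allais2024spectral}, bound $C_k$ on lifted Darboux balls from both sides, and deduce non-squeezing from the arithmetic condition on $j$. The two-sided estimate $C_k(B_k(r))=\pi r^2$ and the recognition that the upper bound is the hard direction are both on target.

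However, there is a genuine logical gap at the reduction step. You assert that ``the monotonicity axiom gives immediately that $\mathfrak{c}_k(U_1)\le\mathfrak{c}_k(U_2)$ whenever $U_1$ can be squeezed into $U_2$'', and hence that the theorem reduces to $\mathfrak{c}_k(B^1_k(a_1))>\mathfrak{c}_k(B^2_k(a_2))$. This does not follow from monotonicity under inclusion alone: if $\phi(U_1)\subset U_2$ you only get $C_k(\phi(U_1))\le C_k(U_2)$, and to pass to $C_k(U_1)$ you would need $C_k$ to be invariant under conjugation by contactomorphisms. In this setting that invariance \emph{fails}: Proposition~\ref{prop : spectral selector for L_k^{2n+1}} and Lemma~\ref{lem : prequantization} only give $\lceil c_k(\tpsi\tphi\tpsi^{-1})\rceil_{2\pi/k}=\lceil c_k(\tphi)\rceil_{2\pi/k}$, so the capacity is invariant only at the level of $\tfrac{2\pi}{k}$-ceilings. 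This is not a technicality; it is precisely why the hypothesis demands an intermediate multiple $\tfrac{2\pi}{k}j$ strictly between $\pi a_2^2$ and $\pi a_1^2$. If your reduction were correct, the conclusion would hold whenever $a_1>a_2$ with no arithmetic condition at all, which is false. The actual contradiction argument is: from $\phi(B^1_k(a_1))\subset B^2_k(a_2)$, monotonicity and the upper bound give $C_k(\phi(B^1_k(a_1)))\le\pi a_2^2\le\tfrac{2\pi}{k}j$, hence $\lceil C_k(\phi(B^1_k(a_1)))\rceil_{2\pi/k}\le\tfrac{2\pi}{k}j$; while ceiling-invariance plus the lower bound give $\lceil C_k(\phi(B^1_k(a_1)))\rceil_{2\pi/k}=\lceil C_k(B^1_k(a_1))\rceil_{2\pi/k}\ge\lceil\pi a_1^2\rceil_{2\pi/k}>\tfrac{2\pi}{k}j$.

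As a secondary point, your proposed explanation of the constant $\sqrt{3}$ is off. It has nothing to do with comparing $\pi(\lambda R)^2$ with the area $2\pi$ of a line in $(\CP^n,\omega)$ or packing three discs. It is an elementary Euclidean displacement bound: following Hind's lemma, one displaces $\mathbb{B}^2(r)\subset\R^2$ from itself by a Hamiltonian supported in $\mathbb{B}^2(r\sqrt{2+2\delta})$ with energy $\le r^2(\pi+\delta)$; extending this to $\R^{2n}$ by acting only on the first factor, the orbit of $\mathbb{B}^{2n}(r)$ sweeps out a set contained in $\mathbb{B}^2(r\sqrt{2+2\delta})\times\mathbb{B}^{2(n-1)}(r)\subset\mathbb{B}^{2n}(r\sqrt{3+2\delta})$. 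One needs this to fit inside the embedded $\mathbb{B}^{2n}(\lambda R)$ (so the cut-off Hamiltonian is compactly supported there), hence $\lambda>\sqrt{3}$. The upper bound is then obtained via the capacity-energy inequality (Theorem~\ref{thm : capacite-energie}), not by directly dominating every $\phi$ supported in the ball by an explicit generating-function construction as you suggest.
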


\begin{rem}\label{rem : introduction}
    \begin{enumerate}
   \item It might be that Theorem \ref{thm : non-squeezing optimal} remains true for some $\lambda\leq\sqrt{3}$.  However, for the purpose of our proof, $\lambda$ has to be strictly greater than $1$ since, as we will see in Subsection \ref{sec : computation upper bound}, we need to displace $\mathbb{B}^{2n}(a_2)$ from itself inside $\mathbb{B}^{2n}(\lambda R_2)$. 
   

        \item These non-squeezing phenomena do not happen if we forget about the contact structure. Indeed it is easy to see that if $R_1=\lambda R_2=:R$ and $\Psi_1=\Psi_2=:\Psi$ then  for all $r_1,\ r_2\in(0,R]$, there exists a diffeomorphism of $L_k^{2n+1}$ which is isotopic to the the identity - through diffeomorphisms - and that sends $B_k(r_1)$ inside $B_k(r_2)$ where $B_k(r):=\pi_k^{-1}(\Psi(\mathbb{B}^{2n}(r)))$ for all $r\in (0,R]$.
        \item In \cite[Thm 1 and Prop 2.11]{Karshon_2005}\footnote{The conventions used in \cite{Karshon_2005} are different: they consider the symplectic form $\widetilde{\omega}:=\frac{1}{2\pi}{\omega}$ and show that there exists an embedding of $(\widetilde{\mathbb{B}^{2n}}(1),\omega_0)\hookrightarrow (\CP^n,\widetilde{\omega})$, where $\widetilde{\mathbb{B}^{2n}}(a):=\mathbb{B}^{2n}(\sqrt{\frac{a}{\pi}})$}  Karshon and Tolman construct a symplectic embedding $\Psi :(\mathbb{B}^{2n}(R),\omega_0)\hookrightarrow (\CP^n,\omega)$, for $R=\sqrt{2}$, and show that it is optimal in the sense that such symplectic embeddings do not exist for $R>\sqrt{2}$ (see also \cite[Thm 1]{Loi_2015}). Considering this optimal embedding, one can see that non-squeezing phenomena appear for all lens spaces, even for the real projective space $(\RP^{2n+1},\xi):=(L_2^{2n+1},\xi_2)$, since 
$\pi \left(\sqrt{2}\right)^2>\frac{2\pi}{2}$.

 \end{enumerate}
\end{rem}

We will prove Theorem \ref{thm : non-squeezing optimal} by constructing a contact capacity defined on open subsets of $(L_k^{2n+1},\xi_k)$ and by estimating this contact capacity for the lift of Darboux balls. This contact capacity comes from the spectral selectors defined on $\widetilde{\conto}(L_k^{2n+1},\xi_k)$ in \cite{allais2024spectral} by the means of generating functions and Givental's non-linear Maslov index (see Section \ref{sec : spectral selectors for lens spaces}).

\subsection{Non-squeezing in strongly orderable prequantizations}\label{sec : intro ns prequantization}
Recall that if $(M,\xi=\ker\alpha)$ is a closed cooriented contact manifold then $(M\times M\times\R,\Xi:=\ker(\beta:=\mathrm{pr}_2^*\alpha-e^\theta\mathrm{pr}_1^*\alpha))$, where $\mathrm{pr}_i :M\times M\times\R\to M,\ (x_1,x_2,\theta)\mapsto x_i$, is a cooriented contact manifold and the diagonal $\Delta:=\{(x,x,0)\ |\ x\in M\}$ is a Legendrian lying inside it. We will call the latter manifold the contact product of $(M,\xi)$\footnote{the contact structure on the contact product of $(M,\xi)$ does not depend on the choice of a contact form defining the contact distribution $\xi$ up to isomorphism} with itself and will call the $1$-form $\beta$ the product contact form associated to $\alpha$.   Using the terminology of \cite{Li2021,CCR} we say that  $(M,\ker\alpha)$ is strongly orderable if there does not exist any positive contractible loop of Legendrians based at $\Delta$. In particular, if $(M,\ker\alpha)$ is strongly orderable then it is orderable in the sense of \cite{EP00}, i.e.  there does not exist any positive contractible loop of contactomorphisms in $\conto(M,\ker\alpha)$.  See Sections \ref{sec : orderability and periodic reeb flow} and \ref{sec : strongly orderability}  for more details about these notions. Colin-Chantraine-Dimitroglou Rizell show in \cite[Thm 1.19]{CCR} that if $(M,\xi)$ is a closed hypertight contact manifold, i.e. there exists a contact form $\alpha$ supporting $\xi$ whose Reeb flow does not have any contractible Reeb orbit, then $(M,\xi)$ is strongly orderable. They also show that any closed contact manifold admitting a Liouville filling whose symplectic homology does not vanish is strongly orderable \cite[Thm 1.18]{CCR}.  \\

We say that a closed contact manifold $(M_1,\xi_1')$ is a prequantization over a closed  symplectic manifold $(W,\omega')$ if there exists a $\mathbb{T}^1$-principal bundle $\pi_1' : M_1\to W$, where $\mathbb{T}^1=\R/2\pi\Z$, and a contact form $\alpha_1'$ supporting $\xi_1'$ satisfying ${\pi_1'}^*\omega'=\ud \alpha'_1$ and the Reeb flow of which, is Zoll\footnote{all its orbits are periodic with the same minimal period} of minimal period $2\pi$, induces the $\mathbb{T}^1$-action. 

Suppose that $(M_1,\xi_1')$ is a closed prequantization over $(W,\omega')$, then for any $k\in\N_{>0}$ we denote by $M_k:=M_1/\Z_k$, where $\Z_k$ is seen as a subgroup of $\mathbb{T}^1$ via the map $[j]\mapsto \frac{2\pi j}{k}$, and $\xi'_k$ the contact distribution on $M_k$ given by the kernel of the unique $1$-form $\alpha'_k$ whose pull back via the covering $M_1\to M_1/\Z_k$ is $\alpha'_1$. Finally we denote by $\pi'_k : M_k\to W$ the natural projection. Note that ${\pi'_k}^*\omega'=\ud\alpha'_k$ and that $(M_k,\xi'_k)$ is a prequantization over $(W,k\omega')$ by taking $k\alpha'_k$ to be the contact form inducing the Zoll Reeb flow of minimal period $2\pi$.


\begin{thm}\label{thm : ns for prequantization} Suppose that $(M_1,\xi'_1)$ is a strongly orderable prequantization over a closed symplectic manifold $(W,\omega')$  and
let $\Psi_1 :(\mathbb{B}^{2n}( R_1),\omega_0)\hookrightarrow (W,\omega')$, $\Psi_2 :(\mathbb{B}^{2n}(\lambda R_2),\omega_0)\hookrightarrow (W,\omega')$ be two symplectic embeddings for some $R_1\in(0,\sqrt{2}]$, $R_2>0$, $\lambda>\sqrt{3}$, and $a_1\in (0, R_1],\ a_2\in (0,R_2]$ be two positive real numbers. For any $k\in\N_{>0}$, if there exists $j\in\N_{>0}$ such that
    \[ \pi a_1^2> \frac{2\pi}{k}\cdot j \geq \pi a_2^2,\]  
    then $B^1_k(a_1)$ cannot be contactly squeezed inside $B^2_k(a_2)$, 
    where by definition $B^i_k(a_i):={\pi'_k}^{-1}(\Psi_i(\mathbb{B}^{2n}(a_i)))\subset (M_k,\xi'_k)$ for all $i\in\{1,2\}$. 
\end{thm}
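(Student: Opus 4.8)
The plan is to assign to each open subset of $(M_k,\xi'_k)$ a contact capacity built from the order spectral selectors of \cite{allais2023spectral}, and then to pit a lower bound for the lift of one Darboux ball against an upper bound for the lift of the other, using monotonicity of the capacity. Since $(M_1,\xi'_1)$ is strongly orderable, so is $(M_k,\xi'_k)$: a positive contractible loop of Legendrians based at the diagonal of the contact product of $M_k$ lifts, through the $k$-fold covering $M_1\to M_k$, to a positive contractible loop based at the diagonal of the contact product of $M_1$. Moreover, by construction the Reeb flow of $\alpha'_k$ is $\tfrac{2\pi}{k}$-periodic, so its time-$\tfrac{2\pi j}{k}$ map is the identity on $M_k$ and lifts to a central element $\widetilde{R}^{2\pi j/k}$ of $\tconto(M_k,\xi'_k)$. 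Writing $\cleq$ for the partial order on $\tconto(M_k,\xi'_k)$ associated to non-negative contact Hamiltonians, I set, for an open $U\subseteq M_k$,
$$\ell(U):=\frac{2\pi}{k}\,\sup\Bigl\{\,j\in\N\ :\ \text{there exists }\tilde\psi\in\tconto(M_k,\xi'_k)\text{ generated by a contact isotopy supported in }U\text{ with }\widetilde{R}^{2\pi j/k}\cleq\tilde\psi\,\Bigr\}.$$
The features of $\ell$ I shall use are: it takes values in $\tfrac{2\pi}{k}\N\cup\{+\infty\}$; it is monotone ($U\subseteq V\Rightarrow\ell(U)\leq\ell(V)$); and it is invariant under $\contoc(M_k,\xi'_k)$, because conjugating an isotopy supported in $U$ by (a lift of) $\phi\in\contoc$ gives one supported in $\phi(U)$ while preserving the cone of non-negative contactomorphisms — hence the order $\cleq$ — and fixing each central $\widetilde{R}^{2\pi j/k}$; thus $\ell(\phi(U))=\ell(U)$. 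Strong orderability is exactly what guarantees that $\ell$ is finite on displaceable sets, which is what will drive the upper bound. Setting these axioms up is a routine adaptation of \cite{allais2023spectral} to the finite quotient $M_k$.

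Granting this, the theorem reduces to two estimates. Indeed, if some $\phi\in\contoc(M_k,\xi'_k)$ satisfied $\phi(B^1_k(a_1))\subseteq B^2_k(a_2)$, then $\ell(B^1_k(a_1))=\ell(\phi(B^1_k(a_1)))\leq\ell(B^2_k(a_2))$, so it is enough to prove that, under the hypothesis $\pi a_1^2>\tfrac{2\pi}{k}j\geq\pi a_2^2$,
$$\ell(B^1_k(a_1))\geq\frac{2\pi}{k}\,j\qquad\text{and}\qquad \ell(B^2_k(a_2))<\frac{2\pi}{k}\,j,$$
the two being in contradiction. (Note that the second inequality is equivalent to $\ell(B^2_k(a_2))\leq\tfrac{2\pi}{k}(j-1)$ since $\ell$ is $\tfrac{2\pi}{k}\N$-valued.)

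For the lower bound I would carry the model computation of \cite{allais2023spectral} — a prequantized avatar of the Eliashberg--Kim--Polterovich rotation trick, already run for lens spaces and for prequantizations of balls — through the symplectic chart $\Psi_1$. Concretely, pick a non-increasing $h\colon[0,\infty)\to\R$ supported in $[0,\pi a_1^2)$ with $h(0)$ large, and consider the Hamiltonian on $(W,\omega')$ equal to $x\mapsto h\bigl(\pi|\Psi_1^{-1}(x)|^2\bigr)$ on $\Psi_1(\mathbb{B}^{2n}(a_1))$ and to $0$ elsewhere; its time-$1$ flow performs a full rotation on each sub-ball $\Psi_1(\mathbb{B}^{2n}(\rho))$ with $\rho<a_1$. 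Lifting this isotopy via $\pi'_k$ produces $\tilde\psi\in\tconto(M_k,\xi'_k)$ supported in $B^1_k(a_1)$ whose prequantized loop accumulates a Reeb shift measured by the swept symplectic area, and $\pi a_1^2>\tfrac{2\pi}{k}j$ is exactly what lets one arrange $\widetilde{R}^{2\pi j/k}\cleq\tilde\psi$, giving $\ell(B^1_k(a_1))\geq\tfrac{2\pi}{k}j$. The hypothesis $R_1\leq\sqrt2$ is used here in an essential way: it is the Karshon--Tolman threshold (Remark \ref{rem : introduction}(3)) ensuring that the Darboux ball $\mathbb{B}^{2n}(R_1)$ also embeds symplectically into $(\CP^n,\omega)$, so that the selector of such a prequantized rotation may be computed on the concrete model prequantization of $\CP^n$ and is not corrupted by higher windings — one stays within $\pi a_1^2\leq2\pi$. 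Making this last point rigorous — promoting the heuristic ``the prequantized full rotation of a Darboux ball of area $>\tfrac{2\pi}{k}j$ dominates $\widetilde{R}^{2\pi j/k}$'' into an honest inequality for the order spectral selector, stable under transport through an arbitrary $\Psi_1$ into an arbitrary $(W,\omega')$ — is the step I expect to be the main obstacle; it is also where genuine contact rigidity enters, as opposed to the soft flexibility recorded in Remark \ref{rem : introduction}(2).

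For the upper bound I would use an energy--capacity inequality: if $V\subseteq M_k$ can be displaced from itself by a contact isotopy supported in some $V'$, then $\ell(V)$ is bounded above by the size of that displacement, essentially because an $\tilde\psi$ supported in $V$ together with a conjugate of it by the displacement generate, in $\tconto(M_k,\xi'_k)$, an element dominating $\widetilde{R}^{2s}$ whenever $\tilde\psi$ dominated $\widetilde{R}^{s}$, and strong orderability forbids arbitrarily large such $s$ for isotopies confined to $V'$. Apply this to $V=B^2_k(a_2)$: since $a_2\leq R_2$ and $\lambda>\sqrt3$, the ball $\mathbb{B}^{2n}(a_2)$ can be displaced from itself inside $\mathbb{B}^{2n}(\lambda R_2)$ by an explicit compactly supported Hamiltonian isotopy of $(W,\omega')$ carried by the chart $\Psi_2$ — this is precisely what forces $\lambda$ to exceed $\sqrt3$ (see Subsection \ref{sec : computation upper bound}) — and, since the open ball $B^2_k(a_2)$ actually lies in some $B^2_k(a_2-\varepsilon)$ whose prequantized displacement sweeps strictly less than $\pi a_2^2$ of Reeb shift, one gets $\ell(B^2_k(a_2))<\tfrac{2\pi}{k}j$ whenever $\tfrac{2\pi}{k}j\geq\pi a_2^2$. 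Together with the lower bound this completes the argument; the upper bound and the capacity axioms are variations on well-understood techniques, whereas the lower bound is the crux.
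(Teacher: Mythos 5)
Your overall strategy — build a monotone, conjugation-invariant contact capacity from an order selector, then compare a lower bound for $B_k^1(a_1)$ against an upper bound for $B_k^2(a_2)$ — matches the paper. But the specific capacity you set up is \emph{not} what the paper uses, and the crucial lower bound for it is a genuine gap, which you yourself flag as ``the main obstacle.'' Your $\ell(U)$ asks for a $\tilde\psi$ \emph{supported in $U$} dominating the Reeb shift in the partial order, $\widetilde{R}^{2\pi j/k}\cleq\tilde\psi$. For a compactly supported $\tilde\psi$ generated by a non-negative contact Hamiltonian $h$ vanishing off $U$, the product $\tilde\psi\cdot(\widetilde{R}^{2\pi j/k})^{-1}$ is generated (after concatenation) by a Hamiltonian that is strictly \emph{negative} outside $U$; establishing that this class nonetheless admits a non-negative representative is exactly what the partial order cannot see for free, and there is good reason to think it is simply false (in the $\R^{2n}\times S^1$ model, a degree count on the $S^1$-projection of the loop traced by a point outside $U$ obstructs it). The paper never proves such a domination. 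Instead, Section \ref{sec : spectral selectors for strongly orderable prequantization} defines $C^{\alpha_k}_+(\tilde\phi)=\ell_+^{\beta}(\tilde\Phi\cdot\Delta,\Delta)$ via the \emph{Legendrian} order spectral selector on $\uLeg(\Delta)$ in the contact product, and the lower bound (Proposition \ref{prop : lower bound for strongly orderable prequantization}, mirroring Proposition \ref{prop : lower bounds}) is obtained by computing the \emph{spectrum} of the prequantized rotation explicitly, then pinning down the selector's value by continuity in $T$, spectrality, $C^{\alpha_k}_+(\id)=0$, and non-degeneracy ($\id\cleq\tilde\phi$, $\tilde\phi\ne\id$ forces $>0$). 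This spectral pinning is a strictly weaker input than your dominance claim, and it is what actually works.

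Two further inaccuracies. First, the hypothesis $R_1\le\sqrt2$ has nothing to do with re-embedding the Darboux ball into $\CP^n$; it is there to ensure $\pi R_1^2\le2\pi$, the minimal period of the $\alpha'_1$-Reeb flow, so that in Step 4 of the lower-bound proof the constant $\delta=\min\{1,\tfrac{2\pi}{\pi r^2-\varepsilon}\}$ equals $1$ and the selector computation extends to the whole time interval $[0,1]$; in Theorem \ref{thm : non-squeezing optimal} this is automatic by Karshon--Tolman, which is why the hypothesis appears only in Theorem \ref{thm : ns for prequantization}. Second, on a positive note, your invariance argument (using centrality of the Reeb shift in the universal cover and bi-invariance of $\cleq$) is actually cleaner than the paper's route through Lemma \ref{lem : prequantization}, and your upper-bound sketch via iterated conjugation by a displacement is sound in spirit — these would be worth keeping if you switched the definition of the capacity to the one the paper uses.
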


As for Theorem \ref{thm : non-squeezing optimal} we prove Theorem \ref{thm : ns for prequantization} by constructing a contact capacity defined on open subsets of $(M_k,\xi'_k)$ and by estimating this contact capacity for the lift of Darboux balls. The contact capacity this time will come from the order spectral selectors defined in \cite{allais2024spectral} on the universal cover of the isotopy class of the Legendrian $\Delta_k$ lying in the contact product of $(M_k,\xi'_k)$ with itself.

\begin{rem}\label{rem : deuxieme rem intro}\
\begin{enumerate}

\item In Theorem \ref{thm : ns for prequantization} we make the assumption that $R_1\leq\sqrt{2}$ so that $\pi R_1^2$ is not greater than the minimal period of the Reeb flow associated to the contact form $\alpha'_1$ satisfying ${\pi'_1}^*\omega'=\ud \alpha'_1$ (see the 4th step of the proof of Proposition \ref{prop : lower bound for strongly orderable prequantization} for more details). This technical assumption was not made in Theorem \ref{thm : non-squeezing optimal} because,  by Theorem \cite[Thm 1]{Karshon_2005} discussed in Remark \ref{rem : introduction}, it is automatically satisfied. In particular, when $(M_1,\xi'_1)$ is a strongly orderable prequantization, one can expect to detect non-squeezing phenomena in $(M_k,\xi'_k)$ thanks to Theorem \ref{thm : ns for prequantization} only for $k>1$.
\item Even if the lens space $(L_k^{2n+1},\xi_k)$ defined in Section \ref{sec : intro ns lens space} can be seen as a prequantization over $(\CP^n,k\omega=2k\omega_{FS})$, one cannot deduce Theorem \ref{thm : non-squeezing optimal} from Theorem \ref{thm : ns for prequantization}. First, it is not clear to the author whether contact lens spaces are strongly orderable or not in general. Indeed $(L_k^{2n+1},\xi_k)$, for $ n>1$, is not hypertight since it satisfies the Weinstein conjecture (see \cite{granja2014givental}), i.e. the Reeb flow of any contact form $\alpha$ satisfying $\ker\alpha=\xi_k$ admits a periodic orbit, and its fundamental group $\pi_1(L_k^{2n+1})$ is finite. Moreover, if $n\in\N_{>0}$ is not a power of $2$, Zhou \cite{Zhou_2021} showed that $(L_2^{2n-1},\xi_{2})$ is not Liouville fillable. Second, for the reasons discussed in the first point of this Remark \ref{rem : deuxieme rem intro} and the fact that $(L_1^{2n+1},\xi_1)=(\sphere{2n+1},\xi_1)$ is not strongly orderable for $n\geq 1$ (see \cite{EKP}), Theorem \ref{thm : ns for prequantization} cannot detect non-squeezing phenomena in $(L_p^{2n+1},\xi_p)$ when $p$ is a prime number  even when it is known that the latter is strongly orderable. In particular, $(L_2^3,\xi_2)$ is strongly orderable, since the standard symplectic codisk bundle of the $2$-sphere is a Liouville filling whose symplectic homology does not vanish (see Example \ref{exemple} below), but from what we have just said Theorem \ref{thm : ns for prequantization} cannot detect non-squeezing phenomena in the latter contact manifold while Theorem \ref{thm : non-squeezing optimal} does (see Remark \ref{rem : introduction}).


\item We conjecture that Theorem \ref{thm : ns for prequantization} also holds if we assume $(M_1,\xi'_1)$ to be orderable in the sense of \cite{EP00}, i.e. there does not exist any positive contractible loop of contactomorphisms, which is \textit{a priori} weaker than to assume $(M_1,\xi'_1)$ to be strongly orderable. The validity of this conjecture would be an easy consequence of the proof given below in Section \ref{sec : Contact capacity for strongly orderable prequantizations} together with the validity of the conjecture made in \cite{allais2023spectral} about the spectrality of the (pseudo-)spectral selectors for contactomorphisms.
\item It seems  very plausible that with the tools developped in \cite{cant2024remarkseternalclassessymplectic} one could derive similar non-squeezing phenomena satisfied by the ideal contact boundary of a semipositive and convex-at-infinity symplectic manifold $W$ whose unit in symplectic homology is not eternal.
\end{enumerate}
\end{rem}

\begin{exs}\label{exemple}\
\begin{enumerate}

\item For any closed manifold $X$, its associated cosphere bundle with its standard contact structure is strongly orderable by \cite[Thm. 1.18]{CCR}, since it is Liouville fillable by the standard symplectic codisk bundle and the symplectic homology of the latter does not vanish (see for instance the introduction of \cite{CCR} or \cite[Example 1.6]{albers} combined with \cite[Thm. 13.3]{Ritter_2013}). If we assume moreover that $X$ admits a Riemaniann metric $g$ so that all its geodesics are closed with length $2\pi$ (see \cite[Chap. 3.4]{Marco} for examples) then the cosphere bundle is, in addition to be strongly orderable, also a prequantization (see \cite[Thm. 7.2.5]{geiges}).

\item If $(W,\omega')$ is a closed symplectic manifold so that $\left<[\omega'],H_2(W,\Z)\right>\subset 2\pi\Z$ and $\pi'_1 : M_1\to W$ denotes the $\R/2\pi\Z$-principal bundle whose Euler class, after taking its image by the map $H^2(W,\Z)\to H^2(W,\R)\simeq H^2_{\mathrm{dR}}(W,\R)$, is equal to $-[\frac{\omega'}{2\pi}]$, then $M_1$ carries a contact form $\alpha'_1$ that turns $(M_1,\ker\alpha'_1)$ into a prequantization over $(W,\omega')$, i.e. ${\pi_1'}^*\omega'=d\alpha_1'$, the Reeb flow of $\alpha_1'$ is Zoll of minimal period $2\pi$ and it induces the $\R/2\pi\Z$ action (see for instance \cite[Chap 7.2]{geiges}). If we suppose in addition that $(W,\omega')$ is symplectically aspherical, i.e. $\left<[\omega'], H_2^S(M,\Z)\right>=0$ where $H_2^S(M,\Z)$ is the image of the Hurewitz homomorphism $\pi_2(M)\to H_2(M,\Z)$, then $(M_1,\xi'_1)$ is hypertight and in particular is strongly orderable thanks to \cite[Thm. 1.19]{CCR}. Therefore we get non-squeezing phenomena thanks to Theorem \ref{thm : ns for prequantization} in $(M_k,\xi'_k)$ for $k>1$ big enough.

\end{enumerate}

\end{exs}

\subsection{Discussion} It is interesting to note that the $\alpha_k$, and $\alpha'_k$ Reeb flows on $(L_k^{2n+1},\xi_k)$ and on $(M_k,\xi'_k)$, as described above in Sections \ref{sec : intro ns lens space} and \ref{sec : intro ns prequantization}, are periodic with period $\frac{2\pi}{k}$ for all integers $k\geq 1$. Indeed, the  periodicity of the Reeb flow often plays a crucial role in the existence of non trivial global invariants and this article is no exception to this fact (see Lemma \ref{lem : prequantization}). Furthermore, the global invariants one can deduce from Theorem \ref{thm : non-squeezing optimal} and Theorem \ref{thm : ns for prequantization} naturally take value in the discrete set consisting in multiples of this period. Similarly, in the contact manifold $\left(\R^{2n}\times\R/\Z,\ker(\alpha_0:=\ud z-\frac{1}{2}\sum\limits_{i=1}^n (x_idy_i-y_idx_i))\right)$ where the first contact non-squeezing phenomena was described by Eliashberg, Kim and Polterovich in \cite{EKP}, the  $\alpha_0$-Reeb flow is $1$-periodic, and they show that it is impossible to contactly squeeze $\mathbb{B}^{2n}(A)\times\R/\Z$ inside $\mathbb{B}^{2n}(a)\times\R/\Z$ if there exists an integer $j\in\N_{>0}$ so that $A>j\geq a$.

The same kind of phenomenon can be observed when studying bi-invariant metrics on the group $\contoc(M,\xi)$ or its universal cover $\widetilde{\contoc}(M,\xi)$. While Sandon in \cite{sandonmetrique} constructs an unbounded bi-invariant metric on $\contoc(\R^{2n}\times\R/\Z,\ker\alpha_0)$ taking integer values, the authors in \cite{allais2024spectral} construct an unbounded bi-invariant metric on $\widetilde{\conto}(L_k^{2n+1},\xi_k)$ taking values in $\frac{2\pi}{k}\N$\footnote{It is shown in \cite{FPR} that, for any contact manifold $(M,\xi)$, a bi-invariant metric on $\contoc(M,\xi)$ (resp. $\widetilde{\contoc}(M,\xi)$) has to induce the discrete topology on $\contoc(M,\xi)$ (resp. on $\widetilde{\contoc}(M,\xi)\setminus\pi_1(\contoc(M,\xi))$)}.\\



What we have just described should be nuanced: beeing a contact manifold admitting a periodic Reeb flow is not a sufficient condition to have rigidity results and existence of global invariants. In fact, the $\alpha_1$-Reeb flow of the standard contact sphere $(\sphere{2n+1},\ker\alpha_1)$ is periodic but any two non-empty open domains that are not the whole sphere can be contactly squeezed inside each other. In the same way, it is shown in \cite{FPR} that, for $n\in\N_{>0}$, the groups $\conto(\sphere{2n+1},\ker\alpha_1)$ and $\widetilde{\conto}(\sphere{2n+1},\ker\alpha_1)$ are bounded, i.e. any bi-invariant metric on these groups has to be bounded. 

It seems that another important property that a contact manifold should have so that one could expect meaningful invariant measurements is to be orderable (see \ref{sec : orderability and periodic reeb flow}, \ref{sec : strongly orderability} for definitions). For $n>1$ and $k\geq 2$, the sphere $(\sphere{2n+1},\ker\alpha_1)$ and the lens space $(L_k^{2n+1},\ker\alpha_k)$ are perfect representative cases since the sphere is not orderable \cite{EKP} while the lens space is \cite{granja2014givental}, and, as we have just mentionned, interesting contact measurements exist for the latter but not for the former.
In the same direction, a similar observation can be made for bi-invariant metrics on contactomorphisms groups. On the one hand, to the knowledge of the author, the only examples in the litterature of closed contact manifolds $(M,\xi)$ admitting unbounded bi-invariant metrics on $\widetilde{\conto}(M,\xi)$ are the contact manifolds $(M,\xi)$ that are orderable and that admit a periodic Reeb flow \cite{FPR,discriminante,granja2014givental,allais2024spectral}. On the other hand, for a large family of closed non-orderable contact manifolds $(M,\xi)$ Courte and Massot show that $\conto(M,\xi)$ and $\widetilde{\conto}(M,\xi)$ have to be bounded \cite{courtemassot}. \\


 Actually, while convenient but not sufficient, the existence of a periodic Reeb flow is not even a necessary condition to have contact global invariants. For example, the unitary cotangent of the torus $\mathbb{P}^+T^*\T^n$ endowed with its canonical contact structure $\xi_{\mathrm{can}}$ does not admit any periodic Reeb flow when $n>1$ (see for example \cite{chernovnemirovski1}). But, as briefly mentionned above, one can use an interesting contact global invariant for domains in $(\mathbb{P}^+T^*\T^n,\xi_{\mathrm{can}})$, which is called the \textit{shape}\footnote{unlike the other invariant measurements discussed so far, the \textit{shape} can be used to construct “non-discrete” measurements}, to deduce non-squeezing phenomena and other rigidity results concerning this contact manifold \cite{shapeeliashberg,CantNS,müller2013gromovs,EP00}. It would therefore be interesting to investigate whether the groups $\widetilde{\conto}(\mathbb{P}^+T^*\T^n,\xi_{\mathrm{can}})$ and $\ \conto(\mathbb{P}^+T^*\T^n,\xi_{\mathrm{can}})$ are bounded or not (see also \cite[Prop. 4.14, Rem. 4.15]{allais2023spectral}).

\subsection*{Organization of the paper}   In Section \ref{sec : spectral selectors} we set some conventions and give basic definitions and properties of orderability and spectral selectors. In Section \ref{sec : spectral selectors for lens spaces} we briefly recall the construction and main properties of the spectral selectors on $\widetilde{\conto}(L_k^{2n+1},\xi_k)$ constructed in \cite{allais2024spectral}. In Section \ref{sec : Contact capacity} we define the above mentionned contact capacity for open domains of $(L_k^{2n+1},\xi_k)$ coming from the spectral selectors. In this same Section \ref{sec : Contact capacity} we state - without prooving - some properties of this contact capacity (see Theorem \ref{thm : contact capacity}) which allow us to give the proof of Theorem \ref{thm : non-squeezing optimal}. In Section \ref{sec : capacite-energy} we show a capacity-energy inequality satisfied by the contact capacity. In Section \ref{sec : computation of the capacity} we prove Theorem \ref{thm : contact capacity} by estimating and computing the contact capacity of domains that are lifts of Darboux balls to $(L_k^{2n+1},\xi_k)$ using the capacity-energy inequality of Section \ref{sec : capacite-energy} and some explicit constructions. In Section \ref{sec : spectral selectors for strongly orderable prequantization} we define the spectral selectors on $\widetilde{\conto}(M,\xi)$ when $(M,\xi)$ is a strongly orderable closed contact manifold. Finally in Section \ref{sec : Contact capacity for strongly orderable prequantizations} we use these spectral selectors to define a contact capacity for domains in strongly orderable prequantizations and prove Theorem \ref{thm : ns for prequantization}.

\subsection*{Acknowledgment} 
The author thanks Alberto Abbondandolo, Johanna Bimmermann, Stefan Nemirovski and Sheila Sandon for  interesting and stimulating discussions.

The author was
partially supported by the \emph{Deutsche Forschungsgemeinschaft} under the
\emph{Collaborative Research Center SFB/TRR 191 - 281071066 (Symplectic
Structures in Geometry, Algebra and Dynamics)}.

\section{Spectral Selectors}\label{sec : spectral selectors}

\subsection{Conventions and definition} Let $(M,\xi)$ be a closed coorientable contact manifold. We say that a $[0,1]$-family of contactomorphisms $\{\phi_t\}_{t\in[0,1]}\subset\cont(M,\xi)$, that we abbreviate with $(\phi_t)$, is a contact isotopy if the map $[0,1]\times M\to M$, $x\mapsto \phi_t(x)$ is smooth. We denote by $\conto(M,\xi)$ the group of contactomorphisms $\phi$ that are contact isotopic to the identity: there exists a contact isotopy $(\phi_t)$ such that $\phi_0=\id$ and $\phi_1=\phi$. We endow $\conto(M,\xi)$ with the $C^1$-topology, denote by $\widetilde{\conto}(M,\xi)$ its universal cover and $\Pi :\widetilde{\conto}(M,\xi)\to\conto(M,\xi)$ the projection. We often see $\widetilde{\conto}(M,\xi)$ as equivalence classes of contact isotopies starting at the identity where two isotopies represent the same equivalence class if they are homotopic relatively to endpoints. 

A point $x\in M$ is said to be a discriminant of a contactomorphism $\phi\in\conto(M,\xi)$ if $\phi(x)=x$ and $(\phi^*\alpha)_x=\alpha_x$, for some - and thus all - contact form $\alpha$ supporting $\xi$, i.e. $\ker\alpha=\xi$. 

Let $\alpha$ be a contact form supporting $\xi$, i.e. $\ker\alpha=\xi$. We say that a point $x\in M$ is an $\alpha$-translated point of $\phi\in\conto(M,\xi)$ if there exists $T\in\R$ so that $x$ is a discriminant point of $\phi_{\alpha}^{-T}\circ\phi$, where $\phi_\alpha^s$ denotes the Reeb flow at time $s\in\R$ of $\alpha$, i.e. the flow of the vector field $R_\alpha$ uniquely defined by the two equations $\alpha(R_\alpha)\equiv 1$ and $\ud\alpha(R_\alpha,\cdot)\equiv 0$. For any $\phi\in\conto(M,\xi)$ we denote and define its $\alpha$-spectrum by \[\spec^\alpha(\phi):=\left\{t\in\R\ |\ \phi_\alpha^{-t}\circ \phi \text{ has a discriminant point}\right\}.\] 
Similarly, the $\alpha$-spectrum $\spec^\alpha(\tphi)$ of any
$\tphi\in\widetilde{\conto}(M,\xi)$ 
is by definition
\[\spec^\alpha(\tphi):=\spec^\alpha\left(\Pi(\tphi)\right).\]


\begin{definition}\label{def : spectral selector}
    We say that a map $c: \widetilde{\conto}(M,\ker\alpha)\to\R$ is a $\alpha$-spectral selector if it is $C^1$-continuous and if $c(\tphi)\in\spec^\alpha(\tphi)$.
\end{definition}

\subsection{Orderability, Periodic Reeb flow and invariance of spectral selectors}\label{sec : orderability and periodic reeb flow} Let $(M,\xi)$ be a closed coorientable contact manifold. From now on we fix a coorientation of $\xi$, i.e. a vector field $X\in \chi(M)$ so that $X(M)\subset TM\setminus\xi$.  We say that a contact form $\alpha$ supports $\xi$ if $\ker\alpha=\xi$ and $\alpha(X)>0$.

A contact isotopy $(\phi_t)$ is said to be non-negative (resp. positive) if for some, and thus all, contact form $\alpha$ supporting $\xi$ we have $\alpha\left(\frac{d}{dt}\phi_t(x)\right)\geq 0$ (resp. $>0$) for all $x\in M$. Following \cite{EP00} we define a binary relation on $\widetilde{\conto}(M,\xi)$ by saying that $\tphi\cleq\tpsi$ if there exists a path $(\tphi_t)$ joining them such that $(\Pi(\tphi_t))$ is a non-negative contact isotopy. It is shown in \cite{EP00} that this binary relation is a partial order if and only there does not exist any positive contractible loop of contactomorphisms, and in this case we say that $(M,\xi)$ is orderable. The following Lemma will be crucial in this paper and was, to some extent, already noticed by Sandon in \cite{San11}

\begin{lem}\label{lem : prequantization}
    Let $(M,\xi)$ be a orderable closed contact manifold and $\alpha$ be a contact form supporting $\xi$ whose Reeb flow is periodic of minimal period $T>0$. If $c : \widetilde{\conto}(M,\xi)\to\R$ is a $\alpha$-spectral selector which is monotone, i.e. $\tphi_1\cleq\tphi_2$ implies that $c(\tphi_1)\leq c(\tphi_2)$, then 
    \[\lceil c(\tpsi\cdot\tphi\cdot\tpsi^{-1})\rceil_{T}=\lceil c(\tphi)\rceil_T\quad \quad \text{ for all }\tphi,\ \tpsi\in\widetilde{\conto}(M,\xi).\]
\end{lem}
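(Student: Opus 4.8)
The key notation: for $T>0$ and $s\in\R$, write $\lceil s\rceil_T := T\lceil s/T\rceil = \min\{mT : m\in\Z,\ mT\ge s\}$, the smallest integer multiple of $T$ that is $\ge s$. The plan is to exploit three facts: (i) conjugation by a fixed element is order-preserving for $\cleq$; (ii) the time-$T$ Reeb flow $\widetilde{\phi_\alpha^T}$ (the lift of the periodic flow, traversed once, which is central in $\widetilde{\conto}(M,\xi)$ since the Reeb flow here is periodic and its time-$T$ map is the identity) acts on the spectrum by translation by $T$, i.e. $\spec^\alpha(\widetilde{\phi_\alpha^T}\cdot\tphi)=T+\spec^\alpha(\tphi)$ and likewise $c(\widetilde{\phi_\alpha^T}\cdot\tphi)=T+c(\tphi)$ if $c$ is a spectral selector — actually I only need the monotone-squeezing consequence; and (iii) $\tphi\cleq\widetilde{\phi_\alpha^{T}}^{\,m}$ whenever $mT\ge\max$ "displacement", and more precisely one can sandwich any $\tphi$ between two integer powers of the Reeb element.

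First I would record the central-element lemma: because the $\alpha$-Reeb flow has minimal period $T$, the contact isotopy $t\mapsto\phi_\alpha^{tT}$, $t\in[0,1]$, is a loop at the identity, and its class $\theta:=[\phi_\alpha^{tT}]_{t\in[0,1]}\in\pi_1(\conto(M,\xi))\subset\widetilde{\conto}(M,\xi)$ is central; moreover $\theta\cdot\tphi=\widetilde{\phi_\alpha^{tT}\circ\Pi(\tphi)}$, from which $\spec^\alpha(\theta^m\cdot\tphi)=mT+\spec^\alpha(\tphi)$, hence $c(\theta^m\cdot\tphi)=mT+c(\tphi)$ for any $\alpha$-spectral selector $c$, for all $m\in\Z$. (This "shift property" is the one place the periodicity is used, exactly as flagged in the Discussion; it is essentially Sandon's observation in \cite{San11}.) Second, I would observe that $\theta$ is represented by a \emph{positive} loop (the Reeb flow is positive), so $\tphi\cleq\theta\cdot\tphi$ for every $\tphi$, i.e. $\theta$ is a "positive" central element; consequently $c(\theta\cdot\tphi)\ge c(\tphi)$, which combined with the shift property just re-proves $c(\theta\tphi)=c(\tphi)+T$ but, more importantly, shows that for any fixed $\tphi,\tpsi$ there exist integers $N_-\le N_+$ with $\theta^{N_-}\cleq\tpsi\cdot\tphi\cdot\tpsi^{-1}\cleq\theta^{N_+}$ and $\theta^{N_-}\cleq\tphi\cleq\theta^{N_+}$ — both elements are "trapped" between powers of $\theta$ because $\conto(M,\xi)$-elements move points a bounded amount and one can conjugate or compose a generating non-negative/ non-positive path with enough copies of the positive loop $\theta$.

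Now the main argument. Conjugation $\tchi\mapsto\tpsi\cdot\tchi\cdot\tpsi^{-1}$ preserves $\cleq$: if $(\tchi_t)$ has non-negative underlying isotopy joining $\tchi_1$ to $\tchi_2$, then $(\tpsi\cdot\tchi_t\cdot\tpsi^{-1})$ does too, since $\alpha\big(\tfrac{d}{dt}(\Pi(\tpsi)\circ\Pi(\tchi_t)\circ\Pi(\tpsi)^{-1})\big)$ is $\alpha$-non-negativity transported by the contactomorphism $\Pi(\tpsi)$, which rescales $\alpha$ by a positive conformal factor. Also conjugation fixes $\theta$ (it is central). Fix $\tphi,\tpsi$. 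Choose $m\in\Z$ with $c(\tphi)\le mT$, i.e. $\theta^m\cdot\tphi^{-1}$ is "above the identity in selector value"; by the shift and monotonicity I get $\tphi\cleq$ (some power-shift) — more carefully: I claim $\lceil c(\tpsi\tphi\tpsi^{-1})\rceil_T\le\lceil c(\tphi)\rceil_T$. Write $mT=\lceil c(\tphi)\rceil_T$, so $c(\tphi)\le mT$, equivalently $c(\theta^{-m}\tphi)\le 0$. Using a standard "spectral selector value $\le 0$ implies $\cleq\,\mathrm{id}$ up to a null element" type inequality — concretely, $c(\tchi)\le 0$ together with monotonicity does NOT directly give $\tchi\cleq\mathrm{id}$, so instead I pass through the sandwiching: since $\theta^{-m}\tphi\cleq\theta^{N_+}$ and $c(\theta^{-m}\tphi)\le 0<(N_++1)T$... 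The cleanest route is: show directly that for any $\tchi$ and any integer $m$, $c(\tchi)\le mT \iff \tchi\cleq\theta^{m}\cdot\tchi_0$ for a selector-null $\tchi_0$ is too strong; instead I will use that $c$ is \emph{valued in the spectrum}, so $c(\tpsi\tphi\tpsi^{-1})\in\spec^\alpha(\tpsi\tphi\tpsi^{-1})=\spec^\alpha(\tphi)$ (conjugation does not change the conjugacy-invariant set of translated-point times — because $x$ is a $T$-translated point of $\Pi(\tphi)$ iff $\Pi(\tpsi)(x)$ is a $T$-translated point of $\Pi(\tpsi)\Pi(\tphi)\Pi(\tpsi)^{-1}$, using again that conjugation rescales $\alpha$ positively). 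Hence $c(\tpsi\tphi\tpsi^{-1})$ and $c(\tphi)$ lie in the \emph{same} set $\spec^\alpha(\tphi)\subset\R$. That alone is not enough to equate their ceilings; I then bring in monotonicity to pin down \emph{which} spectral value each selector picks, modulo $T$: by the sandwich $\theta^{N_-}\cleq\tphi\cleq\theta^{N_+}$ and $c(\theta^{N_\pm}\cdot\mathrm{id})=N_\pm T$ (shift property applied to $\tphi=\mathrm{id}$, and $c(\mathrm{id})=0$ since $0\in\spec^\alpha(\mathrm{id})$ and by a normalization/continuity argument $c(\mathrm{id})=0$ — or simply $c(\mathrm{id})\in T\Z$ by the shift property, which suffices after adjusting $m$), we get $N_-T\le c(\tphi)\le N_+T$ and likewise $N_-T\le c(\tpsi\tphi\tpsi^{-1})\le N_+T$.

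The actual equality of ceilings then comes from a pinching argument that I expect to be the heart of the proof and the main obstacle: one shows $\lceil c(\,\cdot\,)\rceil_T$ is \emph{conjugation-invariant} by bounding $|c(\tpsi\tphi\tpsi^{-1})-c(\tphi)|<T$ is false in general, so instead one argues that both $\lceil c(\tphi)\rceil_T$ and $\lceil c(\tpsi\tphi\tpsi^{-1})\rceil_T$ equal a single quantity intrinsic to $\tphi$ — for instance $\min\{mT : \tphi\cleq\theta^m\}$, or $\min\{mT : \theta^{-m}\cdot\tphi\ \text{admits a discriminant point at a} \le 0\ \text{time}\}$. Concretely: (a) monotonicity $+$ shift give $c(\tphi)\le mT$ whenever $\tphi\cleq\theta^m$, hence $\lceil c(\tphi)\rceil_T\le T\min\{m:\tphi\cleq\theta^m\}=:\lceil\tphi\rceil$; (b) conversely, because $c(\tphi)\in\spec^\alpha(\tphi)$ and a translated-point-at-time-$mT$ statement interacts with $\cleq$ via the periodicity (a discriminant point of $\theta^{-m}\tphi$ obstructs $\theta^{-m}\tphi\cgeq$ something strictly positive, giving $\tphi\not\cgeq\theta^{m}\cdot(\text{strictly positive})$, which forces $\lceil\tphi\rceil\le\lceil c(\tphi)\rceil_T$ after taking ceilings); combining (a) and (b), $\lceil c(\tphi)\rceil_T=\lceil\tphi\rceil$. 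Since $\cleq$ is conjugation-invariant and $\theta$ is central, $\lceil\tpsi\tphi\tpsi^{-1}\rceil=\lceil\tphi\rceil$, and we are done. I expect step (b) — extracting an order-theoretic lower bound on the ceiling from the mere fact that $c$ lands in the spectrum — to require the most care; it is exactly here that orderability (so that $\cleq$ is a genuine partial order, not just a preorder) is indispensable, since the argument compares $\tphi$ with strictly positive perturbations of powers of $\theta$ and needs antisymmetry to conclude.
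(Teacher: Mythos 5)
Your proposal does not reproduce the paper's argument, and the route you chose has a real gap at exactly the place you yourself flag as the hardest step.

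The paper's proof is short and direct. Pick a representative $(\psi_t)$ of $\tpsi$ and set $\tpsi_s:=[(\psi_{st})]$. First assume $\Pi(\tphi)$ has no discriminant point. Then $\Pi(\tpsi_s\,\tphi\,\tpsi_s^{-1})$ has no discriminant point for every $s$ (discriminant points are sent to discriminant points under conjugation by a contactomorphism, since the conformal factors cancel). Because the Reeb flow is $T$-periodic, $\spec^\alpha$ of any element is $T\Z$-translation invariant; so no discriminant point gives $\spec^\alpha(\tpsi_s\tphi\tpsi_s^{-1})\cap T\Z=\emptyset$, hence $c(\tpsi_s\tphi\tpsi_s^{-1})\notin T\Z$ for all $s$. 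The function $s\mapsto c(\tpsi_s\tphi\tpsi_s^{-1})$ is continuous and takes values in $\R\setminus T\Z$, so it stays inside a single interval $(mT,(m+1)T)$, and $\lceil c(\tpsi_s\tphi\tpsi_s^{-1})\rceil_T$ is constant in $s$; evaluating at $s=0,1$ gives the claim. For general $\tphi$, because $\spec^\alpha(\tphi)$ is nowhere dense one finds $\varepsilon_n\to 0^+$ with $\Pi(\widetilde{\phi_\alpha^{-\varepsilon_n}}\cdot\tphi)$ having no discriminant; the previous case applies to these, and one passes to the limit using $\widetilde{\phi_\alpha^{-\varepsilon_n}}\cdot\tphi\cleq\tphi$, monotonicity and $C^1$-continuity of $c$.

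Your proposal instead tries to identify $\lceil c(\tphi)\rceil_T$ with the order-theoretic quantity $T\min\{m:\tphi\cleq\theta^m\}$ and then exploit conjugation-invariance of that quantity. The direction you call (a) is fine. But (b), which you acknowledge needs ``the most care,'' cannot be established from the hypotheses. Spectrality only forces $c(\tphi)\in\spec^\alpha(\tphi)$, and $\spec^\alpha(\tphi)$ is itself $T\Z$-translation invariant: if $c$ is any monotone $\alpha$-spectral selector and $m\in\Z$, then $c-mT$ is another monotone $\alpha$-spectral selector. Its $T$-ceiling changes by $-mT$, whereas $T\min\{m:\tphi\cleq\theta^m\}$ does not change at all. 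So the identity $\lceil c(\tphi)\rceil_T=T\min\{m:\tphi\cleq\theta^m\}$ cannot hold for all monotone spectral selectors simultaneously, and the hand-waved argument you sketch for (b) (``a discriminant point of $\theta^{-m}\tphi$ obstructs\ldots'') does not repair this: knowing that a particular spectral value is $\le mT$ tells you nothing about the partial order $\tphi\cleq\theta^m$. (This is also consistent with the paper's remark that for the pseudo-spectral selector $c_+^\alpha$ one cannot in general identify $\lceil c_+^\alpha\rceil_T$ with the purely order-theoretic invariant $\nu_+^\alpha$.)

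The correct takeaway is that you do not need to produce an intrinsic formula for $\lceil c(\tphi)\rceil_T$ at all: the statement is only that conjugation preserves the ceiling, and the paper proves this by continuity along the conjugation path $s\mapsto\tpsi_s\tphi\tpsi_s^{-1}$, using that the values of $c$ avoid $T\Z$ on that path whenever $\Pi(\tphi)$ has no discriminant point, and then approximating. Your shift-property observation $c(\theta^m\tphi)=mT+c(\tphi)$ is itself correct and provable by the same continuity-and-nowhere-density device (run $s\mapsto c(\theta_s\tphi)-sT$), but in this lemma it is not the ingredient that closes the argument.
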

\begin{proof}
Consider $(\psi_t)$ a path representing $\tpsi$ and denote by $\tpsi_s:=[(\psi_{st})]$ for all $s\in[0,1]$. Let us first suppose that $\Pi(\tphi)$ does not have discriminant point. Thus $\Pi(\tpsi_s\cdot\tphi\cdot\tpsi_s^{-1})$ does not have discriminant point and so $c(\tpsi_s\cdot \tphi\cdot\tpsi_s^{-1})\notin T\Z$ for all $s\in[0,1]$. We then conclude using the continuity of $s\mapsto c(\tpsi_s\cdot\tphi\cdot\tpsi_s^{-1})$. We treat the general case using the fact that $\spec^\alpha(\tphi)$ is nowhere dense (see \cite[Lemma 2.11]{albers}) and therefore there exists a sequence $(\varepsilon_n)_{n\in \N}$ of positive real numbers going to $0$ so that the sequence $\Pi(\widetilde{\phi_{\alpha}^{-\varepsilon_n}}\cdot\tphi)$ does not have any discriminant for all $n\in\N$. Since $\widetilde{\phi_{\alpha}^{-\varepsilon_n}}\cdot\tphi\cleq\tphi$ and the sequence $(\widetilde{\phi_{\alpha}^{-\varepsilon_n}}\cdot\tphi)$ converges to $\tphi$ in the $C^1$-topology we conclude using the previous case and the monotonicity of the $\alpha$-selector. 
\end{proof}

\section{Spectral selectors for lens spaces}\label{sec : spectral selectors for lens spaces}

In \cite{allais2024spectral} the authors construct  a $\alpha_k$-spectral selector $c_k: \widetilde{\conto}(L_k^{2n+1},\ker\alpha_k)\to\R$ for any $n\in\N_{>0}$ and $k\geq 2$ (denoted in \cite{allais2024spectral} by $c_0$ or $c_+$). Its construction, that we present briefly below, is based on generating functions techniques and Givental's non-linear Maslov index \cite{granja2014givental,Giv90}. From now on we fix $n\in \N_{>0}$ and an integer $k\geq 2$.

 \subsubsection{Givental's non-linear Maslov index}  The Givental's non-linear Maslov index $\mu :\widetilde{\conto}(L_k^{2n+1},\xi_k)\to\Z$ is a quasimorphism that detects discriminant points in the following sense: if $\{\tphi_t\}_{t\in[0,1]}\subset\widetilde{\conto}(L_k^{2n+1},\xi_k)$ is a smooth path, i.e. $(\phi_t:=\Pi(\tphi_t))$ is a contact isotopy, such that $\mu(\tphi_0)\ne\mu(\tphi_1)$, then there exists $t_0\in [0,1]$ such that $\Pi(\tphi_{t_0})$ has a discriminant point. Denoting by $\widetilde{\phi_{\alpha_k}^T}:=[(\phi_{\alpha_k}^{tT})]\in\widetilde{\conto}(L_k^{2n+1},\xi_k)$, for $T\in\R$, it is shown moreover in \cite{Giv90,granja2014givental,allais2024spectral} that $\mu$ has the following properties.
 \begin{lem}[\cite{Giv90,granja2014givental,allais2024spectral}]\label{lem : maslov index}
 For all $\tphi,\tpsi\in\widetilde{\conto}(L_k^{2n+1},\xi_k)$\
 \begin{enumerate}
 \item the map $\R\to\Z$, $T\mapsto \mu(\widetilde{\phi_{\alpha_k}^{-T}}\cdot\tphi)$ is lower semi-continuous
 \item if $\mu(\tphi)$ is even then $\mu(\tpsi\cdot\tphi)\leq \mu(\tpsi)+\mu(\tphi)$
 \item $\mu(\widetilde{\phi_{\alpha_k}^T})=2(n+1)\left\lceil\frac{T}{2\pi}\right\rceil$ for all $T\in\R$
 \item $\mu(\tphi)\leq\mu(\tpsi)$ if $\tphi\cleq\tpsi$,
 \end{enumerate}
 where $\cleq$ is the bi-invariant binary relation introduced by Eliashberg-Polterovich in \cite{EP00}, i.e. $\tphi\cleq\tpsi$ if $\tpsi\cdot\tphi^{-1}$ can be represented by a contact isotopy $(\phi_t)$ such that $\alpha_k\left(\frac{d}{dt}\phi_t\right)\geq 0$. 
 \end{lem}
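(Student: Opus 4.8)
The plan is to recall the generating-function model underlying $\mu$ and then to read each of the four clauses off from that model. First I would recall, following Givental \cite{Giv90}, Granja--Karshon--Pabiniak--Sandon \cite{granja2014givental} and \cite{allais2024spectral}, that every $\tphi\in\widetilde{\conto}(L_k^{2n+1},\xi_k)$ admits a \emph{generating function}: lifting a representing contact isotopy to a $\Z_k$-equivariant contact isotopy of $(\sphere{2n+1},\xi_1)$ and tracking the associated Legendrian, one produces a path $(F_t)$ of $\Z_k$-equivariant functions $F_t\colon(\C^{N}\setminus\{0\})\times\R^{M}\to\R$, homogeneous in the first (conical) factor, quadratic at infinity in the second, and equal to a fixed non-degenerate quadratic form $Q$ near $t=0$; the resulting $F_\tphi$ is essentially unique by the uniqueness theorem for generating functions quadratic at infinity, adapted to this equivariant conical setting. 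Its only role is that the critical points of $F_\tphi$ with critical value $0$ correspond $\Z_k$-equivariantly to the discriminant points of $\Pi(\tphi)$, and those with nonzero critical value $t$ to the $\alpha_k$-translated points of translation $t$.

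Next I would recall the definition
\[ \mu(\tphi)\ :=\ \ind\big(\{F_\tphi<0\}\big)\ -\ \ind\big(\{Q<0\}\big), \]
where $\ind$ is the cohomological (Fadell--Rabinowitz type) index associated to the residual free $\Z_k$-action, built from the powers of the degree-$2$ generator of $H^*(\mathrm{B}\Z_k)$ and therefore taking even values; the subtraction normalizes $\mu(\widetilde{\id})=0$. Independence of the representing isotopy and of the chosen generating function follows from the uniqueness theorem together with the invariance of $\ind$ under stabilization and fibered diffeomorphisms, and the very definition of a cohomological index yields the detection property recalled just before the Lemma: a jump of $\mu$ along a smooth path forces some critical value of the generating functions to cross $0$, i.e.\ a discriminant point.

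From this picture each clause follows. For (1): between the (discrete) values of $T$ at which $\phi_{\alpha_k}^{-T}\circ\Pi(\tphi)$ has a discriminant point the map $T\mapsto\mu(\widetilde{\phi_{\alpha_k}^{-T}}\cdot\tphi)$ is locally constant, and at such exceptional values it agrees with a one-sided limit because the index of the open sublevel sets $\{F<0\}$ is semicontinuous as these sets vary with $T$ --- this is precisely Givental's argument in \cite{Giv90}. For (3): the lift of $\phi_{\alpha_k}^{T}$ to $\sphere{2n+1}$ is the Hopf rotation $z\mapsto e^{iT}z$, for which one may take an explicit quadratic generating function whose negative eigenspace gains $2(n+1)=\dim_\R\C^{n+1}$ dimensions each time $T$ crosses a multiple of $2\pi$ (the rotation being the identity precisely at $T\in2\pi\Z$); after the normalization above this gives $\mu(\widetilde{\phi_{\alpha_k}^{T}})=2(n+1)\lceil T/2\pi\rceil$, a direct linear-algebra computation. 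For (4): if $\tphi\cleq\tpsi$ then $\tpsi\cdot\tphi^{-1}$ is represented by a non-negative contact isotopy, which in the generating-function model makes the open sublevel sets $\{F_t<0\}$ increase monotonically along the isotopy, so $\ind$, hence $\mu$, is non-decreasing along it; combined with the continuity used in (1) this gives $\mu(\tphi)\le\mu(\tpsi)$. Finally (2): the composition (``gluing'') formula for generating functions realizes, up to fibered homotopy, $\{F_{\tpsi\cdot\tphi}<0\}$ as a fibered join of $\{F_\tpsi<0\}$ and $\{F_\tphi<0\}$, and the cohomological index is subadditive on joins, $\ind(A*B)\le\ind(A)+\ind(B)$, the would-be parity correction being suppressed exactly when $\mu(\tphi)$ is even --- this is Givental's quasimorphism estimate.

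The genuinely technical input, and the step I expect to be the main obstacle in a self-contained treatment, is the construction in the first paragraph: producing generating functions quadratic at infinity for contactomorphisms of lens spaces with the correct $\Z_k$-equivariance and homogeneity, proving the attendant uniqueness theorem, and establishing the composition formula that feeds clause (2). Once that machinery is in place, clauses (1), (3) and (4) are soft or purely computational. As all of this is carried out in detail in \cite{Giv90,granja2014givental,allais2024spectral}, in the body of the paper I would simply cite those works, the sketch above serving to indicate where each of the four properties originates.
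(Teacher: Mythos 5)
The paper does not actually prove Lemma~\ref{lem : maslov index}; it is stated as a citation to \cite{Giv90,granja2014givental,allais2024spectral}, and what follows in the text is only a recollection of the \emph{construction} of~$\mu$ (the based family of generating functions, the induced functions on $L_k^{2(N+n)+1}$, and the definition of the cohomological index), not a verification of the four clauses. Your final sentence --- that in the paper one would simply cite those works --- is therefore exactly what the paper does, and the overall plan is fine.

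That said, several technical details in your sketch do not match the framework actually used here, and one is a genuine error worth flagging. First, the generating functions entering Givental's index for lens spaces are not ``quadratic at infinity'' in the Viterbo--Traynor sense: they are \emph{conical}, i.e.\ homogeneous of degree $2$ jointly in the base and fiber variables ($F_t(\lambda e^{2i\pi j/k}z)=\lambda^2 F_t(z)$), with $F_0$ the explicit homogeneous form given in the paper, and the attendant uniqueness and composition statements are the conical/equivariant ones of \cite{granja2014givental}, not the quadratic-at-infinity ones. Second, and more substantively, the index used here is \emph{not} the Fadell--Rabinowitz index built only from powers of the degree-$2$ generator of $H^*(\mathrm{B}\Z_p)$, and it does \emph{not} take even values: it is the $\Z_p$-dimension of the image of $i_A^*\colon H^*(L_p^j,\Z_p)\to H^*(A,\Z_p)$, and since $H^*(L_p^j,\Z_p)$ has a degree-one generator as well as the degree-two one, this dimension can be odd. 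Your claim that the index ``therefore tak[es] even values'' is incorrect and is in direct tension with the rest of your own argument: if $\mu$ were always even, the parity hypothesis in clause~(2) would be vacuous, whereas it is precisely the possible oddness of the index that forces the restriction $\mu(\tphi)$ even in the join-subadditivity estimate. Finally, watch the sign: the paper defines $\mu(\tphi)=\ind(\{f_0\leq 0\})-\ind(\{f_1\leq 0\})$, whereas your $\ind(\{F_\tphi<0\})-\ind(\{Q<0\})$ reverses the two terms; one of the two must be paired with the opposite sign convention for positivity of isotopies in order for clauses (3) and (4) to come out right. None of this changes the correct bottom line (cite \cite{Giv90,granja2014givental,allais2024spectral}), but the sketch as written misdescribes the index that makes clause~(2) work.
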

 Let us give a brief overview of its construction. To do consider $\tphi\in\widetilde{\conto}(L_k^{2n+1},\xi_k)$ and $(\phi_t)$ any contact isotopy representing it.\\

 Firstly, let us denote by $(\overline{\phi_t})\subset\conto(\sphere{2n+1},\xi_1)$ the lifted $\Z_k$-equivariant contact isotopy starting at the identity associated to the contact isotopy $(\phi_t)$, i.e. $\pi_1^k\circ\overline{\phi_t}=\phi_t\circ\pi_1^k$ for all $t\in[0,1]$ and $\overline{\phi_0}=\id$ where $\pi_1^k : \sphere{2n+1}\to L_k^{2n+1}$ is the covering map. We want to construct a path of $C^1$-functions $(f_t : L_k^{2(n+N)+1}\to\R)$, for some integer $N\geq 0$, so that for all $t\in[0,1]$
 \[\{x\in L_k^{2(n+N)+1}\ |\ \ud_x f_t=0,\ f_t(x)=0\}\overset{1-1}\leftrightarrow \{\pi_1^k(x)\in L_k^{2n+1}\ |\ x \text{ is a disc. point of } \overline{\phi}_t\},\]
 i.e. there is a one-to-one correspondance between the critical points of $f_t$ with critical value 0 and the discriminant points of $\phi_t$ that lift to a $\Z_k$-family of discriminant points of $\overline{\phi}_t$.
 This path of function $(f_t)$ is constructed in the following way. Identifying the symplectization of $(\sphere{2n+1},\xi_1)$ with $(\R^{2n+2}\setminus\{0\},\omega_0)$, the $\Z_k$-equivariant contact isotopy $(\overline{\phi}_t)\subset\conto(\sphere{2n+1},\xi_1)$, can itself be uniquely lifted to a $\Z_k\times \R_{>0}$-equivariant Hamiltonian isotopy $(\psi_t)\subset\ham(\R^{2n+2}\setminus\{0\},\omega_0)$ 
 \[\psi_t(z)=e^{-\frac{1}{2}g_t(z/\|z\|)}\|z\|\overline{\phi}_t\left(\frac{z}{\|z\|}\right),\quad \text{ for all } t\in[0,1],\ z\in\R^{2n+2}\setminus\{0\},\]
 where $g_t : \sphere{2n+1}\to\R$ is the $\alpha_1$-conformal factor of $\overline{\phi}_t$, i.e. $\overline{\phi}_t^*\alpha_1=e^{g_t}\alpha_1$. Extending $\psi_t$ to the whole $\R^{2n+2}$ by putting $\psi_t(0)=0$ the authors in \cite[Prop 2.2 and Prop 2.14]{granja2014givental} show the existence of an integer $m\in\N$ and a path of $C^1$-functions $(F_t :\R^{2n+2}\times\R^{2N}\to\R)$, where $2N=(2n+2)\cdot 2m$, such that denoting by $\Sigma_{F_t}:=\{(x,v)\in\R^{2n+2}\times\R^{2N}\ |\ \partial_v F(x,v)\equiv 0\in T^*\R^{2n}\}$ the fiber critical points of $F_t$ we have\footnote{ in the following $\R^{2j}$ is identified with $\C^{j}$, for all $j\in\N_{>0}$, as in the introduction and $T^*\R^{2n+2}$ with $T\R^{2n+2}\simeq\R^{2n+2}\times\R^{2n+2}$ using the standard Euclidean metric}
 \begin{enumerate}

         \item $i_{F_t} :\Sigma_{F_t}\to T^*\R^{2n+2},\ (x,v)\hookrightarrow \left(x,\partial_x F_t(x,v)\right)$ is injective and moreover 
         \[i_{F_t}(\Sigma_{F_t})=\tau(\mathrm{graph}(\psi_t))\]
          where $\text{graph}(\psi_t):=\{(x,\psi_t(x))\ |\ x\in\R^{2n+2}\}$ and $\tau(z,Z)=\left(\frac{z+Z}{2},i(z-Z)\right)$ for all $z,Z\in\R^{2n+2}\times\R^{2n+2}.$
 
     \item $F_t(\lambda e^{\frac{2i\pi j}{k}}z)=\lambda^2 F_t(z)$ for all $\lambda>0$, $z\in\R^{2n+2}\times\R^{2N}$ and $j\in\N$
     \item $F_0 :\R^{2n+2}\times(\R^{2n+2})^{2m}\to\R$, $(z,(Z_1,\cdots,Z_{2m}))\mapsto -2\sum\limits_{j=1}^m\left<Z_{2j}-z,i(Z_{2j-1}-z)\right>$.
      \end{enumerate}
 The path of functions $(F_t)$ induces the desired  path $(f_t : L_k^{2(n+N)+1}\to\R)$ and $(F_t)$ is called a based family of generating functions associated to $(\phi_t)$. \\


 Secondly, suppose that $k=p\in\N_{>0}$ is a prime number. If $(\phi'_t)$ is another contact isotopy in $\conto(L_p^{2n+1},\xi_p)$ representing the same element $\tphi$ and if $(f_t : L_p^{2N+1}\to\R)$ and $(f_t': L_p^{2N'+1}\to\R)$ are two paths of functions that are induced by two based families of generating functions associated respectively to $(\phi_t)$ and $(\phi'_t)$, then it is proven in \cite[Prop. 2.20 and Section 4]{granja2014givental} that
 \[\ind(\{f_0\leq 0\})-\ind(\{f_1\leq 0\})=\ind(\{f'_0\leq 0\})-\ind(\{f_1'\leq 0\}).\]
 Here the index $\ind(A)$ of an open subset $A\subset L_p^{j}$, for any integer $j\geq 3$, is the dimension over the field $\Z_p$ of the image of $i_A ^* : H^*(L_p^{j},\Z_p)\to H^*(A,\Z_p)$, where $i_A$ denotes the inclusion and $i_A^*$ the induced map on singular homology,  and the index of a closed subset $F$ is by definition $\ind(F):=\min\{\ind(A)\ |\ A \text{ open and } F\subset A\}$. The Givental's non-linear Maslov index of $\tphi$ is then defined to be
 \[\mu(\tphi):=\ind(\{f_0\leq 0\})-\ind(\{f_1\leq 0\}).\]
 
 Finally, for a general integer $k\geq 2$, denoting by $p\in \N_{>0}$ the smallest prime dividing $k$, we define $\mu(\tphi)$ by
 \[\mu(\tphi):=\mu(\Leg_k^p(\tphi)),\]
 where $\Leg_k^p : \widetilde{\conto}(L_k^{2n+1},\xi_k)\to\widetilde{\conto}(L_p^{2n+1},\xi_p)$ denotes the natural lift map. 

\subsubsection{The spectral selector on $(L_k^{2n+1},\xi_k)$} The $\alpha_k$-spectral introduced in \cite{allais2024spectral} is defined by
\[c_k :\widetilde{\conto}(L_k^{2n+1},\xi_k)\to\R\quad \quad \tphi\mapsto \inf\left\{T\in\R\ |\ \mu\left(\widetilde{\phi_{\alpha_k}^{-T}}\cdot\tphi\right)\leq 0\right\}.\]
Before stating its main properties we introduce some notations. For any $\tphi\in\widetilde{\conto}(L_k^{2n+1},\xi_k)$ we consider the following set
\[\overline{\spec^{\alpha_k}}(\tphi):=\spec^{\alpha_1}(\overline{\tphi})\subset\spec^{\alpha_k}(\tphi)\]
where $\overline{\tphi}\in\widetilde{\conto}(\sphere{2n+1},\xi_1)$ denotes the $\Z_k$-equivariant lift of $\tphi$. For any positive number $T>0$ and any $x\in\R$ we denote by $\lceil x \rceil_{T}:=T\left\lceil \frac{x}{T}\right\rceil$ the smallest multiple of $T$ greater or equal to $x$.

 


\begin{prop}[\cite{allais2024spectral}]\label{prop : spectral selector for L_k^{2n+1}} The map $c_k : \widetilde{\conto}(L_k^{2n+1},\xi_k)\to\R$ is a $\alpha_k$-spectral selector that moreover satisfies for all $\tphi,\tpsi\in\widetilde{\conto}(L_k^{2n+1},\xi_k)$
\begin{enumerate}  
\item $c_k(\id)=0$ 
\item $\tphi\cleq\tpsi$ implies that $c_k(\tphi)\leq c_k(\tpsi)$
\item $\id\cleq\tphi$ and $\tphi\ne\id$ implies that $c_k(\tphi)>0$
\item $c_k(\tpsi\cdot\tphi)\leq c_k(\tpsi)+\lceil c_k(\tphi)\rceil_{\frac{2\pi}{k}}$

\item for any contact isotopy $(\phi_t)$ representing $\tphi$ \[\int_0^1\min \alpha_k\left(\frac{d}{dt}\phi_t\right)\ \ud t\leq c_k(\tphi)\leq\int_0^1\max \alpha_k\left(\frac{d}{dt}\phi_t\right)\ \ud t\]
\item $c_k(\tphi)\in\overline{\spec^{\alpha_k}}(\tphi)$
\item $\lceil c_k(\tpsi\cdot\tphi\cdot\tpsi^{-1})\rceil _{\frac{2\pi}{k}}= \lceil c_k(\tphi) \rceil _{\frac{2\pi}{k}}$. 
\end{enumerate}
\end{prop}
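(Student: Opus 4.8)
The plan is to derive properties (1)--(6) from the closed-form definition $c_k(\tphi)=\inf\{T\in\R\mid\mu(\widetilde{\phi_{\alpha_k}^{-T}}\cdot\tphi)\le 0\}$ together with the properties of Givental's non-linear Maslov index $\mu$ recorded in Lemma \ref{lem : maslov index} (this being the content of \cite{allais2024spectral}), and then to obtain the conjugation-invariance (7) as a direct application of Lemma \ref{lem : prequantization}.

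First I would verify that $c_k$ is a well-defined $\alpha_k$-spectral selector. For finiteness, the quasimorphism bound keeps $T\mapsto\mu(\widetilde{\phi_{\alpha_k}^{-T}}\cdot\tphi)$ within bounded distance of $T\mapsto\mu(\widetilde{\phi_{\alpha_k}^{-T}})=2(n+1)\lceil-T/2\pi\rceil$, so the defining set is non-empty and bounded below. For spectrality, the lower semicontinuity of Lemma \ref{lem : maslov index}(1) shows that $\mu$ strictly drops at $T=c_k(\tphi)$, and since $\mu$ detects discriminant points this produces a discriminant point of $\Pi(\widetilde{\phi_{\alpha_k}^{-c_k(\tphi)}}\cdot\tphi)$; because $\mu$ is built from generating families of the $\Z_k$-equivariant lift to $\sphere{2n+1}$, this discriminant point already occurs on that lift, so $c_k(\tphi)\in\overline{\spec^{\alpha_k}}(\tphi)$, which is (6), while $C^1$-continuity is the usual consequence of the continuous dependence of the generating families. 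Property (1) is the computation $\mu(\widetilde{\phi_{\alpha_k}^{-T}})=2(n+1)\lceil-T/2\pi\rceil$, which is $\le 0$ precisely for $T\ge 0$. For (2): conjugating a non-negative contact isotopy by $\widetilde{\phi_{\alpha_k}^{-T}}$ yields again a non-negative one, so $\tphi\cleq\tpsi$ gives $\widetilde{\phi_{\alpha_k}^{-T}}\cdot\tphi\cleq\widetilde{\phi_{\alpha_k}^{-T}}\cdot\tpsi$ for every $T$, and Lemma \ref{lem : maslov index}(4) then makes the admissible set of $T$ for $\tpsi$ a subset of that for $\tphi$, giving $c_k(\tphi)\le c_k(\tpsi)$. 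Property (4) is the quasimorphism inequality of Lemma \ref{lem : maslov index}(2)--(3) applied to the pair $\widetilde{\phi_{\alpha_k}^{-T_1}}\cdot\tpsi$ and $\widetilde{\phi_{\alpha_k}^{-T_2}}\cdot\tphi$ with $T_2=\lceil c_k(\tphi)\rceil_{2\pi/k}$ (so that the index of the second factor is $\le 0$ and even) and $T_1$ chosen slightly above $c_k(\tpsi)$. Property (5) follows from (2) and the identity $c_k(\widetilde{\phi_{\alpha_k}^{T}})=T$ by the standard comparison of $\tphi$ with Reeb-flow classes. The only property that is not formal is (3), strict positivity of $c_k$ on a nontrivial $\tphi\cgeq\id$: this genuinely uses orderability of $(L_k^{2n+1},\xi_k)$ \cite{granja2014givental} together with a local analysis of $\mu$ near the identity, and it is here that the construction of \cite{allais2024spectral} does real work.

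For (7) I would invoke Lemma \ref{lem : prequantization} with $(M,\xi)=(L_k^{2n+1},\xi_k)$, $\alpha=\alpha_k$ and $T=2\pi/k$: the lens space is orderable for $k\ge 2$ by \cite{granja2014givental}, the $\alpha_k$-Reeb flow is periodic of minimal period $2\pi/k$ (being the quotient by $\Z_k$ of the $2\pi$-periodic Hopf flow), and, by the first part of the proposition together with (2), $c_k$ is a monotone $\alpha_k$-spectral selector; Lemma \ref{lem : prequantization} then yields $\lceil c_k(\tpsi\cdot\tphi\cdot\tpsi^{-1})\rceil_{2\pi/k}=\lceil c_k(\tphi)\rceil_{2\pi/k}$ verbatim. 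The main obstacle is thus concentrated in property (3); all of (1)--(2), (4)--(6) are formal consequences of Lemma \ref{lem : maslov index}, and (7) is a one-line consequence of Lemma \ref{lem : prequantization} once monotonicity (2) is known.
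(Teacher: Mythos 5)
Your reconstruction matches the paper at the one place where the paper actually supplies an argument: property (7) is indeed obtained by applying Lemma~\ref{lem : prequantization} with $T=\tfrac{2\pi}{k}$, using orderability of $(L_k^{2n+1},\xi_k)$, the $\tfrac{2\pi}{k}$-periodicity of the $\alpha_k$-Reeb flow, and monotonicity (2) together with spectrality; this is exactly how the author derives the conjugation-invariance of $\lceil c_k(\cdot)\rceil_{\frac{2\pi}{k}}$ (and hence of $C_k$) in Section~\ref{sec : Contact capacity}. For properties (1)--(6) the paper simply cites \cite{allais2024spectral}, so there is no in-paper proof to compare against; your sketches of (1), (2), (5), (6) and your assessment of (3) are a reasonable reconstruction of that reference.

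However, your sketch of (4) has a genuine gap. You choose $T_2=\lceil c_k(\tphi)\rceil_{\frac{2\pi}{k}}$, which is a good choice because it makes $\widetilde{\phi_{\alpha_k}^{T_2}}$ a deck transformation, hence central in $\widetilde{\conto}(L_k^{2n+1},\xi_k)$, so that the factorization $\widetilde{\phi_{\alpha_k}^{-(T_1+T_2)}}\tpsi\tphi=(\widetilde{\phi_{\alpha_k}^{-T_1}}\tpsi)(\widetilde{\phi_{\alpha_k}^{-T_2}}\tphi)$ is legitimate for arbitrary $\tpsi$. But the parenthetical ``so that the index of the second factor is $\le 0$ and even'' asserts more than is available. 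Lemma~\ref{lem : maslov index} only gives $\mu(\widetilde{\phi_{\alpha_k}^{-T_2}}\tphi)\le 0$; it does not give evenness, and evenness is precisely the hypothesis needed to apply the quasimorphism inequality (2) of Lemma~\ref{lem : maslov index}. Nothing in Lemma~\ref{lem : maslov index} makes $\mu$ even at the ceiling time $T_2$, nor is evenness stable under small changes of $T$, since $\mu$ can jump as the $\alpha_k$-spectrum is crossed between $c_k(\tphi)$ and $T_2$. The paper's own proof of the related, sharper Proposition~\ref{lem : inegalite triangulaire} shows what is really required: one first invokes Lemma~\ref{lem : non-deg} to get $\mu(\widetilde{\phi_{\alpha_k}^{-c_k(\tphi)}}\tphi)=0$ (hence even) for $\alpha_k$-nondegenerate $\Pi(\tphi)$, then applies the quasimorphism inequality, and finally passes to general $\tphi$ by $C^1$-density of nondegenerate elements and $C^1$-continuity of $c_k$. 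In other words, (4) is not a formal consequence of Lemma~\ref{lem : maslov index} alone, so your claim that (3) is ``the only property that is not formal'' undersells the difficulty; (4) also depends on the nondegeneracy-plus-density machinery of \cite{allais2024spectral}.
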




Before concluding this section it is worth noticing that we have a stronger triangular inequality and invariance by conjugation for elements in \[\widetilde{\conto}^{\mathbb{T}^1}(L_k^{2n+1},\xi_k):=\left\{ \tpsi\in\widetilde{\conto}(L_k^{2n+1},\xi_k)\ \left|\
        \parbox{6cm}{there exists $(\psi_t)$ representing $\tpsi$
    such that $\phi_{\alpha_k}^s\circ\psi_t=\psi_t\circ\phi_{\alpha_k}^s$ for all $s\in\R$ and $t\in[0,1]$}\right.\right\}.\]

\begin{prop}
    \label{lem : inegalite triangulaire}
   For any $\tpsi\in \widetilde{\conto}^{\mathbb{T}^1}(L_k^{2n+1},\xi_k)$, $\tphi\in\widetilde{\conto}(L_k^{2n+1},\xi_k)$ 
   \[c_k(\tpsi\cdot\tphi)\leq c_k(\tpsi)+c_k(\tphi)\]
\end{prop}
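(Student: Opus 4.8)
The plan is to improve the triangular inequality of Proposition~\ref{prop : spectral selector for L_k^{2n+1}}(4), namely $c_k(\tpsi\cdot\tphi)\leq c_k(\tpsi)+\lceil c_k(\tphi)\rceil_{\frac{2\pi}{k}}$, by removing the rounding $\lceil\cdot\rceil_{\frac{2\pi}{k}}$ when $\tpsi$ commutes with the Reeb flow. The key structural fact I would exploit is that for $\tpsi\in\widetilde{\conto}^{\mathbb{T}^1}(L_k^{2n+1},\xi_k)$ and \emph{any} real number $S$, the Reeb flow and $\tpsi$ commute, so $\tpsi\cdot\widetilde{\phi_{\alpha_k}^{S}}=\widetilde{\phi_{\alpha_k}^{S}}\cdot\tpsi$, and more importantly $\widetilde{\phi_{\alpha_k}^{-S}}\cdot\tpsi\cdot\tphi=\tpsi\cdot\widetilde{\phi_{\alpha_k}^{-S}}\cdot\tphi$. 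This lets me shift the Reeb parameter past $\tpsi$ freely, which is exactly what is needed to avoid rounding $c_k(\tphi)$ up to the next multiple of the period.

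First I would recall the definition $c_k(\tchi)=\inf\{T\in\R\mid \mu(\widetilde{\phi_{\alpha_k}^{-T}}\cdot\tchi)\leq 0\}$ and the fact from Lemma~\ref{lem : maslov index} that $T\mapsto\mu(\widetilde{\phi_{\alpha_k}^{-T}}\cdot\tchi)$ is lower semi-continuous, so the infimum is attained and $\mu(\widetilde{\phi_{\alpha_k}^{-c_k(\tchi)}}\cdot\tchi)\leq 0$. Fix $\varepsilon>0$ and set $a:=c_k(\tpsi)$, $b:=c_k(\tphi)$. The goal is $c_k(\tpsi\cdot\tphi)\leq a+b$, i.e. to show $\mu(\widetilde{\phi_{\alpha_k}^{-(a+b+\varepsilon)}}\cdot\tpsi\cdot\tphi)\leq 0$. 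Using commutativity I would rewrite
\[
\widetilde{\phi_{\alpha_k}^{-(a+b+\varepsilon)}}\cdot\tpsi\cdot\tphi
=\bigl(\widetilde{\phi_{\alpha_k}^{-(a+\varepsilon)}}\cdot\tpsi\bigr)\cdot\bigl(\widetilde{\phi_{\alpha_k}^{-b}}\cdot\tphi\bigr).
\]
Now $\mu(\widetilde{\phi_{\alpha_k}^{-b}}\cdot\tphi)\leq 0$ by the attained infimum. The subadditivity property Lemma~\ref{lem : maslov index}(2) requires the \emph{first} factor to have even Maslov index; I would instead want to apply it with the roles arranged so the hypothesis holds. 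The cleanest route: since $\mu$ of $\widetilde{\phi_{\alpha_k}^{-b}}\cdot\tphi$ is $\leq 0$, and since $c_k(\tpsi)=a$ gives $\mu(\widetilde{\phi_{\alpha_k}^{-(a+\varepsilon)}}\cdot\tpsi)\leq\mu(\widetilde{\phi_{\alpha_k}^{-(a+\varepsilon')}}\cdot\tpsi)$ for $\varepsilon'<\varepsilon$ by monotonicity of the Reeb conjugation, one can choose a parameter at which $\mu(\widetilde{\phi_{\alpha_k}^{-(a+\varepsilon)}}\cdot\tpsi)$ equals $0$ (it is $\leq 0$, and for slightly smaller shift it is $>0$ by definition of the infimum as $c_k(\tpsi)=a$; lower semi-continuity then pins a value, and one shows it can be taken even, using property Lemma~\ref{lem : maslov index}(3) for the Reeb flow, whose Maslov index is $2(n+1)\lceil\cdot\rceil$, always even). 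With the even factor in hand, Lemma~\ref{lem : maslov index}(2) gives
\[
\mu\bigl((\widetilde{\phi_{\alpha_k}^{-(a+\varepsilon)}}\cdot\tpsi)\cdot(\widetilde{\phi_{\alpha_k}^{-b}}\cdot\tphi)\bigr)
\leq \mu(\widetilde{\phi_{\alpha_k}^{-(a+\varepsilon)}}\cdot\tpsi)+\mu(\widetilde{\phi_{\alpha_k}^{-b}}\cdot\tphi)\leq 0+0=0,
\]
hence $c_k(\tpsi\cdot\tphi)\leq a+b+\varepsilon$, and letting $\varepsilon\to 0$ finishes the proof.

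I expect the main obstacle to be the parity issue: Lemma~\ref{lem : maslov index}(2) only gives subadditivity when one of the arguments has \emph{even} Maslov index, so I must be careful to engineer a decomposition $\tpsi\cdot\tphi=A\cdot B$ with $\mu(A)$ even, $\mu(A)\leq 0$ arbitrarily close to something controlled by $c_k(\tpsi)$, and $\mu(B)\leq 0$ controlled by $c_k(\tphi)$. The commutativity of $\tpsi$ with the Reeb flow is precisely what provides the freedom to slide the Reeb parameter so that $A=\widetilde{\phi_{\alpha_k}^{-s}}\cdot\tpsi$ lands on an even value of $\mu$ for an appropriate $s\leq c_k(\tpsi)+\varepsilon$ — here I would use that $\mu(\widetilde{\phi_{\alpha_k}^{-s}}\cdot\tpsi)$ is integer-valued, lower semi-continuous and non-increasing in $s$, jumping at the points of $\spec^{\alpha_k}(\tpsi)$, together with the observation that near $s=c_k(\tpsi)$ the value is already $\leq 0$ while just before it is $>0$. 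A slightly slicker alternative, which I would present if the parity juggling gets delicate, is to first prove the inequality under the extra assumption that neither $\Pi(\tphi)$ nor $\Pi(\tpsi)$ has a discriminant point (so both $c_k$-values avoid $\frac{2\pi}{k}\Z$ and the relevant $\mu$'s stabilize to even values on a neighborhood), and then pass to the general case by the same nowhere-density and approximation argument used in the proof of Lemma~\ref{lem : prequantization}, perturbing $\tphi$ and $\tpsi$ by small Reeb shifts $\widetilde{\phi_{\alpha_k}^{-\varepsilon_n}}$ and invoking $C^1$-continuity of $c_k$.
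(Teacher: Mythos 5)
Your overall strategy --- slide the Reeb parameter past $\tpsi$ using commutativity, then invoke the subadditivity of $\mu$ on a factorization $(\widetilde{\phi_{\alpha_k}^{-s}}\cdot\tpsi)\cdot(\widetilde{\phi_{\alpha_k}^{-b}}\cdot\tphi)$ --- is exactly the paper's. But there is a genuine gap in how you handle the parity hypothesis of Lemma~\ref{lem : maslov index}(2). That lemma reads: if $\mu(\tphi)$ is even, then $\mu(\tpsi\cdot\tphi)\leq\mu(\tpsi)+\mu(\tphi)$. The evenness condition sits on the \emph{right-hand} factor $\tphi$ of the group product $\tpsi\cdot\tphi$, i.e. on $\widetilde{\phi_{\alpha_k}^{-b}}\cdot\tphi$ in your decomposition. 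You instead try to arrange $\mu(\widetilde{\phi_{\alpha_k}^{-(a+\varepsilon)}}\cdot\tpsi)=0$, which is the other factor and does not license the application of Lemma~\ref{lem : maslov index}(2). There is no stated variant of the subadditivity with the parity hypothesis on the left factor, so the inequality chain does not close.

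The missing ingredient is Lemma~\ref{lem : non-deg}: if $\Pi(\tphi)$ is $\alpha_k$\emph{-nondegenerate}, then $\mu(\widetilde{\phi_{\alpha_k}^{-c_k(\tphi)}}\cdot\tphi)=0$. This is what guarantees the right-hand factor has even (indeed zero) Maslov index. With that in hand one gets, writing $T=c_k(\tphi)$ and $T'=c_k(\tpsi)$,
\[
\mu\bigl(\widetilde{\phi_{\alpha_k}^{-(T'+T)}}\cdot\tpsi\cdot\tphi\bigr)
=\mu\bigl((\widetilde{\phi_{\alpha_k}^{-T'}}\cdot\tpsi)\cdot(\widetilde{\phi_{\alpha_k}^{-T}}\cdot\tphi)\bigr)
\leq \mu(\widetilde{\phi_{\alpha_k}^{-T'}}\cdot\tpsi)+0\leq 0,
\]
and the general case is then obtained by $C^1$-density of $\alpha_k$-nondegenerate elements and $C^1$-continuity of $c_k$ --- note that only $\tphi$ needs to be perturbed, which is essential, because by the Remark following the proposition every $\mathbb{T}^1$-equivariant contactomorphism is automatically $\alpha_k$-degenerate, so $\tpsi$ can never be made nondegenerate. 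Two secondary problems in your write-up: (i) for an integer-valued lower semi-continuous function there is no reason the value $0$ is ever attained --- it could jump from $2$ to $-4$, say --- so ``one can choose a parameter at which $\mu=0$'' is unjustified; and (ii) your fallback route via approximation by elements with no discriminant points confuses discriminant-freeness with nondegeneracy, and the assertion that avoiding $\frac{2\pi}{k}\Z$ forces the relevant $\mu$'s to ``stabilize to even values'' has no support in the stated properties of $\mu$.
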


To prove Proposition \ref{lem : inegalite triangulaire}, let us recall that $\phi\in\conto(L_k^{2n+1},\xi_k)$ is said to be $\alpha_k$-nondegenerate if for all $\alpha_k$-translated points $x\in L_k^{2n+1}$ of $\phi$ there does not exist any $X\in T_x L_k^{2n+1}\setminus \{0\}$ and $T\in\R$ such that
 \[\ud_x(\phi_{\alpha_k}^{-T}\circ\phi) (X)=X \text{ and } \ud_x g(X)=0\]
 where $g : L_k^{2n+1}\to\R$ is the $\alpha_k$-conformal factor of $\phi$, i.e. $\phi^*\alpha_k=e^g\alpha_k$. The next Lemma is also contained in \cite[Thm 1.1]{allais2024spectral}.

 \begin{lem}\label{lem : non-deg}
   Let $\tphi\in\widetilde{\conto}(L_k^{2n+1},\xi_k)$. If $\Pi(\tphi)$ is $\alpha_k$-nondegenerate then $\mu(\widetilde{\phi_{\alpha_k}^{-T}}\cdot\tphi)=0$ where $T:=c_k(\tphi)$.
 \end{lem}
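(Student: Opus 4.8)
\emph{Plan and Step 1.} The plan is to squeeze $T:=c_k(\tphi)$ between consecutive values of the monotone, integer-valued function $h(s):=\mu(\widetilde{\phi_{\alpha_k}^{-s}}\cdot\tphi)$: this yields $\mu(\widetilde{\phi_{\alpha_k}^{-T}}\cdot\tphi)\leq 0$ at once, and the genuine content of the Lemma is then to rule out, \emph{using} the nondegeneracy of $\Pi(\tphi)$, that this value is $\leq -1$. For the first point, note that for $s_1\le s_2$ one has $(\widetilde{\phi_{\alpha_k}^{-s_1}}\cdot\tphi)\cdot(\widetilde{\phi_{\alpha_k}^{-s_2}}\cdot\tphi)^{-1}=\widetilde{\phi_{\alpha_k}^{\,s_2-s_1}}$, whence $\id\cleq\widetilde{\phi_{\alpha_k}^{\,s_2-s_1}}$ (as $s_2-s_1\ge 0$) and so $\widetilde{\phi_{\alpha_k}^{-s_2}}\cdot\tphi\cleq\widetilde{\phi_{\alpha_k}^{-s_1}}\cdot\tphi$ by bi-invariance of $\cleq$; thus $h$ is non-increasing by Lemma~\ref{lem : maslov index}(4), and by Lemma~\ref{lem : maslov index}(1) it is lower semi-continuous. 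A non-increasing, lower semi-continuous, $\Z$-valued function is right-continuous, with all jumps from the left. Comparing with the Reeb flow (if $(\phi_t)$ represents $\tphi$ and $C_0:=\sup_{t,x}|\alpha_k(\tfrac{\ud}{\ud t}\phi_t(x))|$, then $\widetilde{\phi_{\alpha_k}^{-s-C_0}}\cleq\widetilde{\phi_{\alpha_k}^{-s}}\cdot\tphi\cleq\widetilde{\phi_{\alpha_k}^{-s+C_0}}$), Lemma~\ref{lem : maslov index}(3)--(4) give $h(s)\to+\infty$ as $s\to-\infty$ and $h(s)\to-\infty$ as $s\to+\infty$. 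Hence $\{s:h(s)\le 0\}$ is a left-closed half-line, whose infimum is $c_k(\tphi)=T$ by definition; therefore $\mu(\widetilde{\phi_{\alpha_k}^{-T}}\cdot\tphi)=h(T)\le 0$, while $h(s)\ge 1$ for $s<T$. This step uses no nondegeneracy.

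\emph{Step 2: reduction to one critical level.} Since $\Pi(\tphi)$ is $\alpha_k$-nondegenerate, each of its $\alpha_k$-translated points is isolated together with its translation length, so by compactness $\spec^{\alpha_k}(\Pi(\tphi))$ is discrete (indeed invariant under $s\mapsto s+\tfrac{2\pi}{k}$, because $\phi_{\alpha_k}^{-2\pi/k}=\id$). In particular $T$ is isolated in $\spec^{\alpha_k}(\Pi(\tphi))$, and since $\mu$ changes along a path only at a parameter whose contactomorphism has a discriminant point, $h$ is locally constant off $\spec^{\alpha_k}(\Pi(\tphi))$; together with the right-continuity of Step~1 this gives $\delta>0$ with $h\equiv a$ on $(T-\delta,T)$ and $h\equiv b$ on $[T,T+\delta)$, where $a\ge 1$ and $b=\mu(\widetilde{\phi_{\alpha_k}^{-T}}\cdot\tphi)\le 0$. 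It remains to prove $b\ge 0$.

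\emph{Step 3 (the crux): ruling out $b\le -1$ via generating functions.} I would unwind the construction of Section~\ref{sec : spectral selectors for lens spaces}: a based family of generating functions for a path representing $\widetilde{\phi_{\alpha_k}^{-T}}\cdot\tphi$ yields a $C^1$ function $f\colon L_k^{2(n+N)+1}\to\R$ whose critical points at critical value $0$ correspond to the relevant discriminant points of $\phi_{\alpha_k}^{-T}\circ\Pi(\tphi)$; nondegeneracy of $\Pi(\tphi)$ forces these finitely many critical points to be Morse-nondegenerate, so $0$ is an isolated critical value near which $f$ is Morse. Translating the value $0$ of $f$ realises the family $\widetilde{\phi_{\alpha_k}^{-s}}\cdot\tphi$ for $s$ near $T$, and after the standard reparametrisation $\mu(\widetilde{\phi_{\alpha_k}^{-s}}\cdot\tphi)=\ind(\{f_0\le 0\})-\ind(\{f\le\ell(s)\})$ with $\ell$ continuous, strictly decreasing, $\ell(T)=0$, and $C:=\ind(\{f_0\le 0\})$ (the index of the identity's generating sublevel set) independent of $\tphi$ and $s$. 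Using that $\ind$ is a Fadell--Rabinowitz-type index — $\ind(A)\ge m$ iff the $(m-1)$-st power of the characteristic class of the lens space survives to $A$ — this exhibits $c_k(\tphi)$ as the min-max value of a single cohomology class $\omega^{C-1}$, and reduces the claim $b\ge 0$ to the purely local assertion that, for the Morse function $f$, the index $\ind(\{f\le\ell\})$ crosses the threshold $C$ \emph{exactly} to $C$ as $\ell$ passes the nondegenerate critical value $0$ (equivalently, $\omega^{C-1}$ becomes nonzero on $\{f\le\ell\}$ while $\omega^{C}$ stays zero on $\{f\le 0\}$). This I would attack by the long exact sequence of $(\{f\le\varepsilon\},\{f\le-\varepsilon\})$ together with the Morse decomposition of $H^*(\{f\le\varepsilon\},\{f\le-\varepsilon\})$ with $\Z_p$-coefficients (one summand, in degree equal to the Morse index of $p_i$, per critical point $p_i$ at level $0$): detecting $\omega^{C}$ at level $0$ would need a critical point of Morse index $C$ there, and it is precisely here that the nondegeneracy hypothesis — in the strong form quantified over all $T$, applied to the $p_i$ and their admissible translation lengths — must be used to exclude such a configuration at the min-max level. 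With Step~1 this gives $b=0$, i.e. $\mu(\widetilde{\phi_{\alpha_k}^{-T}}\cdot\tphi)=0$.

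\emph{Main obstacle.} The whole difficulty is Step~3, and within it the local analysis at the nondegenerate min-max critical level: showing that $\ind(\{f\le\ell\})$ lands exactly at the threshold $C$ rather than overshooting it. Making this precise forces one into the detailed machinery of \cite{allais2024spectral,granja2014givental,Giv90} (the action of the Reeb shift on the generating sublevel sets, the behaviour of $\ind$ on lens spaces, and the exact translation of the nondegeneracy condition into Morse-nondegeneracy of $f$), whereas Steps~1--2 are soft and use only Lemma~\ref{lem : maslov index} and the fact (cf. \cite[Lemma~2.11]{albers}) that $\spec^{\alpha_k}$ is nowhere dense, and discrete under nondegeneracy.
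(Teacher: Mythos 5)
The paper offers no proof of Lemma~\ref{lem : non-deg} to compare against: it is simply quoted as being contained in \cite[Thm 1.1]{allais2024spectral}, where it is part of the construction of $c_k$ itself. Your Steps 1--2 are correct and genuinely soft: monotonicity (Lemma~\ref{lem : maslov index}(4)) together with lower semi-continuity (Lemma~\ref{lem : maslov index}(1)) does make $s\mapsto\mu(\widetilde{\phi_{\alpha_k}^{-s}}\cdot\tphi)$ a non-increasing, right-continuous, $\Z$-valued function with the stated asymptotics, so $\mu(\widetilde{\phi_{\alpha_k}^{-T}}\cdot\tphi)\le 0$ at $T=c_k(\tphi)$. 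But, as you note, this half uses no nondegeneracy at all; the entire content of the lemma is the reverse inequality.

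That reverse inequality is your Step 3, and Step 3 is a program rather than a proof. It rests on three unestablished claims about the generating-function machinery: (i) that $\alpha_k$-nondegeneracy of $\Pi(\tphi)$ translates into Morse-nondegeneracy of the induced function $f$ at the critical value $0$; (ii) that the whole family $\widetilde{\phi_{\alpha_k}^{-s}}\cdot\tphi$, for $s$ near $T$, is captured by sublevel sets $\{f\le\ell(s)\}$ of a \emph{single} function with $\mu=C-\ind(\{f\le\ell(s)\})$; and (iii) that the cohomological index $\ind$ of a sublevel set crosses the min-max threshold by exactly the right amount at a nondegenerate critical level. Point (iii) is exactly where the nondegeneracy hypothesis must do its work, and you say so yourself ("it is precisely here that the nondegeneracy hypothesis \dots must be used"), but you do not carry it out: the long-exact-sequence argument you sketch would show that $\ind$ can only increase by the number of critical points of the relevant Morse indices at level $0$, which bounds the jump but does not by itself force $b\ge 0$ without the structure of the class being detected at the min-max level (a Lusternik--Schnirelmann-type argument specific to the $\Z_p$-index on lens spaces). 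As written, the proposal proves $\mu(\widetilde{\phi_{\alpha_k}^{-T}}\cdot\tphi)\le 0$ and then reduces the equality to essentially the statement of \cite[Thm 1.1]{allais2024spectral} that the paper itself invokes, so it does not constitute an independent proof of the lemma.
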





\begin{proof}[Proof of Proposition] \ref{lem : inegalite triangulaire}\
   Let us first suppose that $\Pi(\tphi)$ is $\alpha_k$-nondegenerate and denote by $T:=c_k(\tphi)$, $T':=c_k(\tpsi)$. Then by Lemma \ref{lem : non-deg} we have that $\mu(\widetilde{\phi_{\alpha_k}^{-T}}\cdot\tphi)=0$ is even.  Therefore, since $\tpsi$ commutes with $\widetilde{\phi_{\alpha_k}^s}$ for all $s\in\R$ we have
   \[\begin{aligned}\mu(\widetilde{\phi_{\alpha_k}^{-(T'+T)}}\cdot\tpsi\cdot\tphi)=\mu(\widetilde{\phi_{\alpha_k}^{-T'}}\cdot\tpsi\cdot\widetilde{\phi_{\alpha_k}^{-T}}\cdot\tphi)&\leq \mu(\widetilde{\phi_{\alpha_k}^{-T'}}\cdot\tpsi)+\mu(\widetilde{\phi_{\alpha_k}^{-T}}\cdot\tphi)\\
   &\leq \mu(\widetilde{\phi_{\alpha_k}^{-T'}}\cdot\tphi)\leq 0,
   \end{aligned}\]
   where the first inequality follows from the 2nd point of Lemma \ref{lem : maslov index} and the last inequality from the 1st property of Lemma \ref{lem : maslov index}. This implies that \[c_k(\tpsi\cdot\tphi)\leq c_k(\tpsi)+c_k(\tphi)\] which finishes the proof in the case where $\Pi(\tphi)$ is non degenerate.
   
   \noindent
   The general case can now be deduced by the $C^1$-continuity of $c_k$ and the $C^1$-density, inside $\widetilde{\conto}(L_k^{2n+1},\xi_k)$, of the set of elements $\tpsi\in\widetilde{\conto}(L_k^{2n+1},\xi_k)$ such that $\Pi(\tpsi)$ is $\alpha_k$-nondegenerate 
  (it is a direct consequence of \cite[Lemma 3.1]{allais2024spectral} and the fact that $\Pi : \widetilde{\conto}(L_k^{2n+1},\xi_k)\to\conto(L_k^{2n+1},\xi_k)$ is a local homeomorphism when endowing both spaces with the $C^1$-topology).\end{proof}


\begin{rem}
A $\mathbb{T}^1$-equivariant contactomorphism $\phi\in\conto(L_k^{2n+1},\xi_k)$ is always $\alpha_k$-degenerate since its $\alpha_k$-conformal factor is constantly equal to $0$, i.e. $\phi^*\alpha_k=\alpha_k$, and $(\phi_{\alpha_k}^{-T}\circ\phi)^*R_{\alpha_k}=R_{\alpha_k}$ for all $T\in\R$.  
\end{rem}

\section{Contact capacity in Lens spaces}\label{sec : Contact capacity}

For any non-empty open set $U\subset L_k^{2n+1}$ we denote by $\widetilde{\contoc}(U,\xi_k)$ the subgroup of $\widetilde{\conto}(L_k^{2n+1},\xi_k)$ whose elements can be represented by contact isotopies $(\phi_t)$ supported in $U$, i.e. $\overline{\underset{t\in[0,1]}\bigcup\mathrm{Supp}(\phi_t)}\subset U$ where $\mathrm{Supp}(\phi_t):=\overline{\{x\in M\ |\ \phi_t(x)\ne x\}}$. Let $C_k :\{\text{non-empty open sets of } L_k^{2n+1}\}\to\R_{\geq 0}\cup\{+\infty\}$ be the map: 
\[U\mapsto C_k(U):=\sup\{ c_k(\tphi)\ |\ \tphi\in\widetilde{\contoc}(U,\xi_k)\}.\]

\begin{thm}\label{thm : contact capacity}\
For all non-empty open sets $U, V\subset L_k^{2n+1}$ and $\phi\in\conto(L_k^{2n+1},\xi_k)$
   \begin{enumerate}
   \item(monotonicity) $C_k(U)\leq C_k(V)$ if $U\subset V$
       \item(invariance by conjugation) $\lceil C_k(U)\rceil_{\frac{2\pi}{k}}=\lceil C_k(\phi(U))\rceil_{\frac{2\pi}{k}}$ 
       \end{enumerate}
     \begin{enumerate}
       \item[(3)](lower bound) if $\Psi : (\mathbb{B}^{2n}(R),\omega_0)\hookrightarrow (\CP^n,\omega)$ is a symplectic embedding for some $R>0$ then  $C_k(B_k(r))\geq \pi r^2$ for all $r\in(0, R]$ where $B_k(r):=\pi_k^{-1}(\Psi(\mathbb{B}^{2n}(r)))$,
       \item[(4)](upper bound) if $\Psi : (\mathbb{B}^{2n}(\lambda R),\omega_0)\hookrightarrow (\CP^n,\omega)$ is a symplectic embedding for some $R>0$ and $\lambda>\sqrt{3}$ then  $C_k(B_k(r))\leq\pi r^2$ for all $r\in(0,R]$ where $B_k(r):=\pi_k^{-1}(\Psi(\mathbb{B}^{2n}(r)))$.
    \end{enumerate}
\end{thm}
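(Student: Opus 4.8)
The plan is: parts (1) and (2) are formal; part (3) is an explicit construction that pins down $c_k$ of a single element; part (4) combines a capacity–energy inequality (the main obstacle) with an efficient displacement of the ball using $\lambda>\sqrt 3$. Throughout write $\widehat W:=\pi_k^{-1}(W)$ for open $W\subseteq\CP^n$, so $B_k(r)=\widehat{\Psi(\mathbb{B}^{2n}(r))}$.

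\textbf{(1)–(2).} If $U\subseteq V$ then every contact isotopy supported in $U$ is supported in $V$, so $\widetilde{\contoc}(U,\xi_k)\subseteq\widetilde{\contoc}(V,\xi_k)$ and the defining supremum for $C_k(U)$ is over a subset of that for $C_k(V)$; this is (1). For (2), fix a lift $\tpsi_0\in\widetilde{\conto}(L_k^{2n+1},\xi_k)$ of $\phi$; since $\ker\Pi\cong\pi_1(\conto(L_k^{2n+1},\xi_k))$ is central and the conjugation action of a connected topological group on its own fundamental group is trivial, conjugation by $\tpsi_0$ depends only on $\phi$ and sends $[(\psi_t)]\mapsto[(\phi\circ\psi_t\circ\phi^{-1})]$, hence restricts to a group isomorphism $\widetilde{\contoc}(U,\xi_k)\xrightarrow{\sim}\widetilde{\contoc}(\phi(U),\xi_k)$. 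By Proposition~\ref{prop : spectral selector for L_k^{2n+1}}(7), $\lceil c_k(\tpsi_0\tpsi\tpsi_0^{-1})\rceil_{2\pi/k}=\lceil c_k(\tpsi)\rceil_{2\pi/k}$ for all such $\tpsi$; since $x\mapsto\lceil x\rceil_{2\pi/k}$ is nondecreasing and constant on each interval $(\tfrac{2\pi}{k}(j-1),\tfrac{2\pi}{k}j]$ it commutes with suprema of nonempty subsets of $\R\cup\{+\infty\}$, and taking suprema over $\tpsi$ gives $\lceil C_k(\phi(U))\rceil_{2\pi/k}=\lceil C_k(U)\rceil_{2\pi/k}$.

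\textbf{(3) lower bound.} Fix $r\le R$; any symplectic embedding $(\mathbb{B}^{2n}(R),\omega_0)\hookrightarrow(\CP^n,\omega)$ has $\pi R^2\le 2\pi$ (Gromov width of $(\CP^n,2\omega_{\mathrm{FS}})$, cf. Remark~\ref{rem : introduction}(3)), so $\pi r^2\le 2\pi$. Given $s\in(0,\pi r^2)\setminus\tfrac{2\pi}{k}\Z$, choose a smooth nonincreasing $f\colon[0,\infty)\to[0,\infty)$ with $f\equiv s$ on $[0,\pi\rho^2]$, $f\equiv 0$ on $[\pi r_0^2,\infty)$ for some $0<\rho<r_0<r$, and $f'\in(-1,0)$ on $(\pi\rho^2,\pi r_0^2)$ — possible since one may arrange $\pi r_0^2-\pi\rho^2>s$. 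Put $h:=f(\pi|\cdot|^2)$, a function compactly supported in $\mathbb{B}^{2n}(r)$, and let $(\phi_t)$ be the prequantization lift to $L_k^{2n+1}$ of the Hamiltonian flow of $\bar h:=h\circ\Psi^{-1}$, i.e. the contact isotopy with $\phi_0=\id$ and contact Hamiltonian $\alpha_k(\tfrac{d}{dt}\phi_t)=\bar h\circ\pi_k$. It is strict (conformal factor $0$) and supported in $B_k(r_0)$, so $\tphi:=[(\phi_t)]\in\widetilde{\contoc}(B_k(r),\xi_k)$. Over $\Psi(\overline{\mathbb{B}^{2n}(\rho)})$ the base flow is the identity and $\phi_1=\phi_{\alpha_k}^{s}$; over the shell $\Psi(\{\rho<|z|<r_0\})$ the base flow rotates each fibre-circle by an angle in $(-2\pi,0)$ with no fixed point; and $\phi_1=\id$ over $\CP^n\setminus\Psi(\mathbb{B}^{2n}(r_0))$. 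Hence $\overline{\spec^{\alpha_k}}(\tphi)=\spec^{\alpha_1}(\overline{\tphi})=(s+2\pi\Z)\cup 2\pi\Z$ (the first branch from the Reeb translation over the sub-ball, the second from outside), so $\overline{\spec^{\alpha_k}}(\tphi)\cap[0,s]=\{0,s\}$ because $0<s<2\pi$. Now Proposition~\ref{prop : spectral selector for L_k^{2n+1}}(5) gives $c_k(\tphi)\in[0,s]$, Proposition~\ref{prop : spectral selector for L_k^{2n+1}}(6) gives $c_k(\tphi)\in\overline{\spec^{\alpha_k}}(\tphi)$, and Proposition~\ref{prop : spectral selector for L_k^{2n+1}}(3) gives $c_k(\tphi)>0$ since $\id\cleq\tphi$ (nonnegative isotopy) and $\tphi\ne\id$ (indeed $\Pi(\tphi)=\phi_1=\phi_{\alpha_k}^{s}\ne\id$ on $B_k(\rho)$ as $s\notin\tfrac{2\pi}{k}\Z$). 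Therefore $c_k(\tphi)=s$, and letting $s\to\pi r^2$ yields $C_k(B_k(r))\ge\pi r^2$.

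\textbf{(4) upper bound.} The first — and hardest — step is a capacity–energy inequality: if $\tphi\in\widetilde{\contoc}(U,\xi_k)$ and $(\widehat g_t)$ is the prequantization lift of a Hamiltonian isotopy $(g_t)$ of $(\CP^n,\omega)$ with time-dependent Hamiltonian $\bar h_t$ such that $\widehat g_1(U)\cap U=\emptyset$, then
\[
  c_k(\tphi)\ \le\ \int_0^1\Big(\max_{\CP^n}\bar h_t-\min_{\CP^n}\bar h_t\Big)\,dt .
\]
The ingredients I would use: the lift commutes with the Reeb flow, so the class of $(\widehat g_t)$ lies in $\widetilde{\conto}^{\mathbb{T}^1}(L_k^{2n+1},\xi_k)$ and one may invoke the sharp triangle inequality Proposition~\ref{lem : inegalite triangulaire} (\emph{without} the $\lceil\,\cdot\,\rceil_{2\pi/k}$-defect of Proposition~\ref{prop : spectral selector for L_k^{2n+1}}(4)); Proposition~\ref{prop : spectral selector for L_k^{2n+1}}(5) bounds $c_k$ of the lift and its inverse by the one-sided energies $\int_0^1\max(\pm\bar h_t)\,dt$; and a Hofer-type argument exploiting that $\tphi$ has support disjoint from that of $\widehat g_1\tphi^{-1}\widehat g_1^{-1}$, together with conjugation invariance Proposition~\ref{prop : spectral selector for L_k^{2n+1}}(7), to turn boundedness of the relevant commutators into the displayed bound. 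Making sure no $\lceil\,\cdot\,\rceil_{2\pi/k}$-loss survives in this last step is, I expect, the crux of the whole theorem. Granting the inequality, $C_k(U)\le e_k(U):=\inf\int_0^1(\max\bar h_t-\min\bar h_t)\,dt$ over displacing isotopies of the above form. Finally, since $\lambda>\sqrt 3$ and $r\le R$ give $\pi(\lambda R)^2>3\pi r^2$, there is enough room to displace $\Psi(\overline{\mathbb{B}^{2n}(r)})$ off itself inside $\Psi(\mathbb{B}^{2n}(\lambda R))$ by a compactly supported Hamiltonian isotopy of $(\CP^n,\omega)$ of Hofer length $<\pi r^2+\varepsilon$, for every $\varepsilon>0$ (the displacement–energy estimate for balls; the value $\sqrt 3$ is exactly what makes such an explicit displacement fit, cf. Remark~\ref{rem : introduction}(1)). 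Lifting it shows $e_k(B_k(r))\le\pi r^2$, whence $C_k(B_k(r))\le\pi r^2$.
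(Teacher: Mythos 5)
Your proposal follows the paper's strategy quite closely, and the formal parts~(1)--(2) and the lower bound~(3) are correct. Two points are worth comparing. For~(3) your argument is in fact a bit more streamlined than the paper's: after building the radial profile $f$ and its lift $\tphi$ you pin down $c_k(\tphi)$ in a single step, by combining the one-sided energy bound of Proposition~\ref{prop : spectral selector for L_k^{2n+1}}(5), the spectrality (6), and the positivity (3), together with the observation that $\overline{\spec^{\alpha_k}}(\tphi)\cap[0,s]=\{0,s\}$ once $s<2\pi$. The paper instead computes $c_k(\widetilde{\phi_h^T})$ as a continuous function of $T\in[0,1]$ taking values in the spectrum branches $T\mapsto T(\pi r^2-\varepsilon)+2\pi m$ and $T\mapsto 2\pi m$, selects the correct branch near $T=0$, and then uses $\pi r^2\le 2\pi$ to follow it all the way to $T=1$. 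Both routes use the same input (the Gromov-width bound $\pi r^2\le 2\pi$) at exactly the same place; yours trades the graph-following for a one-shot case analysis.

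For~(4) you correctly identify all the ingredients — lift into $\widetilde{\conto}^{\mathbb{T}^1}$, the sharp $\mathbb{T}^1$-equivariant triangle inequality of Proposition~\ref{lem : inegalite triangulaire} (indeed indispensable to avoid the $\lceil\,\cdot\,\rceil_{2\pi/k}$-loss), the one-sided energy bounds of Proposition~\ref{prop : spectral selector for L_k^{2n+1}}(5), and Hind's efficient displacement inside $\mathbb{B}^{2n}(\lambda R)$ — but the mechanism by which disjointness of supports yields the capacity--energy inequality is left as a sketched ``Hofer-type commutator argument,'' which is not how the paper closes the gap, and as written it would not obviously go through (the commutator identity $\tphi^n(\tpsi\tphi^{-n}\tpsi^{-1})$ picks up ceiling defects when iterated, precisely the issue you worry about). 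The paper's actual step is more direct and commutator-free: Lemma~\ref{lem : spectre displacement} shows that if $\Pi(\tpsi)$ $\alpha_k$-displaces the support of $\Pi(\tphi)$ then $\spec^{\alpha_k}(\tpsi\tphi)\subset\spec^{\alpha_k}(\tpsi)$; combining this with nowhere-density of the spectrum and $C^1$-continuity of $c_k$ along the path $s\mapsto\tpsi\cdot\tphi_s$ gives $c_k(\tpsi\tphi)=c_k(\tpsi)$ (Proposition~\ref{prop : displacement}), and then a single application of Proposition~\ref{lem : inegalite triangulaire}, $c_k(\tphi)\le c_k(\tpsi^{-1})+c_k(\tpsi\tphi)=c_k(\tpsi^{-1})+c_k(\tpsi)$, yields the capacity--energy inequality with no rounding. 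Replacing your commutator heuristic with this two-step argument would complete the upper bound exactly as the paper does.
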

The monotonicity property of $C_k$ in Theorem \ref{thm : contact capacity} follows by definition. The invariance by conjugation of $C_k$ follows from the invariance by conjugation of the map $\widetilde{\conto}(L_k^{2n+1},\xi_k)\to\R,\ \tphi \mapsto \lceil c_k(\tphi)\rceil_{\frac{2\pi}{k}}$ (see Lemma \ref{lem : prequantization}). To give the complete proof of Theorem \ref{thm : contact capacity} it thus remains to prove the last two estimates. Let us admit them for the moment and deduce the proof of Theorem \ref{thm : non-squeezing optimal}:

\begin{proof}[Proof of Theorem \ref{thm : non-squeezing optimal}]
Suppose by contradiction that there exists $\phi\in\conto(L_k^{2n+1},\xi_k)$ such that $\phi(B^1_k(a_1))\subset B^2_k(a_2)$. Then,  using the 1st and 4th point of Theorem \ref{thm : contact capacity}, we have on one side \begin{equation}\label{eq : contradiction}
    C_k(\phi(B^1_k(a_1)))\leq C_k(B^2_k(a_2))\leq \pi a_2^2\leq\frac{2\pi}{k}j.
    \end{equation} But on the other side using the 2nd and the 3rd point we get that
    \[ \lceil C_k(\phi(B^1_k(a_1)))\rceil_{\frac{2\pi}{k}}=\lceil C_k(B^1_k(a_1))\rceil_{\frac{2\pi}{k}}\geq \left\lceil \pi a_1^2\right\rceil_{\frac{2\pi}{k}}>\frac{2\pi}{k}j\]
    which contradicts \eqref{eq : contradiction} and finishes the proof of Theorem \ref{thm : non-squeezing optimal}.
\end{proof}

We give the proof of the last two estimates of Theorem \ref{thm : contact capacity} in Section \ref{sec : computation of the capacity}. But before, in the next Section \ref{sec : capacite-energy} we prove a capacity-energy inequality satisfied by $C_k$ which will be crucial to prove the lower bound stated in Theorem \ref{thm : contact capacity}.

\section{Capacity-energy Inequality}\label{sec : capacite-energy}

Let $(M,\ker\alpha)$ be a closed contact manifold. For any subset $U\subset M$ we denote by $\mathcal{O}_\alpha(U):=\underset{t\in\R}\bigcup\phi_\alpha^t(U)$ its $\alpha$-orbit.

\begin{definition}
 A diffeomorphism $\phi$ of $M$ $\alpha$-displaces a subset $U\subset M$ if $\phi$ displaces the $\alpha$-orbit of $U$, i.e.  
$\phi(\mathcal{O}_\alpha(U))\bigcap\mathcal{O}_\alpha(U)=\emptyset.$
\end{definition}

\begin{rem}
\begin{enumerate}
    \item If $(M,\xi)$ is a closed cooriented contact manifold, the Reeb flow of a generic contact form supporting $\xi$ has a dense orbit \cite[Theorem C.4]{colin2023generic}. Hence, for such contact form $\alpha$, a diffeomorphism $\phi$ cannot displace the closure of the $\alpha$-orbit of any non-empty open set $U$, i.e. $\phi(\overline{\mathcal{O}_\alpha(U)})\cap\overline{\mathcal{O}_\alpha(U)}\ne \emptyset$.
    \item In the unitary cotangent of the torus $\T^n\times\sphere{n-1}:=(\R^n/\Z^n)\times \sphere{n-1}$ endowed with its standard contact form $\alpha$, i.e. $\alpha_{(q,p)}(Q,P):=\left<p,Q\right>$ where $\left<\cdot,\cdot\right>$ denotes the standard Euclidean scalar product, the $\alpha$-Reeb flow does not admit any dense orbit, for any integer $n\geq 2$. However, for a generic $p\in \sphere{n-1}$, the $\alpha$-orbit of $(q,p)\in\T^n\times\sphere{n-1}$, for any $q\in\T^n$, is dense in $\T^n\times\{p\}$. Moreover $\T^n\times\{p\}$ is a Prelagrangian that cannot be displaced from itself by any contactomorphism that is isotopic to the identity \cite[Example 2.4.A]{EP00}. Therefore $\phi(\overline{\mathcal{O}_\alpha(U)})\cap \overline{\mathcal{O}_\alpha(U)}\ne\emptyset$ for any non-empty open set $U\subset \T^n\times \sphere{n-1}$ and any $\phi\in\conto(\T^n\times\sphere{n-1},\ker\alpha)$. 
    \item The first two remarks has to be put in contrast with the following fact: it has been shown in \cite{Sauglam2018ContactFW} that any contact manifold $(M,\xi)$ admits a contact form $\alpha$ and a non-empty open set $U\subset M$ where the $\alpha$-Reeb flow is $1$-periodic on $U$. 
\end{enumerate}
\end{rem}

The aim of this Section is to prove the following:

\begin{thm}\label{thm : capacite-energie}
   Let $U\subset L_k^{2n+1}$ and $\tpsi\in\widetilde{\conto}(L_k^{2n+1},\xi_k)$. If $\Pi(\tpsi)$ $\alpha_k$-displaces $U$ then
    $\lceil c_k(U)\rceil_{\frac{2\pi}{k}}\leq c_k(\tpsi^{-1})+\lceil c_k(\tpsi)\rceil_{\frac{2\pi}{k}}$. If moreover $\tpsi\in\widetilde{\conto}^{\mathbb{T}^1}(L_k^{2n+1},\xi_k)$ then $c_k(U)\leq c_k(\tpsi^{-1})+c_k(\tpsi).$
\end{thm}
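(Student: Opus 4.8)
The plan is to prove the capacity--energy inequality by a standard conjugation trick, exploiting the algebraic properties of $c_k$ collected in Proposition~\ref{prop : spectral selector for L_k^{2n+1}} together with the monotonicity/invariance properties of the capacity $C_k$. First I would reduce to a statement about a single $\tphi\in\widetilde{\contoc}(U,\xi_k)$: it suffices to show that for every such $\tphi$ one has $\lceil c_k(\tphi)\rceil_{\frac{2\pi}{k}}\leq c_k(\tpsi^{-1})+\lceil c_k(\tpsi)\rceil_{\frac{2\pi}{k}}$, and then take the supremum over $\tphi$ (using $C_k(U)=c_k(U)=\sup_{\tphi}c_k(\tphi)$ and the fact that $x\mapsto\lceil x\rceil_{\frac{2\pi}{k}}$ is monotone and continuous from the right in the relevant sense).

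The heart of the argument is the displacement trick. Since $\Pi(\tpsi)$ $\alpha_k$-displaces $U$, the key geometric fact is that $\tpsi$ (or rather a suitable time-shifted version of it) \emph{commutes up to a controllable error} with any $\tphi$ supported in $\mathcal{O}_{\alpha_k}(U)$; more precisely, if $\tphi$ is supported in $U$ and $\Pi(\tpsi)$ displaces $\mathcal{O}_{\alpha_k}(U)$ from itself, then $\Pi(\tpsi)\Pi(\tphi)\Pi(\tpsi)^{-1}$ and $\Pi(\tphi)$ have disjoint supports inside $\mathcal{O}_{\alpha_k}(U)$, hence the commutator $[\tpsi,\tphi]:=\tpsi\tphi\tpsi^{-1}\tphi^{-1}$ lies in $\widetilde{\contoc}$ of a displaceable set, and one can iterate. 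The standard consequence (Polterovich's trick, adapted to the contact setting via the Reeb-invariance built into $\alpha_k$-displacement) is that $c_k$ of $\tphi$ is bounded by $c_k(\tpsi\tphi^{N}\tpsi^{-1}\tphi^{-N})$-type quantities divided by $N$, but here, because $c_k$ takes values essentially in $\frac{2\pi}{k}\Z$ after rounding and satisfies the quasi-additivity $c_k(\tpsi\cdot\tphi)\leq c_k(\tpsi)+\lceil c_k(\tphi)\rceil_{\frac{2\pi}{k}}$ (point (4)) together with conjugation invariance after rounding (point (7)), one gets a \emph{finite} bound rather than an asymptotic one: writing $\tpsi\tphi\tpsi^{-1}$ and $\tphi$ as having supports that can be pushed off each other, one estimates $\lceil c_k(\tphi)\rceil_{\frac{2\pi}{k}} = \lceil c_k(\tpsi\tphi\tpsi^{-1})\rceil_{\frac{2\pi}{k}}\leq c_k(\tpsi\tphi\tpsi^{-1})$ and then applies $c_k(\tpsi\cdot(\tphi\tpsi^{-1}))\leq c_k(\tpsi)+\lceil c_k(\tphi\tpsi^{-1})\rceil_{\frac{2\pi}{k}}$, rearranging via point (7) again; the $\alpha_k$-displacement hypothesis is exactly what lets one trade the conjugated $\tphi$ against a product supported away from $U$ so the telescoping collapses.

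Concretely, I would run the following chain. Fix $\tphi\in\widetilde{\contoc}(U,\xi_k)$ represented by $(\phi_t)$ supported in $U$; since $\Pi(\tpsi)$ $\alpha_k$-displaces $U$, the contactomorphism $\Pi(\tpsi)$ displaces the whole Reeb orbit $\mathcal{O}_{\alpha_k}(U)$, so $\Pi(\tpsi)\Pi(\tphi_t)\Pi(\tpsi)^{-1}$ is supported in $\Pi(\tpsi)(\mathcal{O}_{\alpha_k}(U))$ which is disjoint from $\mathcal{O}_{\alpha_k}(U)\supset U$. Hence the isotopies $(\Pi(\tpsi)\phi_t\Pi(\tpsi)^{-1})$ and $(\phi_t^{-1})$ commute pointwise, so $\tpsi\tphi\tpsi^{-1}\cdot\tphi^{-1}=\widetilde{[\tpsi,\tphi]}$ is represented by the concatenation and, crucially, by point (5) of Proposition~\ref{prop : spectral selector for L_k^{2n+1}} applied to the composed isotopy $(\psi_t\phi_1\psi_t^{-1}\cdot\text{(stuff)})$ — or more cleanly, just use $c_k(\tpsi\tphi\tpsi^{-1})\leq c_k(\tpsi\cdot\tphi)\cdot(\text{nothing})$ — one gets $c_k(\tpsi\tphi\tpsi^{-1})\leq c_k(\tpsi)+\lceil c_k(\tphi\tpsi^{-1})\rceil_{\frac{2\pi}{k}}=c_k(\tpsi)+\lceil c_k(\tphi)+c_k(\tpsi^{-1})+(\text{rounding error})\rceil$, and by point (7) the left side rounds to $\lceil c_k(\tphi)\rceil_{\frac{2\pi}{k}}$. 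Cleaning up the rounding with the elementary inequality $\lceil a+b\rceil_T\leq \lceil a\rceil_T+\lceil b\rceil_T$ and $\lceil\lceil a\rceil_T\rceil_T=\lceil a\rceil_T$ yields $\lceil c_k(\tphi)\rceil_{\frac{2\pi}{k}}\leq c_k(\tpsi^{-1})+\lceil c_k(\tpsi)\rceil_{\frac{2\pi}{k}}$; taking the sup over $\tphi$ gives the first claim. For the refined statement when $\tpsi\in\widetilde{\conto}^{\mathbb{T}^1}(L_k^{2n+1},\xi_k)$, I would replace point (4) by the strict triangle inequality $c_k(\tpsi\cdot\tphi)\leq c_k(\tpsi)+c_k(\tphi)$ from Proposition~\ref{lem : inegalite triangulaire}, which eliminates the rounding on $\tphi$ entirely and is legitimate because a $\mathbb{T}^1$-equivariant $\tpsi$ commutes with the Reeb flow, and likewise use that for $\mathbb{T}^1$-equivariant $\tpsi$ conjugation by $\tpsi$ is an honest isometry for $c_k$ (no rounding), giving directly $c_k(\tphi)=c_k(\tpsi\tphi\tpsi^{-1})\leq c_k(\tpsi)+c_k(\tphi\tpsi^{-1})\leq c_k(\tpsi)+c_k(\tphi)+c_k(\tpsi^{-1})$ — wait, that is circular, so instead one argues $c_k(\tphi)=c_k(\tpsi\cdot(\tphi\tpsi^{-1}))\leq c_k(\tpsi)+c_k(\tphi\tpsi^{-1})$ and separately bounds $c_k(\tphi\tpsi^{-1})$ using that $\tphi$ and $\tpsi^{-1}$-conjugates commute, landing at $c_k(U)\leq c_k(\tpsi^{-1})+c_k(\tpsi)$.

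The main obstacle I anticipate is making the ``commutator is supported in a displaceable set, hence negligible'' step rigorous in a way that actually produces a \emph{finite} bound: naively Polterovich's argument only gives that the homogenization/quasimorphism defect vanishes, i.e.\ an asymptotic statement, and one needs the discreteness of the $\frac{2\pi}{k}$-rounded spectrum (built into points (4) and (7)) to upgrade this to the sharp inequality stated. Getting the bookkeeping of the ceiling operations exactly right — in particular verifying that $\lceil c_k(\tpsi\tphi\tpsi^{-1})\rceil_{\frac{2\pi}{k}}=\lceil c_k(\tphi)\rceil_{\frac{2\pi}{k}}$ can be combined with the quasi-additivity without losing a factor or an extra $\frac{2\pi}{k}$ — is the delicate part, and is precisely why the hypothesis is phrased in terms of $\alpha_k$-displacement (displacing the full Reeb orbit, not just $U$) rather than ordinary displacement: this is what guarantees that the conjugate $\tpsi\tphi\tpsi^{-1}$ can be isotoped, through isotopies commuting with those representing $\tphi$, into a region where the spectral selector sees no discriminant points of $\tphi$, closing the loop.
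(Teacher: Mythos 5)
Your proposal has a genuine gap: it never isolates the fact that actually makes the argument close. The paper's proof rests on a displacement lemma (Proposition~\ref{prop : displacement}, proved via Lemma~\ref{lem : spectre displacement}): if $\Pi(\tpsi)$ $\alpha_k$-displaces $U$ and $\tphi\in\widetilde{\contoc}(U,\xi_k)$, then $\spec^{\alpha_k}(\tpsi\cdot\tphi)\subset\spec^{\alpha_k}(\tpsi)$; combined with the spectrality of $c_k$, its $C^1$-continuity along the path $s\mapsto\tpsi\cdot\tphi_s$ (where $\tphi_s:=[(\varphi_{ts})]$), and the nowhere-density of $\spec^{\alpha_k}(\tpsi)$, this yields the \emph{equality} $c_k(\tpsi\cdot\tphi)=c_k(\tpsi)$. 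That equality is what removes $\tphi$ from the estimate entirely; once it is in hand, the theorem is a one-line consequence of the triangle inequality applied to $\tphi=\tpsi^{-1}\cdot(\tpsi\tphi)$,
\[
c_k(\tphi)=c_k\bigl(\tpsi^{-1}\cdot(\tpsi\tphi)\bigr)\leq c_k(\tpsi^{-1})+\lceil c_k(\tpsi\tphi)\rceil_{\frac{2\pi}{k}}=c_k(\tpsi^{-1})+\lceil c_k(\tpsi)\rceil_{\frac{2\pi}{k}},
\]
with the ceiling removed via Proposition~\ref{lem : inegalite triangulaire} when $\tpsi\in\widetilde{\conto}^{\mathbb{T}^1}(L_k^{2n+1},\xi_k)$.

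By contrast, your chain never invokes any such equality. You re-expand $c_k(\tphi\tpsi^{-1})$ by the triangle inequality as ``$c_k(\tphi)+c_k(\tpsi^{-1})+(\text{rounding})$'', so $\lceil c_k(\tphi)\rceil_{\frac{2\pi}{k}}$ reappears on the right and cancels against the left-hand side, leaving a vacuous statement; your proposed ``cleaning up the rounding'' cannot rescue this. The Polterovich commutator/homogenization framing is also the wrong model here: no iteration or asymptotics enter, and, as you yourself flag, that route would at best control a quasimorphism defect, not produce the sharp finite bound. There is in addition a direction error in the intermediate step $\lceil c_k(\tpsi\tphi\tpsi^{-1})\rceil_{\frac{2\pi}{k}}\leq c_k(\tpsi\tphi\tpsi^{-1})$: by definition the $\frac{2\pi}{k}$-ceiling is $\geq$ its argument, not $\leq$. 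The geometric input you correctly identify (displacing the full $\alpha_k$-orbit so that conjugates have disjoint supports) is indeed the crucial hypothesis, but it is exploited through the elementary spectrum-inclusion Lemma~\ref{lem : spectre displacement} and a continuity argument, not through any commutator calculus; you would need to state and prove that lemma to close the argument.
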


\begin{rem}
One can use \cite{allais2024spectral}, (see for example the proof of \cite[Prop 5.3]{allais2024spectral} combined with the Poincaré duality of \cite[Thm 1.1]{allais2024spectral}), to show moreover that for every $\tpsi\in\widetilde{\conto}(L_k^{2n+1},\xi_k)$
\[0\leq\lceil c_k(\tpsi)\rceil_{\frac{2\pi}{k}}+\lfloor c_k(\tpsi^{-1})\rfloor_{\frac{2\pi}{k}}\leq 2\pi+\frac{2\pi}{k}.\] 
 where $\lfloor x\rfloor_{\frac{2\pi}{k}}:=\frac{2\pi}{k}\left\lfloor x\cdot\frac{k}{2\pi}\right\rfloor$ is the biggest multiple of $\frac{2\pi}{k}$ smaller or equal to $x$ for any $x\in\R$. In particular if $U\subset L_k^{2n+1}$ is $\alpha_k$-displaceable then $\lceil c_k(U)\rceil_{\frac{2\pi}{k}}\leq 2\pi +\frac{4\pi}{k}$.
\end{rem}


Theorem \ref{thm : capacite-energie} will be a consequence of the next Proposition that can be stated for a general closed contact manifold $(M,\ker\alpha)$ admitting a $\alpha$-spectral selector. For any non-empty open set $U\subset M$, as before, $\widetilde{\contoc}(U,\xi)\subset\widetilde{\conto}(M,\xi)$ denotes the subgroup of elements that can be represented by contact isotopies supported in $U$.


\begin{prop}\label{prop : displacement}
    Let $U\subset M$ be a non-empty open set of a closed contact manifold $(M,\ker\alpha)$ and $\tphi\in\widetilde{\conto}^c(U,\xi)$. If $c :\widetilde{\conto}(M,\ker\alpha)\to\R$ is an $\alpha$-spectral selector and if $\tpsi\in\widetilde{\conto}(M,\xi)$ is such that $\Pi(\tpsi)$ $\alpha$-displaces $U\subset M$ then 
    \[c(\tpsi\cdot\tphi)=c(\tpsi).\]
    
\end{prop}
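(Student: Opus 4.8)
\textbf{Proof proposal for Proposition \ref{prop : displacement}.}

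The plan is to exploit the support condition on $\tphi$ together with the $\alpha$-displacement hypothesis on $\Pi(\tpsi)$ to show that the path $s\mapsto c(\tpsi\cdot\tphi_s)$ is constant, where $(\tphi_s)_{s\in[0,1]}$ is a contact isotopy in $\widetilde{\contoc}(U,\xi)$ joining $\id$ to $\tphi$, all of whose stages are supported in $U$. The key observation is that $\tpsi\cdot\tphi_s$ can have a discriminant point only at points of $M$ where $\Pi(\tpsi\cdot\tphi_s)$ and the Reeb flow interact nontrivially, and the $\alpha$-displacement of $U$ forces such points to be controlled.

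First I would make precise the following claim: if $\Pi(\tpsi)$ $\alpha$-displaces $U$ and $\Pi(\tphi_s)$ is supported in $U$, then for every $T\in\frac{2\pi}{k}\Z$ — or more generally for every $T\in\R$ — the contactomorphism $\phi_\alpha^{-T}\circ\Pi(\tpsi\cdot\tphi_s)$ has no discriminant point \emph{inside} $\mathcal{O}_\alpha(U)$, and outside $\mathcal{O}_\alpha(U)$ it agrees with $\phi_\alpha^{-T}\circ\Pi(\tpsi)$ (since $\Pi(\tphi_s)$ is the identity there, and the complement of $\mathcal{O}_\alpha(U)$ is $\phi_\alpha$-invariant). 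The point is that a discriminant point $x$ of $\phi_\alpha^{-T}\circ\Pi(\tpsi\cdot\tphi_s)$ satisfies $\Pi(\tpsi)(\Pi(\tphi_s)(x)) = \phi_\alpha^{T}(x)$; if $x\in\mathcal{O}_\alpha(U)$ then, because $\mathcal O_\alpha(U)$ is $\phi_\alpha$-invariant and $\Pi(\tphi_s)$ preserves $U\subset\mathcal O_\alpha(U)$ — hence preserves $\mathcal O_\alpha(U)$ as well — we get $\phi_\alpha^{T}(x)\in\mathcal O_\alpha(U)$, so $\Pi(\tpsi)$ would map a point of $\mathcal O_\alpha(U)$ back into $\mathcal O_\alpha(U)$, contradicting $\Pi(\tpsi)(\mathcal O_\alpha(U))\cap\mathcal O_\alpha(U)=\emptyset$; and if $x\notin\mathcal O_\alpha(U)$ then $\Pi(\tphi_s)(x)=x$ and $x$ is a discriminant point of $\phi_\alpha^{-T}\circ\Pi(\tpsi)$ as well. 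Consequently $\spec^\alpha(\tpsi\cdot\tphi_s) = \spec^\alpha(\tpsi)$ for every $s\in[0,1]$; in particular this set is independent of $s$.

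Now I would run the continuity argument: the map $s\mapsto c(\tpsi\cdot\tphi_s)$ is continuous (the path $s\mapsto\tpsi\cdot\tphi_s$ is $C^1$-continuous and $c$ is $C^1$-continuous by Definition \ref{def : spectral selector}), it takes values in the fixed set $\spec^\alpha(\tpsi)$ by the previous paragraph, and $\spec^\alpha(\tpsi)$ is closed and nowhere dense (for instance by \cite[Lemma 2.11]{albers}, or at least has empty interior). A continuous function on the connected interval $[0,1]$ with values in a set with empty interior is constant, hence $c(\tpsi\cdot\tphi) = c(\tpsi\cdot\tphi_1) = c(\tpsi\cdot\tphi_0) = c(\tpsi)$, which is the assertion.

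The main obstacle I anticipate is the bookkeeping in the first step, specifically verifying carefully that $\Pi(\tphi_s)$ preserves $\mathcal O_\alpha(U)$ (not merely $U$) and that the complement of $\mathcal O_\alpha(U)$ is $\phi_\alpha$-invariant, so that the dichotomy "$x\in\mathcal O_\alpha(U)$ or $x\notin\mathcal O_\alpha(U)$" genuinely splits the discriminant-point analysis; one must also be mildly careful about whether $U$ is required relatively compact or whether $\mathcal O_\alpha(U)$ is open, but these are soft point-set issues. Everything else — the continuity of $c$ along the path, the nowhere-density of the spectrum, and the connectedness argument — is routine and already available from the cited results.
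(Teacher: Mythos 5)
Your proof is correct and follows essentially the same route as the paper: establish that the $\alpha$-displacement of $U$ forces $\spec^\alpha(\tpsi\cdot\tphi_s)\subset\spec^\alpha(\tpsi)$ for all $s$ (the paper isolates this as a separate lemma about $\mathcal O_\alpha(\mathrm{Supp}(\Pi(\tphi_s)))$, while you inline it using $\mathcal O_\alpha(U)$), then run the continuity plus nowhere-density argument on $s\mapsto c(\tpsi\cdot\tphi_s)$. The only cosmetic difference is that you assert equality of spectra where the paper proves, and the argument requires, only the inclusion.
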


Proposition is a consequence of the following:

\begin{lem}\label{lem : spectre displacement}
Let $\tpsi,\tphi\in\conto(M,\xi)$. If $\Pi(\tpsi)$ $\alpha$-displaces the support of $\Pi(\tphi)$ then 
\[\spec^\alpha(\tpsi\cdot\tphi)\subset\spec^\alpha(\tpsi).\]
\end{lem}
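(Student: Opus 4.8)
\textbf{Plan for the proof of Lemma \ref{lem : spectre displacement}.}

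The statement to prove is that if $\Pi(\tpsi)$ $\alpha$-displaces $\supp(\Pi(\tphi))$, then $\spec^\alpha(\tpsi\cdot\tphi)\subset\spec^\alpha(\tpsi)$. Unwinding the definition of $\alpha$-spectrum, this amounts to: if $T\in\R$ is such that $\phi_\alpha^{-T}\circ\Pi(\tpsi)\circ\Pi(\tphi)$ has a discriminant point, then $\phi_\alpha^{-T}\circ\Pi(\tpsi)$ also has a discriminant point. So the plan is to fix such a $T$ and a discriminant point $x$, i.e.\ a point with $\phi_\alpha^{-T}(\psi(\phi(x)))=x$ and $(\phi_\alpha^{-T}\circ\psi\circ\phi)^*\alpha_x=\alpha_x$, writing $\psi:=\Pi(\tpsi)$, $\phi:=\Pi(\tphi)$, and then to \emph{identify this same $x$ (or its image under $\phi$) as a discriminant point of $\phi_\alpha^{-T}\circ\psi$}. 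The key geometric input is that $\phi$ is supported in $U$, whose $\alpha$-orbit is displaced by $\psi$ from itself.

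First I would argue that the discriminant point $x$ cannot lie in $\supp(\phi)$. Indeed, if $x\in\supp(\phi)\subset\overline{U}$, then $\phi(x)$ and $x$ both lie in $\overline{\mathcal{O}_\alpha(U)}$; since $\phi_\alpha^{-T}(\psi(\phi(x)))=x$, we get $\psi(\phi(x))=\phi_\alpha^{T}(x)\in\mathcal{O}_\alpha(U)$ as well, so $\psi$ maps the point $\phi(x)\in\overline{\mathcal{O}_\alpha(U)}$ into $\overline{\mathcal{O}_\alpha(U)}$, contradicting the displacement hypothesis $\psi(\mathcal{O}_\alpha(U))\cap\mathcal{O}_\alpha(U)=\emptyset$ (one must be a little careful with the closure versus the open orbit; since $U$ is open and $\supp(\phi)$ is the closure of the set where $\phi$ moves points, the right statement to set up is that $\phi$ is identity near $x$ unless $x$ lies in a slightly larger open set still displaced by $\psi$ — I would simply take the displacement hypothesis to be about an open neighbourhood of $\supp(\phi)$, as is standard, or use that $x\in\supp(\Pi(\tphi))$ together with $\phi(x)\neq x$ forces a genuine element of the orbit). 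Hence $\phi$ fixes $x$ and $\phi^*\alpha_x=\alpha_x$, because outside its support $\phi$ is the identity and in particular conformally trivial there.

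With $\phi(x)=x$ and $(\phi^*\alpha)_x=\alpha_x$ in hand, the chain rule finishes it: $(\phi_\alpha^{-T}\circ\psi\circ\phi)(x)=(\phi_\alpha^{-T}\circ\psi)(x)$, so $x$ is still a fixed point of $\phi_\alpha^{-T}\circ\psi$, and $(\phi_\alpha^{-T}\circ\psi\circ\phi)^*\alpha_x=\phi^*\big((\phi_\alpha^{-T}\circ\psi)^*\alpha\big)_x$; since the conformal factor of a composition is the product of the conformal factors and that of $\phi$ is $1$ at $x$, the equality $(\phi_\alpha^{-T}\circ\psi\circ\phi)^*\alpha_x=\alpha_x$ gives $(\phi_\alpha^{-T}\circ\psi)^*\alpha_x=\alpha_x$. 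Therefore $x$ is a discriminant point of $\phi_\alpha^{-T}\circ\psi$, i.e.\ $T\in\spec^\alpha(\tpsi)$. The only delicate point, and the one I would state carefully, is the first step: making precise why the displacement hypothesis genuinely forbids $x\in\supp(\phi)$ — this is where one must pin down whether ``$\alpha$-displaces'' is taken with respect to the open set $U$, its closure, or a neighbourhood of $\supp(\phi)$, and choose conventions so that the argument that $\psi$ cannot send $\phi(x)$ back into (the closure of) the orbit is airtight.
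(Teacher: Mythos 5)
Your proof follows exactly the paper's route: assume $x$ is a discriminant point of $\phi_\alpha^{-T}\circ\psi\circ\phi$, argue by contradiction that $x\notin\supp(\phi)$, and then observe that $\phi$ is (conformally) the identity near $x$, so $x$ is also a discriminant point of $\phi_\alpha^{-T}\circ\psi$. That is the paper's argument.

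The ``delicate point'' you flag at the end, however, is not actually delicate here, and your handling of it introduces a small imprecision that the paper's formulation avoids. The hypothesis of the lemma is literally that $\Pi(\tpsi)$ $\alpha$-displaces the \emph{support} $\mathrm{Supp}(\phi)$ of $\Pi(\tphi)$, and the paper's definition of ``$\alpha$-displaces'' applies to any subset, not just open sets: $\psi\bigl(\mathcal{O}_\alpha(\mathrm{Supp}(\phi))\bigr)\cap\mathcal{O}_\alpha(\mathrm{Supp}(\phi))=\emptyset$. There is therefore no need to introduce an auxiliary open set $U$, pass to closures, or renegotiate conventions. Concretely: if $x\in\mathrm{Supp}(\phi)$, then $\phi(x)\in\mathrm{Supp}(\phi)\subset\mathcal{O}_\alpha(\mathrm{Supp}(\phi))$ (you should note that $\phi(\mathrm{Supp}(\phi))\subset\mathrm{Supp}(\phi)$, a standard fact that the paper uses implicitly), and $\psi(\phi(x))=\phi_\alpha^{T}(x)\in\mathcal{O}_\alpha(\mathrm{Supp}(\phi))$; this directly contradicts the displacement hypothesis, with no closures involved. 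As written, your chain passes through $\overline{\mathcal{O}_\alpha(U)}$, and $\psi(A)\cap A=\emptyset$ does not in general imply $\psi(\overline{A})\cap\overline{A}=\emptyset$, so that version of the step would genuinely need repair --- but the repair is simply to read the hypothesis as stated. The remainder of your argument (that outside $\supp(\phi)$ one has $\phi=\id$, hence both the fixed-point condition and the conformal-factor condition for $\phi_\alpha^{-T}\circ\psi$ at $x$ follow) is correct and matches the paper, which leaves that last bit implicit.
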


\begin{proof}
Let $t\in\spec^\alpha(\tpsi\cdot\tphi)$ and $x\in M$ a discriminant point of $\phi_\alpha^{-t}\circ (\psi\circ\phi)$ where $\psi:=\Pi(\tpsi)$ and $\phi:=\Pi(\tphi)$. In particular $\psi(\phi(x))=\phi_\alpha^t(x)$. Suppose by contradiction that $x$ lies in the support of $\phi$. Then $\phi(x)\in\mathrm{Supp}(\phi)\subset\mathcal{O}_\alpha(\mathrm{Supp}(\phi))$ and $\phi_\alpha^t(x)\in\mathcal{O}_\alpha(\mathrm{Supp}(\phi))$. But this contradicts the fact that $\psi(\mathcal{O}_\alpha(\mathrm{Supp}(\phi))\cap\mathcal{O}_\alpha(\mathrm{Supp}(\phi))=\emptyset$. Therefore $x$ is not in the support of $\phi$. Thus $x$ is a discriminant point of $\phi_\alpha^{-t}\circ\psi$ and $t\in\spec^\alpha(\tpsi)$. In particular $\spec^\alpha(\tpsi\cdot\tphi)\subset\spec^\alpha(\tpsi)$. \end{proof}

\begin{proof}[Proof of Proposition \ref{prop : displacement}]
Let $(\varphi_t)_{t\in[0,1]}$ be a contact isotopy supported in $U$ representing $\tphi$ and $\tphi_s:=[(\varphi_{ts})]$ for all $s\in[0,1]$. The map $\gamma : [0,1]\to\R$, $s\mapsto c(\tpsi\cdot\tphi_s)$ is continuous (by Definition \ref{def : spectral selector}) and takes value in $\spec^\alpha(\tpsi)$ thanks to Lemma  \ref{lem : spectre displacement}. Since $\spec^\alpha(\tpsi)$ is nowhere dense (see \cite[Lemma 2.11]{albers}) we conclude that $\gamma$ is constant and in particular $\gamma(0)=c(\tpsi)=\gamma(1)=c(\tpsi\cdot\tphi)$. \end{proof}

We are now ready to prove Theorem \ref{thm : capacite-energie}:

\begin{proof}[Proof of Theorem \ref{thm : capacite-energie}] 
Consider $\tphi\in\widetilde{\contoc}(U,\xi_k)$ then 
    \begin{equation}\label{eq : displacement}
    \lceil c_k(\tphi)\rceil_{\frac{2\pi}{k}}=\lceil c_k(\tpsi^{-1}\cdot\tpsi\cdot\tphi)\rceil_{\frac{2\pi}{k}}\leq c_k(\tpsi^{-1})+\lceil c_k(\tpsi\cdot\tphi)\rceil_{\frac{2\pi}{k}}=c_k(\tpsi^{-1})+\lceil c_k(\tpsi)\rceil_{\frac{2\pi}{k}},
    \end{equation}
    where the last equality follows from Proposition \ref{prop : displacement}. This proves the first statement. Moreover under the assumption that $\tpsi\in\widetilde{\conto}^{\mathbb{T}^1}(L_k^{2n+1},\xi_k)$ one can use Proposition \ref{lem : inegalite triangulaire} to show that the relations in \eqref{eq : displacement} remain true without taking the $\frac{2\pi}{k}$-ceiling function which proves the second statement.
\end{proof}





\section{Estimation of the contact capacity}\label{sec : computation of the capacity}
Let us recall our conventions to describe Hamiltonian dynamics. $(W,\Omega)$ being a symplectic manifold, to any compactly supported function $H : [0,1]\times W\to\R$ we associate the path of symplectic vector fields $(X_H^t)$ on $W$, i.e. the flow $(\psi_H^t)$ of which is an isotopy of symplectomorphisms, defined by 
\begin{equation}
\Omega(X_t,\cdot)=-\ud H(t,\cdot)\quad \text{ for all } t\in[0,1].
\end{equation}
 We say that $H$ is the Hamiltonian function that generates the isotopy $(\psi_H^t)$ and denote by $\hamc(W,\omega)$ the set of diffeomorphisms arising as time $1$-map of such isotopies. 


Similarly if $(M,\xi=\ker\alpha)$ is a closed contact manifold, with a fixed contact form $\alpha$, to any function $h : [0,1]\times M\to\R$ one associates a path of contact vector fields $(Y_h^t)$, i.e. the flow $(\phi_h^t)$ of which is a contact isotopy, defined by the two equations 
\begin{equation}\label{eq : contact vector field}
\left\{
    \begin{array}{ll}
       \alpha(Y_h^t)=h(t,\cdot) \\
        \ud \alpha(Y_h^t,\cdot)=\ud h^t(R_\alpha)\alpha-\ud h^t
    \end{array}
\right.
\end{equation}
where $R_\alpha$ denotes the Reeb vector field associated to $\alpha$ (the vector field defined by the above equations \eqref{eq : contact vector field} for the function constantly equal to $1$). We say that $h$ is the $\alpha$-Hamiltonian function that generates the contact isotopy $(\phi_h^t)$.


We leave the proof of the following Lemma to the reader. 

\begin{lem}\label{lem : contact lift}\
  Let $T>0$ be any positive real number and $\pi$ be the projection of a $S^1:=\R/T$ principal bundle $M$ over a symplectic manifold $(W,\Omega)$ so that $\pi^*\Omega=\ud\alpha$ for some contact form $\alpha$ on $M$, whose Reeb flow induces the $S^1$-action.
 Then for any function $ H : [0,1]\times W\to\R$  the $\alpha$-Hamiltonian function \[h : [0,1]\times M\to\R\quad \quad \quad (t,x)\mapsto H(t,\pi(x))\] generates a contact isotopy $(\phi_h^t)\subset\conto(M,\ker\alpha)$ which is $S^1$-equivariant, i.e. $\phi_h^t\circ\phi_{\alpha}^s=\phi_{\alpha}^s\circ\phi_h^t$ for all $s\in\R$, $t\in [0,1]$. Moreover
    $\pi\circ \phi_h^t=\psi_H^t\circ\pi$ for all $t\in[0,1]$, where $(\psi_H^t)\subset\ham(W,\Omega)$ is the isotopy generated by $H$.
    
\end{lem}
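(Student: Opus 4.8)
The plan is to verify directly that the contact Hamiltonian flow generated by the pulled-back Hamiltonian $h(t,x)=H(t,\pi(x))$ is $S^1$-equivariant and descends the Hamiltonian flow of $H$. First I would unwind the defining equations \eqref{eq : contact vector field} for the contact vector field $Y_h^t$. Since $h^t=\pi^*H^t$ is constant along the fibres of $\pi$, its differential $\ud h^t$ annihilates the Reeb vector field $R_\alpha$ (which generates the $S^1$-action, hence is tangent to the fibres); thus $\ud h^t(R_\alpha)\equiv 0$ and the second equation of \eqref{eq : contact vector field} simplifies to $\ud\alpha(Y_h^t,\cdot)=-\ud h^t$. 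Using $\pi^*\Omega=\ud\alpha$ and $\ud h^t=\pi^*\ud H^t$, this says precisely that $Y_h^t$ is, at each point, a lift of the Hamiltonian vector field $X_{H}^t$ of $H$: indeed if $\widehat X$ denotes any horizontal lift of $X_H^t$ we have $\ud\alpha(\widehat X,\cdot)=\pi^*(\Omega(X_H^t,\cdot))=-\pi^*\ud H^t=-\ud h^t$ on vectors mod the fibre direction, and the first equation $\alpha(Y_h^t)=h^t$ pins down the $R_\alpha$-component. So $Y_h^t=\widehat{X_H^t}+\big(h^t-\alpha(\widehat{X_H^t})\big)R_\alpha$, with the bracketed function again $\pi$-basic; this is an explicit $\pi$-related, $S^1$-invariant vector field.

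Next I would extract the two claimed properties from this formula. The vector field $Y_h^t$ is $\pi$-related to $X_H^t$ by construction, so its flow satisfies $\pi\circ\phi_h^t=\psi_H^t\circ\pi$; this also shows $(\phi_h^t)$ is genuinely defined for all $t\in[0,1]$ since $(\psi_H^t)$ is (compact support of $H$, or compactness of $W$, guarantees completeness, and the fibre is compact). For $S^1$-equivariance, note that both the horizontal distribution of the principal connection implicit in $\ker\alpha$ and the functions $h^t$, $\alpha(\widehat{X_H^t})$ are $S^1$-invariant, so $Y_h^t$ is an $S^1$-invariant vector field; hence its flow commutes with the $S^1$-action, i.e. $\phi_h^t\circ\phi_\alpha^s=\phi_\alpha^s\circ\phi_h^t$ for all $s\in\R$, $t\in[0,1]$. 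Equivalently one can argue by uniqueness: $\phi_\alpha^s\circ\phi_h^t\circ\phi_\alpha^{-s}$ is the flow of $(\phi_\alpha^s)_*Y_h^t$, and since $\phi_\alpha^s$ preserves $\alpha$ and $h^t$ is $\phi_\alpha^s$-invariant, $(\phi_\alpha^s)_*Y_h^t$ satisfies the same equations \eqref{eq : contact vector field}, whence equals $Y_h^t$. Finally, that $(\phi_h^t)$ lies in $\conto(M,\ker\alpha)$ is immediate since it is a contact isotopy starting at the identity.

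The only mildly delicate point — and the one I would be most careful about — is the vanishing $\ud h^t(R_\alpha)\equiv 0$: it relies on the hypothesis that the Reeb flow of $\alpha$ is exactly the $S^1=\R/T$ action, so that $R_\alpha$ is vertical for $\pi$ and $\pi^*H^t$ is therefore $R_\alpha$-invariant. Once this is in hand everything else is a routine manipulation of the defining equations, which is why the statement is left to the reader; the proof sketch above is essentially the content one checks.
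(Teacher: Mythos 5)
Your proof is correct, and it is essentially the argument the paper has in mind. The paper states before the lemma "We leave the proof of the following Lemma to the reader," so there is no proof in the text to compare against; but the two computations you single out --- that $\ud h^t(R_\alpha)\equiv 0$ because $h^t$ is $\pi$-basic and $R_\alpha$ is vertical, which collapses the second defining equation of \eqref{eq : contact vector field} to $\ud\alpha(Y^t_h,\cdot)=-\ud h^t$, and the resulting $\pi$-relatedness $\ud\pi(Y^t_h)=X^t_H\circ\pi$ via $\pi^*\Omega=\ud\alpha$ --- are exactly what one needs, and the $S^1$-equivariance argument via invariance of $Y^t_h$ (either from the explicit horizontal-plus-Reeb decomposition or from the uniqueness argument you also sketch) is standard and clean. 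One small cosmetic point: in your decomposition $Y^t_h=\widehat{X^t_H}+\bigl(h^t-\alpha(\widehat{X^t_H})\bigr)R_\alpha$, the correction term $\alpha(\widehat{X^t_H})$ vanishes identically since $\widehat{X^t_H}$ is by definition horizontal, i.e.\ in $\ker\alpha$, so the formula is simply $Y^t_h=\widehat{X^t_H}+h^t R_\alpha$. Your caveat about completeness is reasonable; in the paper's applications $W$ is always closed (it is $\CP^n$ in Section 6 and a closed symplectic manifold in Section 8), so the flows exist for all $t$.
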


\subsection{Estimation of the lower bound}\label{sec : computation lower bound}
The aim of this subsection is to prove the lower bound in Theorem \ref{thm : contact capacity} satisfied by $C_k$, for any integer $k\geq 2$ and $n\in\N_{>0}$, which we recall in the next Proposition.
\begin{prop}\label{prop : lower bounds}
    Let $\Psi : (\mathbb{B}^{2n}(R),\omega_0)\hookrightarrow (\CP^n,\omega)$ be a symplectic embedding where $ R>0$. Then $C_k(B_k(r))\geq\pi r^2$ for all $r\in (0,R]$
    where $B_k(r):=\pi_k^{-1}(\Psi(\mathbb{B}^{2n}(r)))$. 
\end{prop}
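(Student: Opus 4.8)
The plan is to establish the matching estimate $C_k(B_k(r))\ge\pi r^2$ by exhibiting, for each $c\in(0,\pi r^2)$, an element $\tphi\in\widetilde{\contoc}(B_k(r),\xi_k)$ with $c_k(\tphi)\ge c$; since $C_k(B_k(r))=\sup_{\tphi}c_k(\tphi)$, letting $c\uparrow\pi r^2$ then finishes the proof. The element $\tphi$ will be the prequantization lift of a carefully chosen autonomous radial Hamiltonian on $\CP^n$ supported in the Darboux ball, and the core of the argument is to pin down $c_k(\tphi)$ via the fine spectrality property~(6) of Proposition~\ref{prop : spectral selector for L_k^{2n+1}}.

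First observe that the mere existence of $\Psi\colon(\mathbb{B}^{2n}(R),\omega_0)\hookrightarrow(\CP^n,\omega)$ forces $\pi R^2\le 2\pi$ by the Karshon--Tolman sharpness statement recalled in Remark~\ref{rem : introduction}, hence $\pi r^2\le 2\pi$. Now fix $c\in(0,\pi r^2)$, pick $s_0\in(c,\pi r^2)$, and choose a smooth strictly decreasing $\rho\colon[0,s_0]\to[0,c]$ with $\rho(0)=c$, $\rho$ flat to infinite order at $s_0$, and $\rho'(s)\in(-1,0)$ for all $s\in(0,s_0)$; this is possible precisely because $s_0>c$, so the total decrease $c$ of $\rho$ is spread over an interval of length $s_0>c$ and can be realized with slopes of absolute value $<1$. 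Extend $\rho$ by $0$ on $[s_0,\infty)$ and set $H:=\rho(\pi|z|^2)$ in the chart $\Psi$, $H:=0$ on $\CP^n\setminus\Psi(\mathbb{B}^{2n}(r))$; then $H\ge 0$, $\max H=c$, and $\supp H=\Psi(\overline{\mathbb{B}^{2n}}(\sqrt{s_0/\pi}))$ is compactly contained in $\Psi(\mathbb{B}^{2n}(r))$ since $s_0<\pi r^2$. By Lemma~\ref{lem : contact lift}, the $\alpha_k$-Hamiltonian $h:=H\circ\pi_k$ generates an $\mathbb{T}^1$-equivariant contact isotopy $(\phi_h^t)$ supported in $B_k(r)$ with $\pi_k\circ\phi_h^t=\psi_H^t\circ\pi_k$, where $(\psi_H^t)$ is the Hamiltonian flow of $H$; put $\tphi:=[(\phi_h^t)]\in\widetilde{\contoc}(B_k(r),\xi_k)$.

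Next I would estimate $c_k(\tphi)$. Since $H\ge 0$ the isotopy is non-negative, so $\id\cleq\tphi$, while $\Pi(\tphi)=\phi_h^1\ne\id$ because $\psi_H^1$ restricts on each sphere $\{\pi|z|^2=s\}$, $0<s<s_0$, to the nontrivial scalar rotation by $e^{2\pi i\rho'(s)}\ne 1$; hence property~(3) gives $c_k(\tphi)>0$. Now compute $\overline{\spec^{\alpha_k}}(\tphi)=\spec^{\alpha_1}(\overline\tphi)$, where $\overline\tphi\in\widetilde{\conto}(\sphere{2n+1},\xi_1)$ is the contact lift of $(\psi_H^t)$ along the Hopf bundle $\pi_1\colon\sphere{2n+1}\to\CP^n$, again furnished by Lemma~\ref{lem : contact lift}, this time with Reeb period $2\pi$. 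This lift is a \emph{strict} contact isotopy (because $h=H\circ\pi_1$ forces $\mathcal L_{Y_h}\alpha_1=0$), so the discriminant points of $\phi_{\alpha_1}^{-T}\circ\overline{\phi_h^1}$ are exactly its fixed points; these lie over the fixed points of $\psi_H^1$ in $\CP^n$, and the associated Reeb displacement is the prequantization action $A=\int_0^1 H\circ\gamma\,\ud t-\int_D\omega$ of the underlying closed orbit $\gamma=\partial D$ of $(\psi_H^t)$, well defined modulo $2\pi$. By the choice of $\rho$ the rotations $e^{2\pi i\rho'(s)}$ are nontrivial for $0<s<s_0$, so the only fixed point of $\psi_H^1$ carrying $A\not\equiv 0$ is the centre $\Psi(0)$, with constant orbit and $A=H(\Psi(0))=c$, whereas every other fixed point lies where $\psi_H^1=\id$ and $H\equiv 0$, giving $A=0$. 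Therefore $\overline{\spec^{\alpha_k}}(\tphi)=\{0,c\}+2\pi\Z$.

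Finally, property~(6) gives $c_k(\tphi)\in\{0,c\}+2\pi\Z$; together with $c_k(\tphi)>0$ and $0<c<2\pi$ this leaves only $c_k(\tphi)\in\{c,\,2\pi,\,c+2\pi,\dots\}$, so in particular $c_k(\tphi)\ge c$ (and $c_k(\tphi)=c$ exactly, once one also invokes $c_k(\tphi)\le\int_0^1\max h^t\,\ud t=\max H=c$ from property~(5)). Hence $C_k(B_k(r))\ge c$ for every $c<\pi r^2$, which proves the Proposition. The step I expect to be the main obstacle is the spectrum computation: one must engineer the radial profile so that $\psi_H^1$ has \emph{no} closed orbit other than the constant ones — and it is exactly here that the bound $c<\pi r^2\le 2\pi$ is used, through $|\rho'|<1$ — and one must carry out the passage to $\sphere{2n+1}$ with care, since only by landing in $\{0,c\}+2\pi\Z$, rather than in the coarser spectrum $\{0,c\}+\tfrac{2\pi}{k}\Z$ visible on $L_k^{2n+1}$ itself, can one exclude the a priori possibility that $c_k(\tphi)$ equals the small value $\tfrac{2\pi}{k}$.
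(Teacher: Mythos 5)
Your construction and argument are correct and follow the same blueprint as the paper's proof of this proposition: a nonnegative autonomous radial Hamiltonian supported in the Darboux ball whose flow has no nonconstant orbits of period $\le 1$, its prequantization lift via Lemma~\ref{lem : contact lift}, strictness of the lift, the identification of translated points with critical fibers, and the resulting spectrum $\{0,c\}+2\pi\Z$ on $\sphere{2n+1}$. The one place where you diverge is in pinning down the value of $c_k(\tphi)$: the paper runs a continuity argument in $T$, starting from $c_k(\widetilde{\phi_h^0})=0$ and tracking the selector along the two branches $T\mapsto T(\pi r^2-\varepsilon)$ and $T\mapsto 0\pmod{2\pi}$ of the linearly parametrized spectrum up to $T=1$ (which requires $\delta=\min\{1,\tfrac{2\pi}{\pi r^2-\varepsilon}\}=1$, i.e.\ $\pi r^2\le 2\pi$), whereas you work only at $T=1$ and sandwich $c_k(\tphi)$ between $0$ (nondegeneracy, property~(3)) and $\max H=c$ (property~(5)), then intersect with $\{0,c\}+2\pi\Z$ (property~(6)) using $c<2\pi$ to single out $c$. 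Your version is slightly more economical in that it dispenses with the homotopy in $T$, but the two arguments lean on the Karshon--Tolman bound $\pi r^2\le 2\pi$ at exactly the same critical point and are otherwise interchangeable; minor cosmetic differences (your $\rho(\pi|z|^2)$ with $\rho'\in(-1,0)$ versus the paper's $f(\|x\|^2)$ with $f'\in(-\pi,0]$, and the paper's plateau at the origin versus your single maximum) do not affect the spectrum computation.
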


The proof of Proposition \ref{prop : lower bounds} will be an adaptation of the proof of \cite[Lemma 3, Chapter 3]{hoferzehnder} stating that the Hofer-Zehnder  capacity of $(\mathbb{B}^{2n}(r),\omega_0)$ is bounded from below by $\pi r^2$.

\begin{proof}\

   \textbf{Step 1 : Construction in $(\mathbb{B}^{2n}(r),\omega_0)$}
   
   Consider $0<\varepsilon<\pi r^2$ and a smooth function $f : \R_{\geq 0}\to\R_{\geq0}$ supported in $[0,r^2)$
   that satisfies moreover :
   \begin{enumerate}
   \item $f(t)=\pi r^2-\varepsilon$ for $t$ in a non-empty neighborhood of $\{0\}$
       \item $-\pi <f'(t)\leq 0$ for all $t\in\R_{\geq 0}$
       \item $f'(t)=0$ if and only if $f(t)\in\{\pi r^2-\varepsilon,0\}$
   \end{enumerate}
  \begin{center} \includegraphics[scale=0.5]{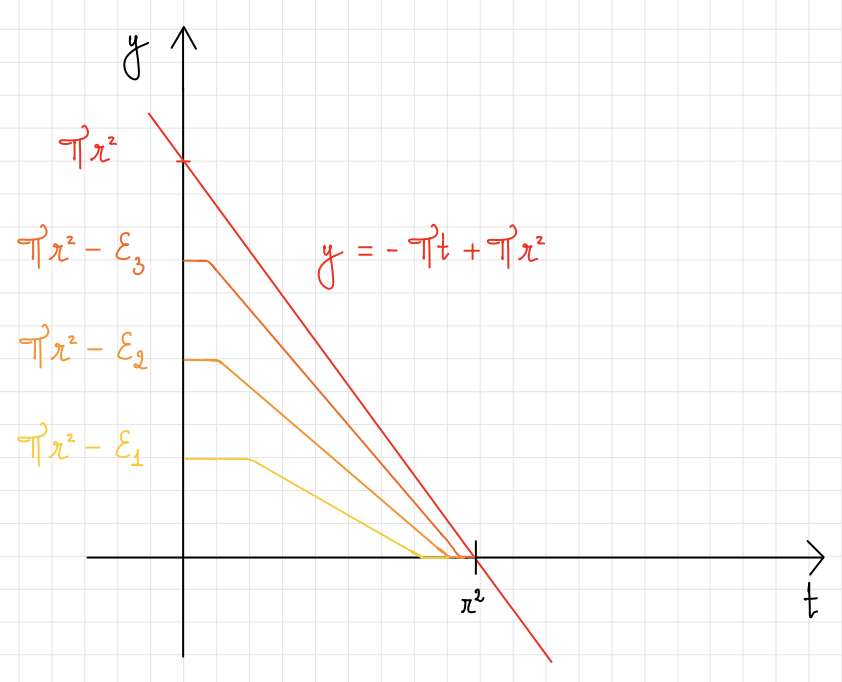}\\
  {\small{Figure 1: Graphs of different functions $f$ for different values of $\varepsilon$ with $0<\varepsilon_3<\varepsilon_2<\varepsilon_1$}}
\end{center}

Consider the open symplectic manifold $(\mathbb{B}^{2n}(r),\omega_0)$ and the compactly supported Hamiltonian function $H :\mathbb{B}^{2n}(r)\to\R,\ x\mapsto f(\|x\|^2)$, where $\|\cdot\|$ denotes the usual Euclidean norm of $\R^{2n}$. The vector field $X_H$ generated by $H$, i.e. $\omega_0(X_H,\cdot)=-\ud H$, satisfies for all $x\in\R^{2n}$
\[X_H(x)=J\nabla H(x)=2 f'(\|x\|^2) Jx\]
where $J$ is the standard complex structure of $\R^{2n}$\footnote{after identifying $\R^{2n}$ with $\C^{n}$ via $\Phi$ defined in the introduction $J:=\Phi^{-1}\circ i\circ\Phi$ corresponds to the multiplication by $i$} and $\nabla H$ denotes the gradient of $H$ with respect to the standard Euclidean scalar product. A direct computation shows that for all $t\in[0,1]$ and $x\in\mathbb{B}^{2n}(r)$ \begin{equation}\label{eq : equa diff}\psi_H^t(x)=e^{2f'(\|x\|^2)Jt}x=\cos(2f'(\|x\|^2)t)x+\sin(2f'(\|x\|^2)t)Jx,
\end{equation}
where $(\psi_H^t)\subset\hamc(\mathbb{B}^{2n}(r))$ denotes the flow generated by $H$.


Note that since $2f'([0,+\infty[)\subset (-2\pi,0]$ and $f'(0)=0$, the orbit $t\mapsto\psi_H^t(x)$ of a point $x\in\mathbb{B}^{2n}(r)$ is $T$-periodic for some $T\in(0,1]$, i.e. $\psi_H^T(x)=x$, if and only if $f'(\|x\|^2)=0$. In other words the orbit of $x$ is $T$-periodic, with $T\in(0,1]$, if and only if $f(\|x\|^2)=H(x)\in\{\pi r^2-\varepsilon,0\}$ and $\ud_x H=0$.\\

\textbf{Step 2 : Import the construction to $(\CP^n,\omega)$}
Consider now on $(\CP^n,\omega)$ the Hamiltonian function 
\[G:\CP^n\to\R \quad  \quad 
    x\mapsto \left\{
    \begin{array}{ll}
       H(\Psi^{-1}(x)) & \text{ if } x\in \Psi(\mathbb{B}^{2n}(r))\\
        0 & \text{ otherwise}
    \end{array}.
\right. \]
A direct computation shows that the isotopy it generates $(\psi_G^t)\subset\ham(\CP^n,\omega)$ satisfies :
\[\psi_G^t : 
    x\mapsto \left\{
    \begin{array}{ll}
\Psi(\psi_H^t(\Psi^{-1}(x))) & \text{ if } x\in \Psi(\mathbb{B}^{2n}(r))\\
        x & \text{ otherwise}
    \end{array}.
\right. \]
In particular, thanks to the first step we get that the orbit $t\mapsto \psi_G^t(x)$ of a point $x\in\CP^n$ is $T$-periodic, with $T\in(0,1]$, if and only if $G(x)\in\{\pi r^2-\varepsilon,0\}$ and $\ud_x G=0$.\\

\textbf{Step 3 : Import the construction to $(L_k^{2n+1},\xi_k)$}
 Consider on $(L_k^{2n+1},\ker\alpha_k)$ the $\alpha_k$-Hamiltonian function 
\[h : L_k^{2n+1}\to\R,\quad \quad x\mapsto G(\pi_k(x)),\]
and denote by $(\phi_h^t)\subset\conto(L_k^{2n+1},\xi_k)$ the flow of contactomorphisms it generates. It is clear that its lift $(\overline{\phi_h^t})\subset\conto(\sphere{2n+1},\ker\alpha)$, starting at the identity, is generated by the $\alpha$-contact Hamiltonian $\overline{h} :\sphere{2n+1}\to\R$, $x\mapsto G(\pi_1(x))$, where for simplicity  $\alpha:=\alpha_1$. We claim that: if $x\in \sphere{2n+1}$ is a $\alpha$-translated point of $\overline{\phi_h^T}$ for some $T\in(0,1]$ then
    \begin{equation}\label{eq : translated point equadiff}
\text{there is }a\in\{\pi r^2-\varepsilon,0\} \text{ so that } \overline{\phi_h^t}(x)=\phi_{\alpha}^{at}(x) \text{ for all } t\in[0,1].
    \end{equation}
To prove this claim, fix $x\in \sphere{2n+1}$ and $T\in(0,1]$ so that $x$ is a  $\alpha$-translated of $\overline{\phi_h^T}$. By Lemma \ref{lem : contact lift} $x$ is a $\alpha$-translated point of $\overline{\phi_h^T}$ implies that $\pi_1(x)$ is a fixed point of $\psi_G^T$. This implies, by the previous step, that $G(\pi_1(x))\in\{\pi r^2-\varepsilon,0\}$ and $\ud_{\pi_1(x)} G =0$.  Therefore $\ud_x \overline{h}=0$ and $\overline{h}(x)\in\{\pi r^2-\varepsilon,0\}$. Since $\overline{h}\circ\phi_\alpha^t=\overline{h}$ for all $t\in\R$ we also get  
   \begin{equation}\label{eq : 3 equations}
   \ud \overline{h}(R_\alpha)=0,\quad \ud_{\phi_\alpha^t(x)} \overline{h}=0,\quad \text{ and }\quad \overline{h}(\phi_\alpha^t(x))=\overline{h}(x).
   \end{equation}
   The last three equations \eqref{eq : 3 equations} together with equation \eqref{eq : contact vector field} imply that the contact vector field $Y$ associated to the $\alpha$-Hamiltonian function $\overline{h}$ satisfies for all $t\in\R$
   \[Y(\phi_\alpha^t(x))=\overline{h}(x)R_\alpha(\phi_\alpha^t(x)).\] 
   Therefore the map $ [0,1]\to \sphere{2n+1}$, $t\mapsto \phi_\alpha^{\overline{h}(x)t
}(x)$ solves the differential equation $\gamma'(t)=Y(\gamma(t))$ with initial condition $\gamma(0)=x$. By unicity of solutions of such equations $\overline{\phi_h^t}(x)=\phi_\alpha^{\overline{h}(x)t
}(x)$ for all $t\in[0,1]$ and the claim \eqref{eq : translated point equadiff} follows. In particular we have proved that for all $T\in(0,1]$ 
\begin{equation}\label{eq : spectre}
    \spec^\alpha(\overline{\phi_h^T})=\left\{T(\pi r^2-\varepsilon)+2\pi m\ |\ m\in\Z\right\}\bigcup\left\{2\pi m\ |\ m\in\Z\right\}.
    \end{equation}


\textbf{Step 4 : Computation of the selector and conclusion} Let us denote by $\widetilde{\phi_h^T}:=[(\phi_h^{tT})]\in\widetilde{\conto}(L_k^{2n+1},\xi_k)$ for all $T\in[0,1]$. Proposition \ref{prop : spectral selector for L_k^{2n+1}} ensures that $c_k(\widetilde{\phi_h^0})=0$ and that the map $[0,1]\to\R,\ T\mapsto c_k(\widetilde{\phi_h^T})\in \overline{\spec^{\alpha_k}}(\widetilde{\phi_h^T})=\spec^\alpha(\overline{\phi_h^T})$ is continuous. Therefore there is $a\in\{T(\pi r^2-\varepsilon),0\}$ so that $c_k(\widetilde{\phi_h^T})=Ta$ for all $T\in[0,\delta]$ where $\delta:=\min\{1,\frac{2\pi}{\pi r^2-\varepsilon}\}$.
 \begin{center} \includegraphics[scale=0.5]{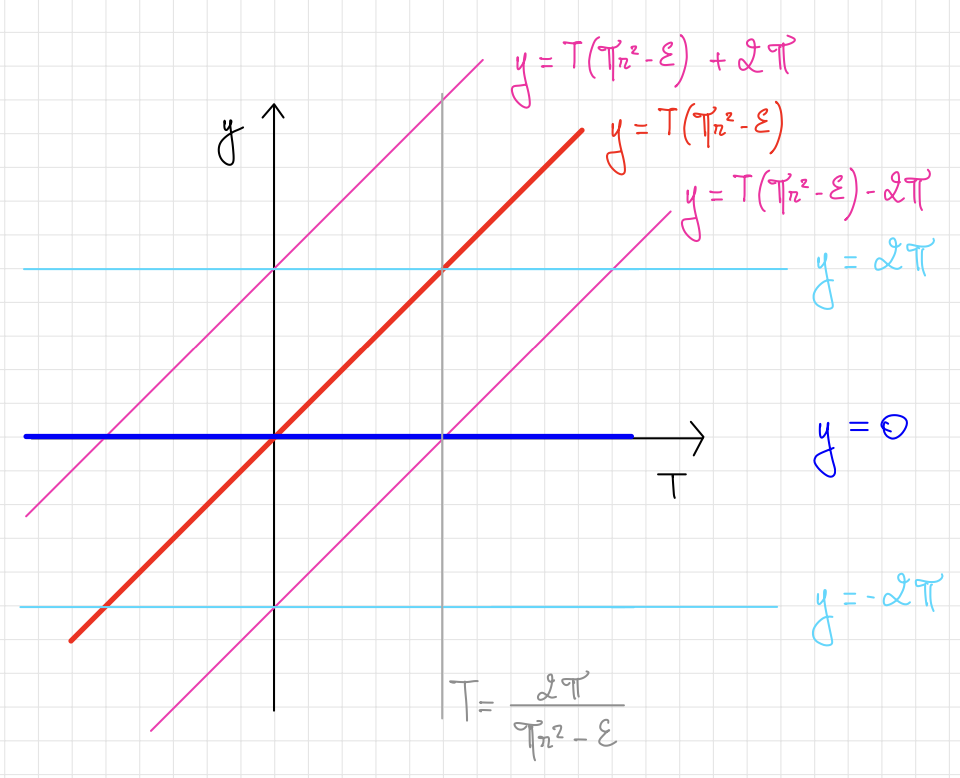}
 
 {\small{Figure 2: The graphs of $f_j : T\mapsto T(\pi r^2-\varepsilon)+2\pi j$ and $g_j : T\mapsto 2\pi j$ for $j\in\{-1,0,1\}$}}
\end{center}
Since $\id\cleq \widetilde{\phi_h^T}$ and $\widetilde{\phi_h^T}\ne\id$, for $T>0$, we use Proposition \ref{prop : spectral selector for L_k^{2n+1}} again to get that $c_k(\widetilde{\phi_H^T})>0$ and thus
\begin{equation}\label{eq : calcul du lower bound} 
c_k(\widetilde{\phi_h^T})=T(\pi r^2-\varepsilon)\text{ for all }T\in[0,\delta].
\end{equation}
 The relation \eqref{eq : calcul du lower bound} actually holds for all $T\in[0,1]$ since by
\cite[Thm 1]{Karshon_2005} the radius  $r\leq\sqrt{2}$ and so $\delta=1$ (see also Remark \ref{rem : introduction}). The $\alpha_k$-Hamiltonian function $h$ being supported in 
  $B_k(r):=\pi_k^{-1}(\Psi(\mathbb{B}^{2n},(r)))$ we get by definition that
   \[C_k(B_k(r))\geq c_k(\widetilde{\phi_h^1})=\pi r^2-\varepsilon.\]
   Since $\varepsilon>0$ can be choosen as small as we want this finishes the proof. \end{proof}
\subsection{Estimation of the upper bound}\label{sec : computation upper bound}
The aim of this subsection is to prove the upper bound in Theorem \ref{thm : contact capacity}, for all $n\in\N_{>0}$ and integer $k\geq 2$, which we recall in the following Proposition. 

\begin{prop}\label{prop : lower bound}
    Consider $\Psi : (\mathbb{B}^{2n}(\lambda R),\omega_0)\hookrightarrow (\CP^n,\omega)$ a symplectic embedding where $R>0$ and $\lambda>\sqrt{3}$. Then for all $r\in (0,R]$
    \[C_k(B_k(r))\leq\pi r^2\]
    where $B_k(r):=\pi_k^{-1}(\Psi(\mathbb{B}^{2n}(r)))$. 
\end{prop}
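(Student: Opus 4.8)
The plan is to prove the upper bound $C_k(B_k(r)) \leq \pi r^2$ by displacing $B_k(r)$ inside itself (after enlarging the ambient ball) and applying the capacity-energy inequality of Theorem \ref{thm : capacite-energie}. Concretely, fix $r \in (0,R]$; since $\lambda > \sqrt{3}$ we have room to displace: the key geometric input is that $\Psi(\mathbb{B}^{2n}(r))$ can be displaced from itself inside $\Psi(\mathbb{B}^{2n}(\lambda R))$ by a Hamiltonian isotopy of $\CP^n$ supported in $\Psi(\mathbb{B}^{2n}(\lambda R))$, and one can arrange the displacing Hamiltonian to have small Hofer energy. The constraint $\lambda > \sqrt{3}$ (rather than merely $\lambda > 1$) should be exactly what is needed to fit a translate of $\mathbb{B}^{2n}(r)$ disjoint from $\mathbb{B}^{2n}(r)$ inside $\mathbb{B}^{2n}(\lambda R) \supset \mathbb{B}^{2n}(\sqrt{3}\,r)$ while keeping the generating Hamiltonian's oscillation controlled by $\pi r^2$ (translating a ball of radius $r$ by a distance just over $2r$ requires an ambient ball of radius just over $3r$, whence the $\sqrt{3}$ after passing to the quadratic radius scale).

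First I would construct, for every $\varepsilon > 0$, a compactly supported Hamiltonian $H^\varepsilon : \CP^n \to \R$ supported in $\Psi(\mathbb{B}^{2n}(\lambda R))$ whose time-one flow $\psi^1_{H^\varepsilon}$ displaces $\Psi(\mathbb{B}^{2n}(r))$ from itself and with $\max H^\varepsilon - \min H^\varepsilon \leq \pi r^2 + \varepsilon$; a standard explicit model is a translation-type Hamiltonian in a Darboux chart, with the $\pi r^2$ bound coming from the fact that a ball of capacity $\pi r^2$ can be displaced by a Hamiltonian whose oscillation is arbitrarily close to its displacement energy, which for this ball is $\pi r^2$. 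Next, using Lemma \ref{lem : contact lift}, I would lift $H^\varepsilon$ to the $\alpha_k$-Hamiltonian $h^\varepsilon(t,x) := H^\varepsilon(\pi_k(x))$ on $L_k^{2n+1}$, obtaining a $\mathbb{T}^1$-equivariant contact isotopy $(\phi^t_{h^\varepsilon})$ whose time-one map, call it $\psi := \Pi(\tpsi)$ with $\tpsi := [(\phi^t_{h^\varepsilon})] \in \widetilde{\conto}^{\mathbb{T}^1}(L_k^{2n+1},\xi_k)$, satisfies $\pi_k \circ \psi = \psi^1_{H^\varepsilon} \circ \pi_k$; since $\psi^1_{H^\varepsilon}$ displaces $\Psi(\mathbb{B}^{2n}(r))$ and since $\psi$ is $\mathbb{T}^1$-equivariant (hence commutes with the $\alpha_k$-Reeb flow, which acts fiberwise), $\psi$ $\alpha_k$-displaces $B_k(r) = \pi_k^{-1}(\Psi(\mathbb{B}^{2n}(r)))$. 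Then the second (sharper) assertion of Theorem \ref{thm : capacite-energie} applies: $C_k(B_k(r)) \leq c_k(\tpsi^{-1}) + c_k(\tpsi)$, and by the estimate in point (5) of Proposition \ref{prop : spectral selector for L_k^{2n+1}} together with $\alpha_k(\tfrac{d}{dt}\phi^t_{h^\varepsilon}) = h^\varepsilon = H^\varepsilon \circ \pi_k$ one bounds $c_k(\tpsi) \leq \int_0^1 \max H^\varepsilon\,\ud t$ and $c_k(\tpsi^{-1}) \leq \int_0^1 \max(-H^\varepsilon)\,\ud t = -\int_0^1 \min H^\varepsilon\,\ud t$ (using that the inverse is generated by $-H^\varepsilon(t,\cdot)$ reparametrized). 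Hence $C_k(B_k(r)) \leq \max H^\varepsilon - \min H^\varepsilon \leq \pi r^2 + \varepsilon$, and letting $\varepsilon \to 0$ finishes the proof.

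The main obstacle I anticipate is the explicit displacement construction in Step 1: one must produce a compactly supported Hamiltonian on $\mathbb{B}^{2n}(\lambda R)$ that genuinely displaces $\mathbb{B}^{2n}(r)$ from itself (for every $r \leq R$) \emph{and} whose oscillation is at most $\pi r^2 + \varepsilon$, which is precisely where the hypothesis $\lambda > \sqrt{3}$ must be used — a sloppy translation costs oscillation proportional to the displacement distance times the gradient, so one needs to trade off the shape of the cutoff profile against the available room; the clean bound $\pi r^2$ comes from choosing a Hamiltonian that is affine (constant gradient $= $ translation speed) on a slightly larger ball and cut off outside, integrating the speed against the ball's width. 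A secondary point requiring care is verifying that $\tpsi \in \widetilde{\conto}^{\mathbb{T}^1}(L_k^{2n+1},\xi_k)$ — i.e. that the \emph{specific} lift via Lemma \ref{lem : contact lift} is represented by a $\mathbb{T}^1$-commuting isotopy, not merely that its time-one map commutes — but this is immediate from the lemma's statement. Everything else (the energy estimate, monotonicity, the passage to the limit) is routine given the properties already established.
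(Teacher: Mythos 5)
Your overall strategy is exactly the paper's: build a displacing Hamiltonian $H$ on $\mathbb{B}^{2n}(\lambda R)$ with small oscillation, transport it to $\CP^n$, lift it to a $\mathbb{T}^1$-equivariant $\alpha_k$-Hamiltonian via Lemma~\ref{lem : contact lift}, observe that the resulting $\tpsi$ lies in $\widetilde{\conto}^{\mathbb{T}^1}(L_k^{2n+1},\xi_k)$ and $\alpha_k$-displaces $B_k(r)$ (this is automatic since $B_k(r)$ is a union of Reeb fibers), and then combine the sharper half of Theorem~\ref{thm : capacite-energie} with Proposition~\ref{prop : spectral selector for L_k^{2n+1}}(5). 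All of that part of your plan is correct.

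The gap is in Step~1, the explicit displacing Hamiltonian. You propose a ``translation-type'' Hamiltonian, ``affine (constant gradient $=$ translation speed) on a slightly larger ball and cut off outside.'' That construction does not achieve the needed oscillation bound. To translate $\mathbb{B}^{2n}(r)$ off itself in time~$1$ by an affine $H$ the gradient magnitude must be at least $2r$; across a region of diameter at least $2r$ this already gives $\osc(H)\geq 4r^2 > \pi r^2$, and the cutoff can only enlarge the oscillation. So the ``clean bound $\pi r^2$'' you claim for this model is false, and the heuristic you give for the $\sqrt{3}$ (``radius just over $3r$, whence $\sqrt{3}$ after passing to the quadratic radius scale'') also does not hold up -- radius $3r$ would correspond to $\sqrt{9}$, not $\sqrt{3}$, in that normalization. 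The paper circumvents this by not translating at all: it invokes Hind's planar displacement lemma (\cite[Lemma 4.2]{Hind_2013}, restated as Lemma~\ref{lem : displacement}) to produce a nonnegative $H:\R^2\to\R_{\geq 0}$ supported in $\mathbb{B}^2(r\sqrt{2+2\delta})$ with $\max H\leq r^2(\pi+\delta)$ that displaces $\mathbb{B}^2(r)$, then takes the product with the identity on the remaining $\R^{2(n-1)}$ factor and cuts off. The $\sqrt{3}$ then arises because $\mathbb{B}^2(r\sqrt{2+2\delta})\times\mathbb{B}^{2(n-1)}(r)$ sits inside $\mathbb{B}^{2n}(r\sqrt{3+2\delta})$. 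You need some input of this kind -- a near-optimal displacement in $\R^2$ via a ``twist'' rather than a straight translation -- to make Step~1 go through; with it, the rest of your argument closes exactly as you describe.
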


Let us first consider $(\R^{2}=\R^{2}\times\{0\},dx_1\wedge dy_1)$ the symplectic submanifold of $(\R^{2n},\sum\limits_{i=1}^n dx_i\wedge dy_i)$. We recall a Lemma from \cite[Lemma 4.2]{Hind_2013}.
\begin{lem}[\cite{Hind_2013}]\label{lem : displacement}
    For every $\delta>0$ there exists an Hamiltonian function $H : \R^{2}\to\R_{\geq 0}$ supported in $\mathbb{B}^2(\sqrt{2+2\delta})$ so that $\max H \leq (\pi +\delta)$ and $\psi_H^1(\mathbb{B}^{2}(1))\bigcap \mathbb{B}^{2}(1)=\emptyset$.  
\end{lem}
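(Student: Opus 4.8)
The plan is to reduce, by conjugating with a symplectomorphism of $\R^2$, the problem of displacing the round disc $\mathbb{B}^2(1)$ to that of displacing a thin circular sector, which in turn is moved off itself by a cut-off rotation through the angle $\pi$. One cannot hope to argue directly: the disc $\mathbb{B}^2(1)$ has area $\pi$ but diameter $2$, which is too large compared with the diameter $2\sqrt{2+2\delta}\approx 2\sqrt2$ of the ambient disc for any ``translation-type'' flow supported in $\mathbb{B}^2(\sqrt{2+2\delta})$ to push it off itself. A circular sector of angular width just below $\pi$, sitting in a disc of area slightly bigger than $2\pi$, also has area $\pi$, but it \emph{is} displaced off itself by a half-turn, and a half-turn costs Hofer energy only slightly more than $\pi$ (roughly half the area of the ambient disc).

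Concretely, I would first fix small auxiliary constants $0<\eta\ll\varepsilon\ll\delta$ and choose a smoothly embedded open disc $K\subset\R^2$ contained in a circular sector of angular width $\le\pi-\eta$, with radial extent inside $(\eta,\sqrt{2+\varepsilon}-\eta)$, and with $\operatorname{area}(K)=\pi=\operatorname{area}(\mathbb{B}^2(1))$; such a $K$ exists once $\eta\ll\varepsilon$, since a full sector of angle $\pi$ and radius $\sqrt{2+\varepsilon}$ already has area $\tfrac12\pi(2+\varepsilon)>\pi$. Writing $z\in\R^2\cong\C$ in terms of $I:=\tfrac12|z|^2$ and the angle $\theta$, I would take the autonomous Hamiltonian $H_0(z):=h_0(I)$, where $h_0\ge 0$ is smooth with $h_0(0)=0$, $h_0'\equiv\pi$ on $[0,I_K]$ for $I_K:=\tfrac12(\sqrt{2+\varepsilon}-\eta)^2\le 1+\tfrac\varepsilon2$, and $h_0$ decreasing back to $0$ on $[I_K,I_1]$ for some $I_1<1+\delta$. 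The flow $\psi_{H_0}^t$ preserves each $I$ and rotates the circle $\{I=c\}$ through the angle $h_0'(c)\,t$, so $\psi_{H_0}^1$ rotates $K$ rigidly through $\pi$; since $K$ is contained in a sector of width $<\pi$ and bounded away from the origin, this gives $\psi_{H_0}^1(K)\cap K=\emptyset$. Moreover $H_0\ge 0$, $\supp H_0=\{I\le I_1\}\subset\mathbb{B}^2(\sqrt{2+2\delta})$, and $\max H_0=h_0(I_K)=\pi I_K\le\pi(1+\tfrac\varepsilon2)<\pi+\delta$ provided $\varepsilon<2\delta/\pi$.

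It then remains to produce a symplectomorphism $\Phi$ of $(\R^2,\omega_0)$, supported in $\mathbb{B}^2(\sqrt{2+2\delta})$, with $\Phi(\mathbb{B}^2(1))=K$. Granting this, the Hamiltonian $H:=H_0\circ\Phi$ does the job: it is autonomous and $\ge 0$; one has $\supp H=\Phi^{-1}(\supp H_0)\subset\mathbb{B}^2(\sqrt{2+2\delta})$ because $\Phi$ maps that ball to itself; $\max H=\max H_0<\pi+\delta$; and since the flow of $H_0\circ\Phi$ is $\Phi^{-1}\circ\psi_{H_0}^t\circ\Phi$, we obtain $\psi_H^1(\mathbb{B}^2(1))=\Phi^{-1}(\psi_{H_0}^1(K))$, which is disjoint from $\Phi^{-1}(K)=\mathbb{B}^2(1)$. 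To construct $\Phi$ I would invoke the standard (relative Moser) fact that, in dimension two, any two smoothly embedded compact discs of equal area contained in the interior of a disc are carried one onto the other by a symplectomorphism equal to the identity near the boundary of the ambient disc; applied to $\overline{\mathbb{B}^2(1)}$ and $\overline K$ inside $\mathbb{B}^2(\sqrt{2+2\delta})$ (whose complements in that ball are annuli of equal area) and extended by the identity, this produces $\Phi$.

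The step I expect to be the crux is precisely this construction of $\Phi$ together with the accompanying area bookkeeping: the four regions involved --- $\mathbb{B}^2(1)$, its round complement in $\mathbb{B}^2(\sqrt{2+2\delta})$, the sector-like disc $K$, and $\supp H_0$ --- all have area at most $\pi(2+2\delta)$, so the deformation of the disc into the sector and the subsequent half-turn just fit inside $\mathbb{B}^2(\sqrt{2+2\delta})$, using exactly the slack afforded by $\delta>0$; as $\delta\to 0$ the construction degenerates, in agreement with the fact that the displacement energy of $\mathbb{B}^2(1)$ equals its symplectic area $\pi$. Once $\Phi$ is in hand and the auxiliary constants $\eta\ll\varepsilon\ll\delta$ are chosen compatibly with the inequalities above, the three required properties of $H$ follow by direct inspection.
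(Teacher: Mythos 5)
Your proof is correct and, to my reading, is the same construction as in the cited reference: you realize the displacement as $H=H_0\circ\Phi$, where $H_0(z)=h_0(\tfrac12|z|^2)$ is a radial Hamiltonian whose time-one flow rigidly rotates the region $\{I\le I_K\}$ through the angle $\pi$ at energy $\pi I_K\le\pi+\delta$, and $\Phi$ is a compactly supported area-preserving diffeomorphism of $\mathbb{B}^2(\sqrt{2+2\delta})$ carrying $\mathbb{B}^2(1)$ onto a sector-shaped disc $K$ of area $\pi$ sitting in $\{I< I_K\}$, bounded away from the origin, and of angular width strictly less than $\pi$; since conjugation preserves the support condition, the sign, and the sup-norm, the three required properties follow. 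The one step you quote without proof, the existence of $\Phi$, is the classical two-dimensional Schoenflies/Moser statement (isotopy extension followed by a relative Moser correction on the disc and on the complementary annulus), and the area and energy bookkeeping you carry out with $\eta\ll\varepsilon\ll\delta$ is exactly what guarantees it applies and closes the inequalities.
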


As a direct corollary we have.

\begin{cor}\label{cor : displacement}
    For every $r, \delta>0$ there exists an Hamitlonian function $H : \R^2\to\R_{\geq 0}$ supported in $\mathbb{B}^2(r\sqrt{2+2\delta})$ so that $\max H\leq r^2(\pi+\delta)$ and  $\psi_H^1(\mathbb{B}^{2}(r))\bigcap \mathbb{B}^2(r)=\emptyset$. 
\end{cor}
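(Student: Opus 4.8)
The plan is to obtain $H$ from the Hamiltonian furnished by Lemma \ref{lem : displacement} via a linear symplectic rescaling. First I would apply Lemma \ref{lem : displacement} with the \emph{same} parameter $\delta>0$ to get a Hamiltonian $H_0 : \R^2\to\R_{\geq 0}$ supported in $\mathbb{B}^2(\sqrt{2+2\delta})$ with $\max H_0\leq \pi+\delta$ and $\psi_{H_0}^1(\mathbb{B}^2(1))\cap\mathbb{B}^2(1)=\emptyset$. Let $\sigma_r:\R^2\to\R^2$ denote the dilation $z\mapsto rz$, which satisfies $\sigma_r^*\omega_0=r^2\omega_0$, and set
\[ H:\R^2\to\R_{\geq 0},\qquad H(z):=r^2\,H_0\big(\sigma_r^{-1}(z)\big)=r^2\,H_0(z/r). \]

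Next I would record the three consequences of this definition. Since $\supp H=\sigma_r(\supp H_0)$, we get $\supp H\subset\sigma_r\big(\mathbb{B}^2(\sqrt{2+2\delta})\big)=\mathbb{B}^2(r\sqrt{2+2\delta})$; clearly $H\geq 0$ and $\max H=r^2\max H_0\leq r^2(\pi+\delta)$. The only point requiring a short computation is the behaviour of the flow: using $\sigma_r^*\omega_0=r^2\omega_0$ together with the chain rule $dH_z=r\,(dH_0)_{z/r}$, the Hamiltonian vector fields satisfy $X_H(z)=r\,X_{H_0}(z/r)=(\sigma_r)_*X_{H_0}(z)$, and integrating this identity (both flows start at the identity) yields the conjugation formula
\[ \psi_H^t=\sigma_r\circ\psi_{H_0}^t\circ\sigma_r^{-1}\qquad\text{for all }t. \]

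Finally, from this conjugation formula, $\psi_H^1(\mathbb{B}^2(r))=\sigma_r\big(\psi_{H_0}^1(\sigma_r^{-1}(\mathbb{B}^2(r)))\big)=\sigma_r\big(\psi_{H_0}^1(\mathbb{B}^2(1))\big)$, so that
\[ \psi_H^1(\mathbb{B}^2(r))\cap\mathbb{B}^2(r)=\sigma_r\big(\psi_{H_0}^1(\mathbb{B}^2(1))\cap\mathbb{B}^2(1)\big)=\sigma_r(\emptyset)=\emptyset, \]
which is the last required property. There is no genuine obstacle here; the one thing to be careful about is the normalization of the rescaling of the Hamiltonian — it must be the factor $r^2$ matching $\sigma_r^*\omega_0=r^2\omega_0$, so that $\psi_H^t$ is honestly the $\sigma_r$-conjugate of $\psi_{H_0}^t$ and not a time-reparametrization of it. Once this factor is fixed, the rescalings of $\max H$ and of the support radius are forced and coincide exactly with those in the statement.
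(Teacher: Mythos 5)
Your proof is correct and follows the same approach as the paper: both apply Lemma~\ref{lem : displacement} and conjugate the resulting flow by the dilation $z\mapsto rz$, rescaling the Hamiltonian by the conformal factor $r^2$. The chain-rule computation you include, showing $\psi_H^t=\sigma_r\circ\psi_{H_0}^t\circ\sigma_r^{-1}$, is precisely what the paper leaves as ``a direct computation.''
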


\begin{proof}
    Let $H$ be a Hamiltonian function that satisfies the requirements of Lemma \ref{lem : displacement} with respect to $\delta>0$ and consider the diffeomorphism $\psi_r :\R^2\to\R^2,\ x\mapsto r x$. A direct computation shows that the isotopy $(\psi_r\circ\psi_t\circ\psi_r^{-1})$ is contained in $\hamc(\R^2,\omega_0)$ and is generated by the Hamiltonian function $H_r:=r^2 H\circ(\psi_r)^{-1}$ whose support is equal to $(\psi_r)(\supp(H))$. Therefore $H_r$ satisfies the requirements of Corollary \ref{cor : displacement} with respect to $\delta>0$ and $r>0$. \end{proof}
    

    

We are now ready to prove Proposition \ref{prop : lower bound}.

\begin{proof}[Proof of Proposition \ref{prop : lower bound}]

\textbf{Step 1 : Construction in $(\mathbb{B}^{2n}(\lambda R),\omega_0)$}

    Let $r\in(0,R]$, $\delta>0$ be small enough so that $\sqrt{3+2\delta}<\lambda$  and $H :\R^2\to\R_{\geq 0}$ be an Hamiltonian function supported in $\mathbb{B}^{2}(r\sqrt{2+2\delta})$ so that $\max H\leq r^2(\pi+\delta)$ and $\psi_H^1(\mathbb{B}^2(r))\bigcap \mathbb{B}^2(r)=\emptyset$ whose existence is guaranteed by Corollary \ref{cor : displacement}. The function $K :\R^{2n}\to \R_{\geq 0}, ((x_1,y_1),\cdots, (x_n,y_n))\mapsto H(x_1,y_1)$, even if non compactly supported, generates a vector field $X_k$, i.e. $\omega_0(X_k,\cdot)=-dK$, which is complete, and whose flow $(\psi_K^t)\subset\ham(\R^{2n},\omega_0)$ satisfies $\psi_K^t=\psi_H^t\times \id_{\R^{2(n-1)}}$ for all $t\in[0,1]$. Therefore \[\psi_K^1\left(\mathbb{B}^{2n}(r)\right)\bigcap\mathbb{B}^{2n}(r)\subset \left(\psi_H^1\left(\mathbb{B}^2(r)\right)\times\mathbb{B}^{2(n-1)}(r)\right)\bigcap \mathbb{B}^2(r)\times\mathbb{B}^{2(n-1)}(r)=\emptyset.\]
Moreover  $\underset{t\in[0,1]}\bigcup\psi_K^t(\mathbb{B}^{2n}(r))$ is contained in $\mathbb{B}^{2n}(r\sqrt{3+2\delta})$ since \[\psi_K^t(\mathbb{B}^{2n}(r))\subset\psi_H^t(\mathbb{B}^{2}(r))\times \mathbb{B}^{2(n-1)}(r)\subset \mathbb{B}^{2}(r\sqrt{2+2\delta})\times \mathbb{B}^{2(n-1)}(r)\]
and the latter set is clearly contained inside $\mathbb{B}^{2n}(r\sqrt{3+2\delta})$. Given that $\sqrt{3+2\delta}<\lambda$ one can construct $\rho :\R^{2n}\to [0,1]$ a smooth cutoff function supported in $\mathbb{B}^{2n}(\lambda R)$ which is constantly equal to $1$ on $\mathbb{B}^{2n}(r\sqrt{3+2\delta})$ so that the Hamiltonian function $\widetilde{K}:=\rho K$ generates a path of Hamiltonian symplectomorphisms $(\psi_t)$ satisfying
\begin{enumerate}
    \item $\widetilde{K}$ and $(\psi_t)$ are both supported in $\mathbb{B}^{2n}(\lambda R)$
    \item $\widetilde{K}\geq 0$ and $\max \widetilde{K}\leq r^2(\pi+\delta)$
    \item $\psi_1\left(\mathbb{B}^{2n}(r)\right)\cap\mathbb{B}^{2n}(r)=\emptyset$.\\
\end{enumerate}
    
    \textbf{Step 2 : Import the construction to $(\CP^n,\omega)$}
 A direct computation shows that the Hamiltonian function
   \[G : \CP^n\to\R_{\geq 0},\quad \quad  
    x\mapsto \left\{
    \begin{array}{ll}
       \widetilde{K}(\Psi^{-1}(x)) & \text{ if } x\in \Psi(\mathbb{B}^{2n}(\lambda R))\\
        0 & \text{ otherwise}
    \end{array}
\right.\]
on $(\CP^n,\omega)$, generates an isotopy $(\psi_G^t)\subset\ham(\CP^n,\omega)$ that satisfies
\[[0,1]\times\CP^n\to\CP^n,\quad \quad (t,x)\mapsto \left\{
    \begin{array}{ll}
       \Psi(\psi_t(\Psi^{-1}(x))) & \text{ if } x\in \Psi(\mathbb{B}^{2n}(\lambda R))\\
        x & \text{ otherwise.}
    \end{array}
\right. \] We therefore clearly have: 
\begin{enumerate}
\item $G\geq 0$ and $\max G\leq r^2(\pi+\delta)$
   \item $\psi_G^1(B(r))\cap B(r)=\emptyset$, where $B(r):=\Psi(\mathbb{B}^{2n}(r))$.\\
\end{enumerate}

\textbf{Step 3 : Lift the construction to $(L_k^{2n+1},\xi_k)$} We lift this construction to $(L_k^{2n+1},\ker\alpha_k)$, thanks to Lemma \ref{lem : contact lift}. More precisely the $\alpha_k$-Hamiltonian function 
$h : L_k^{2n+1}\to\mathbb{R}_{\geq 0}$, $x\mapsto G(\pi_k(x))$ generates a flow of $\sphere{1}$-equivariant contactomorphisms $(\phi_h^t)\subset\conto(L_k^{2n+1},\xi_k)$ satisfying :
\begin{enumerate}
    \item $h\geq 0$ and $\max h\leq r^2(\pi+\delta)$
    \item $\phi_h^1(B_k(r))\cap B_k(r)=\emptyset$, where $B_k(r):=\pi_k^{-1}(\Psi(\mathbb{B}^{2n}(r)))$.
\end{enumerate}
Indeed, if we suppose by contradiction that $\phi_h^1(B_k(r))\cap B_k(r)\ne\emptyset$ then we get the following absurdity \[\emptyset\ne\pi_k\left(\phi_h^1(B_k(r))\cap B_k(r)\right)\subset \pi_k\left(\phi_h^1(B_k(r))\right)\cap \pi_k\left(B_k(r)\right)=\psi_G^1(B(r))\cap B(r)=\emptyset.\]

\textbf{Step 4 : Conclusion } Therefore thanks to Theorem \ref{thm : capacite-energie}
\[c_k(B_k(r))\leq c_k(\widetilde{\phi}^{-1})+c_k(\widetilde{\phi})\]
where $\widetilde{\phi}:=[(\phi_h^t)]\in\widetilde{\conto}(L_k^{2n+1},\xi_k)$. Finally, since the $\alpha_k$-Hamiltonian function $L_k^{2n+1}\to\R_{\leq 0},\ x\mapsto -h(x)$, generates the contact isotopy $(\phi_h^t)^{-1}$ which represents $\widetilde{\phi}^{-1}$, the last point of Proposition \ref{prop : spectral selector for L_k^{2n+1}} implies that 
\[ c_k(\widetilde{\phi})+c_k(\widetilde{\phi}^{-1}) \leq \int_0^1\max h\ud t-\int_0^1 \min h\ud t\leq r^2(\pi+\delta).\]
Letting $\delta$ goes to $0$ finishes the proof.
\end{proof}

\section{Order spectral selectors for strongly orderable prequantizations}\label{sec : spectral selectors for strongly orderable prequantization}

\subsection{Order spectral selectors on Legendrian isotopy class}\label{sec : strongly orderability}
Let $(M,\xi)$ be a cooriented contact manifold, not necessarily closed. As in the closed case in Section \ref{sec : spectral selectors}, a contact isotopy, that we denote by $(\phi_t)$, is a $[0,1]$-family of contactomorphisms $\{\phi_t\}\subset\cont(M,\xi)$ so that  $[0,1]\times M\to M$ is smooth, and we denote by $\conto(M,\xi)$ the set of contactomorphisms isotopic to the identity, and by $\contoc(M,\xi)$ the ones that are isotopic to the identity through a compactly supported isotopy. Similarly, a $[0,1]$-family of closed Legendrians $(\Lambda_t)\subset (M,\xi)$ is a Legendrian isotopy if and only if there exists a compactly supported contact isotopy $(\phi_t)\subset\contoc(M,\xi)$ parametrizing it, i.e.  $\phi_t(\Lambda_0)=\Lambda_t$ and $\phi_t$ is compactly supported. Note that all the Legendrians we deal with in this paper are closed (even if the contact manifold itself may not). Therefore the isotopy class of a closed Legendrian $\Lambda\subset M$, that we denote by $\Leg(\Lambda)$, is an homogeneous space $\contoc(M,\xi)/\mathrm{Stab}(\Lambda)$ and inherits the $C^1$-topology of $\contoc(M,\xi)$. We denote by $\uLeg(\Lambda)$ its universal cover and $\Pi : \uLeg(\Lambda)\to\Leg(\Lambda)$ the natural projection. Note that we have a natural left action of $\conto(M,\xi)$ (resp. $\widetilde{\conto}(M,\xi)$\footnote{the set of equivalence classes of contact isotopy starting at the identity, where two such contact isotopies are identified if they are homotopic relatively to endpoints}) on $\Leg(\Lambda)$ (resp. $\uLeg(\Lambda)$). 

If $\alpha$ is a contact form whose Reeb vector field is complete, then as for contactomorphisms we have a notion of spectrum. More precisely let $\tLambda_1,\tLambda_0\in\uLeg(\Lambda)$ and denote by $\Lambda_1$ and $\Lambda_0$ their respective projections to $\Leg(\Lambda)$ then 
\[\spec^\alpha(\tLambda_1,\tLambda_0)=\{t\in\R \ |\ \phi_\alpha^t(\Lambda_0)\cap \Lambda_1\ne\emptyset\}.\]

 A Legendrian isotopy $(\Lambda_t)$ is said to be non-negative (resp. positive) if for some, and thus all, contact form $\alpha$ supporting $\xi$ we have $\alpha(\frac{d}{dt}\phi_t(x))\geq 0$ (resp. $>0$) for all $x\in\Lambda_0$ and any compactly supported contact isotopy $(\phi_t)$ parametrizing it. We say, following \cite{EP00}, that $\uLeg(\Lambda)$ is orderable if there does not exist any positive contractible loop of Legendrians in $\Leg(\Lambda)$. Assuming that $\uLeg(\Lambda)$ is orderable implies that the binary relation $\cleq$, defined by $\tLambda_0\cleq\tLambda_1$ if and only if there exists a path $(\tLambda_t)$ joining $\tLambda_0$ to $\tLambda_1$ such that $(\Pi(\tLambda_t))$ is a non-negative Legendrian isotopy, is a partial order.

For any contact form $\alpha$ supporting $\xi$ whose Reeb vector field is complete  in \cite{allais2023spectral} the authors defined a map $\ell_+^\alpha : \uLeg(\Lambda)\times\uLeg(\Lambda) \to \R\cup\{-\infty\}$
\[
(\tLambda_1,\tLambda_0) \mapsto \inf\left\{t\in\R\ | \tLambda_1\cleq\widetilde{\phi_{\alpha}^t}\cdot\tLambda_0\right\}=\inf\left\{\underset{[0,1]\times \Lambda_0}\max \alpha\left(\frac{d}{dt}\phi_t(x)\right)\ |\ [(\phi_t)]\cdot\tLambda_0=\tLambda_1\right\},
\] where $\Lambda_0:=\Pi(\tLambda_0)$, and showed the following: 
\begin{prop}[\cite{allais2023spectral}]\label{prop : spec selector legendrian}
The map $\ell_+^\alpha$ takes value in $\R$ if and only if $\uLeg(\Lambda)$ is orderable. Moreover if $\uLeg(\Lambda)$ is orderable then for any $\tLambda_0,\tLambda_1,\tLambda_2\in\uLeg(\Lambda)$ and any $\tphi\in\widetilde{\conto}(M,\xi)$
\begin{enumerate}
    \item $\ell_+^\alpha(\tLambda_0,\tLambda_0)=0$
    \item $\tLambda_0\cleq\tLambda_1$ implies that $\ell_+^\alpha(\tLambda_1,\tLambda_0)\geq 0$ and $\ell_+^\alpha(\Lambda_0,\Lambda_1)\leq 0$
    \item $\tLambda_0\cleq\tLambda_1$ and $\tLambda_0\ne\tLambda_1$ implies that $\ell_+^\alpha(\tLambda_1,\tLambda_0)>0$
    \item $\ell_+^\alpha(\tLambda_2,\tLambda_0)\leq \ell_+^\alpha(\tLambda_2,\tLambda_1)+ \ell_+^\alpha(\tLambda_1,\tLambda_0)$
    \item $\ell_+^\alpha(\tphi\cdot\tLambda_1,\tphi\cdot\tLambda_0)=\ell_+^{\varphi^*\alpha}(\tLambda_1,\tLambda_0)$ where $\varphi=\Pi(\tphi)$
    \item For any contact isotopy $(\phi_t)$ representing $\tphi$
    \[\int_0^1\underset{x\in\Pi(\tLambda_0)}\min \alpha\left(\frac{d}{dt}\phi_t(x)\right)\ud t\leq \ell_+^\alpha(\tphi\tLambda_0,\tLambda_0)\leq \int_0^1\underset{x\in\Pi(\tLambda_0)}\max \alpha\left(\frac{d}{dt}\phi_t(x)\right)\ud t\]
    \item $\ell_+^\alpha(\tLambda_1,\tLambda_0)\in\spec^\alpha(\tLambda_1,\tLambda_0)$.
\end{enumerate}
\end{prop}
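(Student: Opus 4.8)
The plan is to work throughout with the order-theoretic description
\[\ell_+^\alpha(\tLambda_1,\tLambda_0)=\inf\left\{t\in\R\ \middle|\ \tLambda_1\cleq\widetilde{\phi_\alpha^t}\cdot\tLambda_0\right\}\]
and to record two elementary facts at the outset: by transitivity of $\cleq$ together with the observation that $s\mapsto\widetilde{\phi_\alpha^s}\cdot\tLambda$ is a non-negative path, the set $S(\tLambda_1,\tLambda_0):=\{t\ |\ \tLambda_1\cleq\widetilde{\phi_\alpha^t}\cdot\tLambda_0\}$ is an up-set, hence an interval unbounded from above; and it is non-empty, since any two elements of $\uLeg(\Lambda)$ are joined by a contact isotopy whose $\alpha$-speed along $\Lambda_0$ is bounded by compactness. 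Thus $\ell_+^\alpha$ is well-defined with values in $[-\infty,+\infty)$. To see that it is real-valued exactly when $\uLeg(\Lambda)$ is orderable I would argue as in the Eliashberg--Polterovich dichotomy \cite{EP00}: a positive contractible loop of Legendrians can be conjugated to one based at $\Lambda_0$ (positivity is preserved because contact conformal factors are positive) and lifted to a loop based at $\tLambda_0$, and the usual speed-up trick then produces $T>0$ with $\widetilde{\phi_\alpha^T}\cdot\tLambda_0\cleq\tLambda_0$, hence $\widetilde{\phi_\alpha^{nT}}\cdot\tLambda_0\cleq\tLambda_0$ for every $n$, which forces $S(\tLambda_1,\tLambda_0)=\R$; conversely, if $\ell_+^\alpha(\tLambda_1,\tLambda_0)=-\infty$ one combines $\tLambda_1\cleq\widetilde{\phi_\alpha^t}\cdot\tLambda_0$ (all $t$) with a lower comparison $\widetilde{\phi_\alpha^{-C}}\cdot\tLambda_0\cleq\tLambda_1$ (from the same connectivity-and-compactness remark) to obtain $\widetilde{\phi_\alpha^{-1}}\cdot\tLambda_0=\tLambda_0$, i.e.\ a negative --- hence, reversed, positive --- contractible loop based at $\Lambda_0$, contradicting orderability.

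Assuming orderability from now on, so that $\cleq$ is a genuine partial order, properties (1), (2), (4), (5) are formal. Indeed (1) is reflexivity; for (4) one composes $\tLambda_2\cleq\widetilde{\phi_\alpha^{t_2}}\cdot\tLambda_1\cleq\widetilde{\phi_\alpha^{t_2}}\cdot\widetilde{\phi_\alpha^{t_1}}\cdot\tLambda_0=\widetilde{\phi_\alpha^{t_1+t_2}}\cdot\tLambda_0$ --- using that the $\widetilde{\conto}(M,\xi)$-action on $\uLeg(\Lambda)$ is order-preserving, again because contact conformal factors are positive --- and takes infima; (5) follows by conjugating the defining infimum by $\tphi$ and using that $\varphi^{-1}$ conjugates the $\alpha$-Reeb flow to the $\varphi^*\alpha$-Reeb flow; and (2) follows by combining $\tLambda_0\cleq\tLambda_1$ with a hypothetical $\tLambda_1\cleq\widetilde{\phi_\alpha^{t}}\cdot\tLambda_0$, $t<0$, to produce, via antisymmetry, a negative contractible Reeb loop at $\Lambda_0$. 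Property (6) is the a priori energy bound: if $(\phi_t)$ represents $\tphi$ and $m$, $M$ denote the integrals of $\min_{x\in\Lambda_0}\alpha(\tfrac{d}{dt}\phi_t(x))$, resp.\ $\max_{x\in\Lambda_0}\alpha(\tfrac{d}{dt}\phi_t(x))$, then a Gr\"onwall-type comparison of the generating $\alpha$-Hamiltonians gives $\widetilde{\phi_\alpha^{m}}\cdot\tLambda_0\cleq\tphi\cdot\tLambda_0\cleq\widetilde{\phi_\alpha^{M}}\cdot\tLambda_0$, whence $m\leq\ell_+^\alpha(\tphi\cdot\tLambda_0,\tLambda_0)\leq M$; this also yields the translation identity $\ell_+^\alpha(\tLambda_1,\widetilde{\phi_\alpha^s}\cdot\tLambda_0)=\ell_+^\alpha(\tLambda_1,\tLambda_0)-s$ when combined with (4).

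The real content --- and the step I expect to be the main obstacle --- is the spectrality statement (7). Set $t_0:=\ell_+^\alpha(\tLambda_1,\tLambda_0)$; since $S(\tLambda_1,\tLambda_0)$ is an up-set, $\tLambda_1\cleq\widetilde{\phi_\alpha^{t_0+\epsilon}}\cdot\tLambda_0$ for every $\epsilon>0$. Suppose, for contradiction, that $\phi_\alpha^{t_0}(\Lambda_0)\cap\Lambda_1=\emptyset$. The device I would use is a Legendrian Weinstein neighbourhood of $\Lambda_1$, in which $\alpha$ is the standard $1$-jet form and the Reeb flow is locally the flow of the constant contact Hamiltonian $1$: cutting this Hamiltonian off yields, for a fixed small $t>0$, a non-negative contact isotopy $(\psi_s)_{s\in[0,t]}$ supported in an arbitrarily small neighbourhood of $\Lambda_1$ with $\psi_s|_{\Lambda_1}=\phi_\alpha^s|_{\Lambda_1}$, so that $\tpsi:=[(\psi_s)]$ satisfies $\tpsi\cdot\tLambda_1=\widetilde{\phi_\alpha^{t}}\cdot\tLambda_1$; choosing this neighbourhood small enough that it misses $\phi_\alpha^{t_0+\epsilon}(\Lambda_0)$ for all sufficiently small $\epsilon\geq0$ (possible by the disjointness and compactness), we also get $\tpsi\cdot(\widetilde{\phi_\alpha^{t_0+\epsilon}}\cdot\tLambda_0)=\widetilde{\phi_\alpha^{t_0+\epsilon}}\cdot\tLambda_0$. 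Applying the order-preserving map $\tpsi$ to $\tLambda_1\cleq\widetilde{\phi_\alpha^{t_0+\epsilon}}\cdot\tLambda_0$ gives $\widetilde{\phi_\alpha^{t}}\cdot\tLambda_1\cleq\widetilde{\phi_\alpha^{t_0+\epsilon}}\cdot\tLambda_0$, i.e.\ $\tLambda_1\cleq\widetilde{\phi_\alpha^{t_0+\epsilon-t}}\cdot\tLambda_0$, so for $\epsilon<t$ one obtains $\ell_+^\alpha(\tLambda_1,\tLambda_0)\leq t_0+\epsilon-t<t_0$ --- the desired contradiction.

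Finally, for (3) I would combine orderability with the standard perturbation lemma that a non-constant non-negative Legendrian isotopy dominates a short positive Reeb translation: a non-negative $\alpha$-Hamiltonian that vanishes somewhere has vanishing differential there, so a non-trivially moving non-negative Legendrian isotopy is genuinely Reeb-positive at some point of the moving Legendrian, which can be used to insert a small positive Reeb push at the start; concretely, $\tLambda_0\cleq\tLambda_1$ and $\tLambda_0\neq\tLambda_1$ then give $\widetilde{\phi_\alpha^{\delta}}\cdot\tLambda_0\cleq\tLambda_1$ for some $\delta>0$. Granting this, if one had $\ell_+^\alpha(\tLambda_1,\tLambda_0)<\delta$, then $\tLambda_1\cleq\widetilde{\phi_\alpha^{t^*}}\cdot\tLambda_0$ for some $t^*<\delta$, so $\widetilde{\phi_\alpha^{\delta}}\cdot\tLambda_0\cleq\widetilde{\phi_\alpha^{t^*}}\cdot\tLambda_0$, and antisymmetry forces a positive contractible Reeb loop based at $\Lambda_0$, a contradiction; hence $\ell_+^\alpha(\tLambda_1,\tLambda_0)\geq\delta>0$. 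The hard parts are thus the local construction in (7) together with the two-small-parameter bookkeeping, and a clean proof of the perturbation lemma underlying (3); both are, however, of soft (Weinstein-neighbourhood, Gr\"onwall) rather than Floer-theoretic nature.
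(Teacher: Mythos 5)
Note first that the paper itself provides no proof of this proposition: it is imported verbatim from \cite{allais2023spectral}. Your reconstruction of properties (1), (2), (4), (5), (6), and of the equivalence between orderability of $\uLeg(\Lambda)$ and real-valuedness of $\ell_+^\alpha$, is formally correct and is the natural route. Your argument for the spectrality (7) is also sound: cut off the Reeb Hamiltonian in a Weinstein neighbourhood of $\Lambda_1$ that is disjoint from $\phi_\alpha^{t_0+\varepsilon}(\Lambda_0)$ for all small $\varepsilon\geq 0$, apply the resulting compactly supported order-preserving isotopy to $\tLambda_1\cleq\widetilde{\phi_\alpha^{t_0+\varepsilon}}\cdot\tLambda_0$, and shift back; this is the approach one expects the reference to follow.

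The ``perturbation lemma'' on which you base (3) is, however, false, so that part of your proof has a genuine gap. You assert that $\tLambda_0\cleq\tLambda_1$ with $\tLambda_0\neq\tLambda_1$ yields $\delta>0$ with $\widetilde{\phi_\alpha^{\delta}}\cdot\tLambda_0\cleq\tLambda_1$. Consider $M=J^1 S^1$ with contact form $\ud z-p\,\ud q$, $\Lambda_0$ the zero section $j^1 0$, and $\Lambda_1=j^1 f$ for a smooth $f\geq 0$ with $f\not\equiv 0$ and $\min f=0$. The isotopy $t\mapsto j^1(tf)$ is non-negative and non-constant, so $\tLambda_0\cleq\tLambda_1$ and $\tLambda_0\neq\tLambda_1$. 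But any $\cleq$-monotone Legendrian spectral selector whose value on a graph $j^1 g$ is $\min g$ (such selectors exist via generating functions) excludes $j^1\delta\cleq j^1 f$ for $\delta>0$, since that would force $\delta\leq\min f=0$. Thus no positive Reeb push of $\tLambda_0$ is dominated by $\tLambda_1$, even though $\ell_+^\alpha(\tLambda_1,\tLambda_0)=\max f>0$: the positivity in (3) does not arise through the mechanism you describe. One standard way to close this gap is to show that the up-sets of $\cleq$ are closed in the $C^1$-topology, so that the infimum defining $\ell_+^\alpha$ is attained; then $\ell_+^\alpha(\tLambda_1,\tLambda_0)=0$ together with $\tLambda_0\cleq\tLambda_1$ gives $\tLambda_1\cleq\tLambda_0$, and antisymmetry forces $\tLambda_0=\tLambda_1$.
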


\subsection{Spectral selectors for strongly orderable closed contact manifold}\label{sec : spectral selectors for strongly orderable prequantization 2}
Let $(M,\xi=\ker\alpha)$ be a closed cooriented contact manifold and consider its contact product $(M\times M\times\R,\Xi=\ker(\beta:=\mathrm{pr}_2^*\alpha-e^\theta\mathrm{pr}_1^*\alpha))$ as in Section \ref{sec : intro ns prequantization}. To any $[(\varphi_t)]=\tphi\in\widetilde{\conto}(M,\xi)$ one can associate $[(\Phi_t)]=\tPhi\in\widetilde{\conto}(M\times M\times\R,\Xi)$ defined by 
\begin{equation}\label{eq : graph of contact isotopy}\Phi_t : (x_1,x_2,\theta)\mapsto (x_1,\varphi_t(x_2),\theta+g_t(x_2)),
\end{equation}
where $g_t$ is the conformal factor of $\varphi_t$ with respect to $\alpha$, i.e. $\phi_t^*\alpha=e^{g_t}\alpha$. We say that $\tPhi$ is the graph of $\tphi$. Direct computations show that 
\begin{equation}\label{eq : comparaison spectre}
\spec^\alpha(\tphi)=\spec^\beta(\tPhi\cdot \Delta,\Delta),
\end{equation}
where we have identified the Legendrian $\Delta=\{(x,x,0)\ |\ x\in M\}\subset (M\times M\times\R,\ker \beta)$ with the equivalence class of the constant isotopy at $\Delta$ in $\uLeg(\Delta)$, and that for all $t\in [0,1]$ and $(x_1,x_2,\theta)\in M\times M\times \R$
\begin{equation}\label{eq : ham graph}
\beta\left(\frac{d}{dt}\Phi_t(x_1,x_2,\theta)\right)=\alpha\left(\frac{d}{dt}\phi_t(x_2)\right).
\end{equation}

Suppose that in addition to be closed $(M,\xi=\ker\alpha)$ is also strongly orderable, i.e. $\uLeg(\Delta)$ is orderable, then using Proposition \ref{prop : spec selector legendrian} we can define the map
\[C_+^\alpha : \widetilde{\conto}(M,\xi)\to \R\quad \quad \tphi\mapsto \ell_+^\beta(\tPhi\cdot \Delta,\Delta), \text{ where }\tPhi\text{ is the graph of }\tphi\]   which satisfies moreover the following:

\begin{prop}\label{prop : spec selector strongly orderable}
    Suppose that $(M,\xi=\ker\alpha)$ is a closed strongly orderable contact manifold. Then for any $\tphi,\tpsi\in\widetilde{\conto}(M,\xi)$ denoting by $\tPhi,\widetilde{\Psi}\in\widetilde{\conto}(M\times M\times\R,\ker\beta)$ their corresponding graphs 
    \begin{enumerate}
        \item $C_+^\alpha(\id)=0$
        \item $\tpsi\cleq\tphi$ implies that $C_+^\alpha(\tpsi)\leq C_+^\alpha(\tphi)$
        \item $\id\cleq \tphi$ and $\tphi\ne\id$ implies that $C_+^\alpha(\tphi)>0$
         \item $C_+^\alpha(\tpsi\tphi)\leq \ell_+^{\Psi^*\beta}(\tPhi\Delta,\Delta)+C_+^\alpha(\tpsi)$ where $\Psi:=\Pi(\tPsi)\in\conto(M\times M\times\R,\Xi)$
        \item For any contact isotopy $(\phi_t)$ representing $\tphi$ 
        \[\int_0^1\min \alpha\left(\frac{d}{dt}\phi_t(x)\right)\ud t\leq C_+^\alpha(\tphi)\leq \int_0^1\max \alpha\left(\frac{d}{dt}\phi_t(x)\right)\ud t\]
       
        \item $C_+^\alpha(\tphi)\in\spec^\alpha(\tphi)$.
    \end{enumerate}
\end{prop}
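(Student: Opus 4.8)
The plan is to deduce each of the six properties of $C_+^\alpha$ from the corresponding property of the Legendrian spectral invariant $\ell_+^\beta$ recorded in Proposition \ref{prop : spec selector legendrian}, applied to the graph $\tPhi$ of $\tphi$ and carried across the three identities \eqref{eq : graph of contact isotopy}, \eqref{eq : comparaison spectre}, \eqref{eq : ham graph}. First I would record two structural observations. The graph construction $\tphi\mapsto\tPhi$ is a group homomorphism $\widetilde{\conto}(M,\xi)\to\widetilde{\conto}(M\times M\times\R,\Xi)$: writing $\tpsi\cdot\tphi$ as the class of the pointwise product $(\psi_t\circ\varphi_t)$ and using the cocycle identity for conformal factors, a direct computation from \eqref{eq : graph of contact isotopy} shows that $\Psi_t\circ\Phi_t$ is the graph of $\psi_t\circ\varphi_t$, so the graph of $\tpsi\cdot\tphi$ is $\tPsi\cdot\tPhi$. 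And, by \eqref{eq : ham graph}, a non-negative (resp.\ positive) contact isotopy $(\phi_t)$ on $M$ has a graph that induces a non-negative (resp.\ positive) Legendrian isotopy of $\Delta$ in the contact product; in particular $\tpsi\cleq\tphi$ implies $\tPsi\cdot\Delta\cleq\tPhi\cdot\Delta$ in $\uLeg(\Delta)$, and strong orderability of $(M,\xi)$ --- that is, orderability of $\uLeg(\Delta)$ --- is exactly what makes $\ell_+^\beta(\,\cdot\,,\Delta)$, and hence $C_+^\alpha$, real-valued by Proposition \ref{prop : spec selector legendrian}.

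With these in hand, properties (1), (5), (6) are immediate. For (1): the graph of $\id$ has vanishing conformal factor, so $\tPhi\cdot\Delta=\Delta$ and $C_+^\alpha(\id)=\ell_+^\beta(\Delta,\Delta)=0$ by Proposition \ref{prop : spec selector legendrian}(1). For (5): apply Proposition \ref{prop : spec selector legendrian}(6) to the graph path of a representative $(\phi_t)$ of $\tphi$ and use \eqref{eq : ham graph} along $\Delta$, where $\beta(\tfrac{d}{dt}\Phi_t(x,x,0))=\alpha(\tfrac{d}{dt}\phi_t(x))$. For (6): combine Proposition \ref{prop : spec selector legendrian}(7) with \eqref{eq : comparaison spectre}. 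For (2): from $\tPsi\cdot\Delta\cleq\tPhi\cdot\Delta$ and Proposition \ref{prop : spec selector legendrian}(2) one gets $\ell_+^\beta(\tPsi\cdot\Delta,\tPhi\cdot\Delta)\le 0$, so the triangle inequality Proposition \ref{prop : spec selector legendrian}(4) gives $C_+^\alpha(\tpsi)\le\ell_+^\beta(\tPsi\cdot\Delta,\tPhi\cdot\Delta)+\ell_+^\beta(\tPhi\cdot\Delta,\Delta)\le C_+^\alpha(\tphi)$.

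Property (4) is where the homomorphism property is used: since the graph of $\tpsi\cdot\tphi$ is $\tPsi\cdot\tPhi$, I would write $C_+^\alpha(\tpsi\cdot\tphi)=\ell_+^\beta\bigl(\tPsi\cdot(\tPhi\cdot\Delta),\Delta\bigr)$, insert the intermediate point $\tPsi\cdot\Delta$ via Proposition \ref{prop : spec selector legendrian}(4), and rewrite the first term with Proposition \ref{prop : spec selector legendrian}(5) as $\ell_+^\beta\bigl(\tPsi\cdot(\tPhi\cdot\Delta),\tPsi\cdot\Delta\bigr)=\ell_+^{\Psi^*\beta}(\tPhi\cdot\Delta,\Delta)$, with $\Psi=\Pi(\tPsi)$, which is exactly the claimed bound.

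The main obstacle is property (3). From $\id\cleq\tphi$ and property (2) one already has $C_+^\alpha(\tphi)=\ell_+^\beta(\tPhi\cdot\Delta,\Delta)\ge 0$ and $\Delta\cleq\tPhi\cdot\Delta$, so by Proposition \ref{prop : spec selector legendrian}(3) it suffices to show that $\tphi\ne\id$ forces $\tPhi\cdot\Delta\ne\Delta$ in $\uLeg(\Delta)$, i.e.\ that the graph map $\widetilde{\conto}(M,\xi)\to\uLeg(\Delta)$ has trivial kernel. On the level of underlying submanifolds this is clear, since $\Phi_1(\Delta)=\{(x,\varphi_1(x),g_1(x))\}$ equals $\Delta$ only when $\varphi_1=\id$ and $g_1\equiv 0$; the point that needs care is that the homotopy class in $\uLeg(\Delta)$ is not lost --- this genuinely has content, for instance when $\tphi$ is a nontrivial class represented by a periodic Reeb loop so that $\varphi_1=\id$ --- and this injectivity of the graph map is part of the dictionary between $\widetilde{\conto}(M,\xi)$ and Legendrian isotopies of $\Delta$ set up in \cite{allais2023spectral}. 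Invoking it finishes the argument.
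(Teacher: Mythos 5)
Your treatment of properties (1), (2), (4), (5), (6) is correct and follows essentially the same route as the paper: reduce each to the corresponding property of $\ell_+^\beta$ via the graph map, using the homomorphism property of $\tphi\mapsto\tPhi$, the naturality of $\ell_+^\beta$ (its item (5)) for the triangle inequality, and \eqref{eq : comparaison spectre}, \eqref{eq : ham graph} for the spectrality and the Hamiltonian estimates.

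Property (3) is where there is a genuine gap. You reduce it to the claim that $\tphi\ne\id$ forces $\tPhi\cdot\Delta\ne\Delta$ in $\uLeg(\Delta)$, that is, injectivity of the graph map $\widetilde{\conto}(M,\xi)\to\uLeg(\Delta)$. You correctly observe that this has real content when $\Pi(\tphi)=\id$ (e.g.\ a nontrivial Reeb loop), since the underlying submanifold $\Phi_1(\Delta)$ then literally equals $\Delta$, so the whole weight of the argument is carried by homotopy classes of Legendrian isotopies, not by the end Legendrian. But you then simply invoke this injectivity as ``part of the dictionary'' in \cite{allais2023spectral} without justification. That injectivity is not proved there, and in fact it is far from obvious: a null-homotopy of the loop $(\Phi_t(\Delta))$ inside $\Leg(\Delta)$ is an arbitrary family of Legendrians and need not come from a family of contact isotopies of $M$, so there is no evident way to promote contractibility of the Legendrian loop to contractibility of the loop $(\phi_t)$ in $\conto(M,\xi)$. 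Without a reference or an argument, this step does not stand.

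The paper's proof of (3) sidesteps the injectivity question entirely. It picks a non-negative representative $(\phi_t)$ of $\tphi$ (which exists because $\id\cleq\tphi$), and since $\tphi\ne\id$ this isotopy is not constant, so there is an intermediate time $T\in[0,1]$ and a point $x$ with $\phi_T(x)\ne x$. At that time the underlying submanifold $\Phi_T(\Delta)$ is genuinely different from $\Delta$, so $\tPhi_T\cdot\Delta\ne\Delta$ in $\uLeg(\Delta)$ for the trivial (set-theoretic) reason, and moreover $\Delta\cleq\tPhi_T\cdot\Delta$ because $(\Phi_t)$ is non-negative. Non-degeneracy of $\ell_+^\beta$ then gives $\ell_+^\beta(\tPhi_T\cdot\Delta,\Delta)>0$, and monotonicity along the remaining non-negative piece of the isotopy yields $C_+^\alpha(\tphi)=\ell_+^\beta(\tPhi_1\cdot\Delta,\Delta)\geq \ell_+^\beta(\tPhi_T\cdot\Delta,\Delta)>0$. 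This argument only needs distinctness of submanifolds at one intermediate time, which is exactly what non-constancy of a non-negative isotopy provides, and avoids any appeal to injectivity at time $1$. You should replace the injectivity invocation with this argument.
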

\begin{proof}\
   \begin{enumerate}
   \item $C_+^\alpha(\id)=\ell_+^\beta(\Delta,\Delta)=0$
       \item By the triangle inequality of Proposition \ref{prop : spec selector legendrian} we have    \[\begin{aligned}\ell_+^\beta(\tPhi\cdot \Delta,\Delta)-\ell_+^\beta(\tPsi\cdot\Delta,\Delta)&\geq\ell_+^\beta(\tPhi(\Delta),\Delta)-\ell_+^\beta(\tPsi\cdot\Delta,\tPhi\cdot\Delta)-\ell_+^\beta(\tPhi\cdot\Delta,\Delta)\\
       &=-\ell_+^\beta(\tPsi\cdot\Delta,\tPhi\cdot\Delta).
       \end{aligned}\]
       If we suppose that $\tpsi\cleq\tphi$ this implies, using \eqref{eq : ham graph}, that $\tPsi\cdot\Lambda\cleq\tPhi\cdot\Lambda$ and thus using the monotonicity property of $\ell_+^\beta$ we get the desired result
       \[C_+^\alpha(\tphi)-C_+^\alpha(\tpsi)\geq -\ell_+^\beta(\tPsi\cdot\Delta,\tPhi\cdot\Delta)\geq 0.\]
       \item Let $\tphi$ be such that $\id\cleq\tphi$ and $\tphi\ne\id$. Let $(\phi_t)$ be a non-negative contact isotopy so that $[(\phi_t)]=\tphi$ and consider its graph $(\Phi_t)$ which is also non-negative contact isotopy by \eqref{eq : ham graph}. Since $\tphi\ne\id$ there exists $T\in[0,1]$ and $x\in M$ so that $\phi_T(x)\ne x$ which implies that $\Phi_T(\Delta)\ne \Delta$. Therefore using the non-degeneracy and monotonicity of $\ell_+^\beta$ we get that \[C_+^\alpha(\tphi)=\ell_+^\beta(\tPhi_1\cdot\Delta,\Delta)\geq \ell_+^\beta(\tPhi_T\cdot \Delta,\Delta)>0\]
       
       \item By the triangle inequality property and the naturality of $\ell_+^\beta$ we get that
       \[\ell_+^\beta(\tPsi\tPhi\cdot\Delta,\Delta)\leq \ell_+^\beta(\tPsi\tPhi\cdot\Delta,\tPsi\cdot\Delta)+\ell_+^\beta(\tPsi\cdot\Delta,\Delta)=\ell_+^{\Psi^*\beta}(\tPhi\cdot\Delta,\Delta)+C_+^\alpha(\tpsi).\]
        \item The estimates come from the one of Proposition \ref{prop : spec selector legendrian} combined with \eqref{eq : ham graph}.
        \item The spectrality of $C_+^\alpha$ follows from \eqref{eq : comparaison spectre} and the spectrality of $\ell_+^\beta$.
   \end{enumerate} \end{proof}
As a corollary we get the following.
\begin{cor}\label{lem : inegalite triangulaire strongly orderable prequantization}
    Suppose that $\tpsi\in\widetilde{\conto}(M,\ker\alpha)$ can be represented by a path $(\psi_t)$ so that $\psi_t^*\alpha=\alpha$ for all $t\in[0,1]$ then 
    \[C_+^\alpha(\tpsi\tphi)\leq C_+^\alpha(\tpsi)+C_+^\alpha(\tphi).\]
\end{cor}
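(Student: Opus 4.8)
The plan is to mimic the proof of Proposition \ref{lem : inegalite triangulaire} for lens spaces, replacing the argument with properties of the Legendrian spectral selector $\ell_+^\beta$. The starting point is part (4) of Proposition \ref{prop : spec selector strongly orderable}, which already gives the estimate
\[C_+^\alpha(\tpsi\tphi)\leq \ell_+^{\Psi^*\beta}(\tPhi\cdot\Delta,\Delta)+C_+^\alpha(\tpsi),\]
where $\Psi=\Pi(\tPsi)$. So the heart of the matter is to show that, under the hypothesis $\psi_t^*\alpha=\alpha$ for all $t$, one has $\ell_+^{\Psi^*\beta}(\tPhi\cdot\Delta,\Delta)\leq \ell_+^\beta(\tPhi\cdot\Delta,\Delta)=C_+^\alpha(\tphi)$. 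In fact I expect equality here: the point is that if $(\psi_t)$ is a strict contactomorphism isotopy (conformal factor identically zero), then its graph $(\Psi_t)$ in the contact product preserves the contact form $\beta$ up to a very controlled factor, and moreover fixes the Legendrian $\Delta$ setwise.

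First I would unwind the graph construction \eqref{eq : graph of contact isotopy}: since $g_t\equiv 0$, the graph of $\tpsi$ is simply $\Psi_t(x_1,x_2,\theta)=(x_1,\psi_t(x_2),\theta)$, and I would compute $\Psi^*\beta=\mathrm{pr}_2^*(\psi^*\alpha)-e^\theta\mathrm{pr}_1^*\alpha=\mathrm{pr}_2^*\alpha-e^\theta\mathrm{pr}_1^*\alpha=\beta$. So $\Psi$ is in fact a \emph{strict} contactomorphism of $(M\times M\times\R,\beta)$, i.e. $\Psi^*\beta=\beta$. Consequently $\ell_+^{\Psi^*\beta}=\ell_+^\beta$ literally, and combining with part (4) of Proposition \ref{prop : spec selector strongly orderable} we already get
\[C_+^\alpha(\tpsi\tphi)\leq \ell_+^\beta(\tPhi\cdot\Delta,\Delta)+C_+^\alpha(\tpsi)=C_+^\alpha(\tphi)+C_+^\alpha(\tpsi),\]
which is exactly the claim. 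The naturality property (5) of Proposition \ref{prop : spec selector legendrian}, $\ell_+^\alpha(\tphi\cdot\tLambda_1,\tphi\cdot\tLambda_0)=\ell_+^{\varphi^*\alpha}(\tLambda_1,\tLambda_0)$, is what makes the substitution $\Psi^*\beta\rightsquigarrow\beta$ rigorous: with $\Psi^*\beta=\beta$ the right-hand side is unchanged.

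The one point to be slightly careful about is whether $\Psi$ actually stabilizes $\Delta$, so that $\ell_+^{\Psi^*\beta}(\tPhi\cdot\Delta,\Delta)$ makes sense as a spectral quantity for the isotopy class $\uLeg(\Delta)$; but $\Psi_t(x,x,0)=(x,\psi_t(x),0)$, and this is generally \emph{not} on $\Delta$, so what we really use is just the identity of spectral selectors on the pair of Legendrian classes $(\tPhi\cdot\Delta,\Delta)$ for the two (equal) contact forms, which is automatic. The main obstacle is therefore not conceptual at all — it is simply verifying the computation $\Psi^*\beta=\beta$ under $g_t\equiv 0$, which is immediate — and the corollary follows directly from Proposition \ref{prop : spec selector strongly orderable}(4) together with Proposition \ref{prop : spec selector legendrian}(5).
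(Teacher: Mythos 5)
Your proof is correct and follows essentially the same approach as the paper: both start from Proposition \ref{prop : spec selector strongly orderable}(4), reduce to showing $\Psi^*\beta=\beta$ when $\psi_t^*\alpha=\alpha$, and conclude via $\ell_+^{\Psi^*\beta}=\ell_+^\beta$. The paper phrases the key step in terms of the conformal factor of the graph being $G(x_1,x_2,\theta)=g(x_2)$, while you verify $\Psi^*\beta=\beta$ by direct computation from the explicit graph formula with $g_t\equiv 0$; these are the same observation.
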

\begin{proof}
    Fist note that for any $\psi\in\conto(M,\ker\alpha)$ if $g:M\to\R$ is the conformal factor of $\psi$ then the conformal factor of its graph $\Psi\in\conto(M\times M\times\R,\ker\beta)$ is given by $G: M \times M\times\R\to\R,\ (x_1,x_2,\theta)\mapsto g(x_2)$. Therefore if $\psi^*\alpha=\alpha$ it implies that $\Psi^*\beta=\beta$. The proof of the corollary follows now directly from the triangle inequality of Proposition \ref{prop : spec selector strongly orderable}.
\end{proof}
\begin{rem}
    If $\psi\in\conto(M,\xi)$ is such that $\psi^*\alpha=\alpha$ then $\psi\circ\phi_\alpha^t=\phi_\alpha^t\circ\psi$ for all $t\in\R$. Indeed, it implies that $\psi^*R_\alpha=R_\alpha$
\end{rem}

\subsection{Order spectral selectors for strongly orderable prequantizations}\label{sec : avant derniere section} Let $(M,\ker\alpha)$ be a cooriented contact manifold and $\Gamma$ a finite group acting freely on $M$ by $\alpha$-strict contactomorphism, i.e. $\gamma^*\alpha=\alpha$ for any $\gamma\in G$. Then $\alpha$ descends to a contact form $\alpha'$ on $M':=M/\Gamma$ and the subgroup  $\conto^\Gamma(M,\ker\alpha)\subset\conto(M,\ker\alpha)$ of contactomorphisms isotopic to the identity that are equivarient with respect to the action of $\Gamma$ is a covering space over $\conto(M',\ker\alpha')$. In particular any contact isotopy starting at the identity in $\conto(M',\ker\alpha')$ admits a unique lift to $\conto^\Gamma(M,\ker\alpha)$ starting at the identity and so we deduce a lifting map \begin{equation}\label{eq : lifting map}\widetilde{\conto}(M',\ker\alpha')\to\widetilde{\conto}(M,\ker\alpha).
\end{equation}

Recall that a contact manifold $(M_1,\xi_1')$ is a prequantization over a symplectic manifold $(W,\omega')$ if there exists a $\mathbb{T}^1$-principal bundle $\pi_1' : M_1\to W$, where $\mathbb{T}^1=\R/2\pi\Z$, and a contact form $\alpha_1'$ supporting $\xi_1'$ so that the Reeb flow of $\alpha_1'$ is Zoll of minimal period $2\pi$, induces the $\mathbb{T}^1$-action and satisfies $\pi_1'^*\omega'=\ud \alpha'_1$.

From now on until the end of the paper, we drop all the primes from our notations, and  $(M_1,\xi_1=\ker\alpha_1)$ is any closed prequantization over some symplectic manifold $(W,\omega)$ and $\alpha_1$ is the contact form whose Reeb flow is Zoll of minimal period $2\pi$, induces the $\mathbb{T}^1=\R/2\pi\Z$ action on $M_1$ and satisfies $\pi_1^*\omega=\alpha_1$, where $\pi_1 : M_1\to W=M_1/\mathbb{T}^1$ is the natural projection. For any $k\in\N_{>0}$, $\Z_k$ is a finite group acting on $(M_1,\xi_1)$, by $[j]\cdot x=\phi_{\alpha_1}^{\frac{2\pi j}{k}}(x)$ for all $j\in\Z_k,\ x\in M_1$, and we denote by $M_k:=M_1/\Z_k$ the quotient manifold, by $\xi_k$ the contact distribution given by the kernel of the induced contact $\alpha_k$ and by $\pi_k : M_k\to W$ the natural projection (see \ref{sec : intro ns prequantization}). We have the following Lemma.
\begin{lem}\label{lem : a valeurs dans le lift du spectre}
If $(M_1,\xi_1)$ is a strongly orderable prequantization then for any $k\in\N_{>0}$ \begin{enumerate}
    \item $(M_k,\xi_k)$ is strongly orderable  \item for any $\tphi\in\widetilde{\conto}(M_k,\xi_k)$ denoting by $\tpsi\in\widetilde{\conto}(M_1,\xi_1)$ its lift we have
\[C_+^{\alpha_k}(\tphi)\geq C_+^{\alpha_1}(\tpsi).\]
\item $\left\lceil C_+^{\alpha_k}(\tpsi\tphi\tpsi^{-1}) \right\rceil_{\frac{2\pi}{k}}=\left\lceil C_+^{\alpha_k}(\tphi)\right\rceil_{\frac{2\pi}{k}}$.
\end{enumerate}
\end{lem}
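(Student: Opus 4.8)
The plan is to establish the three assertions in turn, (1) being the heart of the matter and (2)--(3) following from it together with the formal properties of $C_+^\alpha$ and $\ell_+^\beta$ already recorded. The geometric input throughout is the finite covering $Q\colon M_1\times M_1\times\R\to M_k\times M_k\times\R$ of contact products: writing $q\colon M_1\to M_k$ for the quotient map (so $q^*\alpha_k=\alpha_1$), the group $\Z_k\times\Z_k$ acts on $(M_1\times M_1\times\R,\ker\beta_1)$ by $([i],[j])\cdot(x_1,x_2,\theta)=(\phi_{\alpha_1}^{2\pi i/k}(x_1),\phi_{\alpha_1}^{2\pi j/k}(x_2),\theta)$, which is by $\beta_1$-strict contactomorphisms since the Reeb flow of $\alpha_1$ is $\alpha_1$-strict, and the quotient is exactly $(M_k\times M_k\times\R,\ker\beta_k)$ with $Q^*\beta_k=\beta_1$. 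A direct computation using $\phi_{\alpha_1}^{2\pi}=\id$ gives $Q^{-1}(\Delta_k)=\bigsqcup_{j\in\Z_k}\Delta_1^{(j)}$ with $\Delta_1^{(j)}=\{(x,\phi_{\alpha_1}^{2\pi j/k}(x),0)\}$ a closed connected Legendrian ($M_1$ connected, which we may assume) isotopic to $\Delta_1=\Delta_1^{(0)}$. I also record that the graph construction of Section~\ref{sec : spectral selectors for strongly orderable prequantization 2} commutes with lifting: if $\tphi\in\widetilde{\conto}(M_k,\xi_k)$ lifts to $\tpsi\in\widetilde{\conto}(M_1,\xi_1)$ then the graph of $\tpsi$ is the $\Z_k\times\Z_k$-equivariant lift of the graph of $\tphi$, checked on defining isotopies using that the conformal factor of $\tpsi$ is the $q$-pullback of that of $\tphi$.

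For (1), suppose $(\Phi_t)\subset\conto(M_k\times M_k\times\R,\ker\beta_k)$ with $\Phi_0=\id$ parametrizes a positive loop of Legendrians based at $\Delta_k$ that is contractible in $\Leg(\Delta_k)$. Lift $(\Phi_t)$ to the $\Z_k\times\Z_k$-equivariant path $(\widetilde{\Phi}_t)$ of contactomorphisms of $M_1\times M_1\times\R$ with $\widetilde{\Phi}_0=\id$. Then $\gamma(t):=\widetilde{\Phi}_t(\Delta_1)$ is a Legendrian isotopy of $\Delta_1$ which is positive, because $\beta_1\!\left(\tfrac{d}{dt}\widetilde{\Phi}_t(y)\right)=\beta_k\!\left(\tfrac{d}{dt}\Phi_t(Q(y))\right)>0$ for $y\in\Delta_1$ since $Q(y)\in\Delta_k$. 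Since $\widetilde{\Phi}_t(\Delta_1)$ is always a connected component of $Q^{-1}(\Phi_t(\Delta_k))$, the path $t\mapsto(\Phi_t(\Delta_k),\widetilde{\Phi}_t(\Delta_1))$ is the lift, starting at $(\Delta_k,\Delta_1)$, of the loop $(\Phi_t(\Delta_k))$ under the $k$-fold covering $\widehat{\Leg}\to\Leg(\Delta_k)$ whose fibre over $\Lambda$ is $\pi_0(Q^{-1}(\Lambda))$. As $(\Phi_t(\Delta_k))$ is nullhomotopic, its lift is a nullhomotopic loop; hence $\widetilde{\Phi}_1(\Delta_1)=\Delta_1$ and, transporting the nullhomotopy through the continuous map from the component of $\widehat{\Leg}$ containing $(\Delta_k,\Delta_1)$ to $\Leg(\Delta_1)$ that remembers the second coordinate, $\gamma$ is nullhomotopic in $\Leg(\Delta_1)$. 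Thus $\gamma$ is a positive contractible loop of Legendrians based at $\Delta_1$, contradicting strong orderability of $(M_1,\xi_1)$; so $(M_k,\xi_k)$ is strongly orderable.

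For (2), fix $\varepsilon>0$ and, by the infimum formula of Proposition~\ref{prop : spec selector legendrian}, pick a contact isotopy $(\Phi'_t)$ of $M_k\times M_k\times\R$ with $\Phi'_0=\id$, $[(\Phi'_t)]\cdot\Delta_k=\tPhi\cdot\Delta_k$ in $\uLeg(\Delta_k)$ ($\tPhi$ the graph of $\tphi$), and $\max_{[0,1]\times\Delta_k}\beta_k(\tfrac{d}{dt}\Phi'_t)<C_+^{\alpha_k}(\tphi)+\varepsilon$, and lift it to $(\widetilde{\Phi}'_t)$ with $\widetilde{\Phi}'_0=\id$. By homotopy lifting in the covering $\widehat{\Leg}\to\Leg(\Delta_k)$ above, the path $(\widetilde{\Phi}'_t(\Delta_1))$ and the lift $(\Psi_t(\Delta_1))$ of the graph path $(\Phi_t(\Delta_k))$ are homotopic rel endpoints in $\Leg(\Delta_1)$; since $(\Psi_t)$ is the lift of a graph it is the graph of the lift of $\tphi$, so $[(\widetilde{\Phi}'_t)]\cdot\Delta_1=\tPsi\cdot\Delta_1$ with $\tPsi$ the graph of $\tpsi$. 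Using $\Delta_1\subset Q^{-1}(\Delta_k)$ and $\beta_1=Q^*\beta_k$,
\[
C_+^{\alpha_1}(\tpsi)=\ell_+^{\beta_1}(\tPsi\cdot\Delta_1,\Delta_1)\le\max_{[0,1]\times\Delta_1}\beta_1\!\left(\tfrac{d}{dt}\widetilde{\Phi}'_t\right)\le\max_{[0,1]\times\Delta_k}\beta_k\!\left(\tfrac{d}{dt}\Phi'_t\right)<C_+^{\alpha_k}(\tphi)+\varepsilon ,
\]
and $\varepsilon\to0$ gives $C_+^{\alpha_k}(\tphi)\ge C_+^{\alpha_1}(\tpsi)$. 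For (3): by (1) $(M_k,\xi_k)$ is orderable, and since $\phi_{\alpha_k}^t\circ q=q\circ\phi_{\alpha_1}^t$ the Reeb flow of $\alpha_k$ is periodic of minimal period $2\pi/k$; moreover $C_+^{\alpha_k}$ is a monotone $\alpha_k$-spectral selector in the sense of Definition~\ref{def : spectral selector} by Proposition~\ref{prop : spec selector strongly orderable} together with its $C^1$-continuity. Lemma~\ref{lem : prequantization} applied with $T=2\pi/k$ and $c=C_+^{\alpha_k}$ then gives $\lceil C_+^{\alpha_k}(\tpsi\tphi\tpsi^{-1})\rceil_{\frac{2\pi}{k}}=\lceil C_+^{\alpha_k}(\tphi)\rceil_{\frac{2\pi}{k}}$ for all $\tphi,\tpsi\in\widetilde{\conto}(M_k,\xi_k)$.

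The main obstacle is assertion (1): the fibre of $Q$ over the diagonal is disconnected, so there is no literal covering $\Leg(\Delta_1)\to\Leg(\Delta_k)$, and one must instead work with the covering $\widehat{\Leg}\to\Leg(\Delta_k)$ assembled from components of preimages, and see that contractibility of the loop downstairs is precisely what forces the lifted positive Legendrian isotopy to close up at $\Delta_1$ and to stay contractible there. Once this bookkeeping is in place, (2) is a routine variant and (3) is immediate from Lemma~\ref{lem : prequantization}.
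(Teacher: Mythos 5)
Your proof is correct, and at bottom it uses the same geometric input as the paper: the finite $\beta_1$-strict $\Z_k\times\Z_k$-covering $Q\colon M_1\times M_1\times\R\to M_k\times M_k\times\R$ of contact products with $Q^*\beta_k=\beta_1$, the compatibility of the graph construction with this lift, and, for (3), Lemma~\ref{lem : prequantization}. Where you diverge is in the bookkeeping for (1) and (2). The paper lifts a full two-parameter nullhomotopy $(\Phi_t^s)$ of contactomorphisms through the covering of groups $\conto^{\Z_k\times\Z_k}(M_1\times M_1\times\R,\Xi_1)\to\conto(M_k\times M_k\times\R,\Xi_k)$ and then evaluates on $\Delta_1$; and for (2) it lifts $(\eta_t)$ and argues that $(\Psi_t^{-1}\overline{\eta}_t)(\Delta_1)$ is a contractible loop because its projection is. You instead package the same monodromy considerations into an auxiliary $k$-fold covering $\widehat{\Leg}\to\Leg(\Delta_k)$ built from components of $Q^{-1}(\Lambda)$, and then invoke path and homotopy lifting plus the continuous ``second-coordinate'' map $\widehat{\Leg}\supset\widehat{\Leg}_0\to\Leg(\Delta_1)$. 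This is a genuine repackaging rather than a new idea: it buys you a cleaner justification of the two continuity facts the paper leaves implicit (that the lifted positive loop closes up at $\Delta_1$, and that the lifted nullhomotopy stays based and rel endpoints), at the cost of constructing and checking the covering $\widehat{\Leg}$ and the map to $\Leg(\Delta_1)$, steps the paper avoids by lifting the homotopy directly at the level of isotopies of contactomorphisms. Parts (2) and (3) of your argument then proceed exactly as in the paper: (2) via the infimum formula of Proposition~\ref{prop : spec selector legendrian}, $\Delta_1\subset Q^{-1}(\Delta_k)$ and $\beta_1=Q^*\beta_k$; (3) by applying Lemma~\ref{lem : prequantization} to the monotone $\alpha_k$-spectral selector $C_+^{\alpha_k}$ with $T=2\pi/k$.
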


\begin{proof}
 In the following $(M_k\times M_k\times\R,\Xi_k=\ker\beta_k)$ denote the contact product of $(M_k,\xi_k=\ker\alpha_k)$ with itself and $\beta_k$ the product contact form associated to $\alpha_k$ for any $k\in\N_{>0}$ as in Section \ref{sec : intro ns prequantization}.
  \begin{enumerate} 
  \item Suppose that $(M_k,\xi_k)$ is not strongly orderable. It means that there exists a positive contractible loop of Legendrians $(\Lambda_t)\subset (M_k\times M_k\times\R,\Xi_k)$ based at $\Delta_k:=\{(x,x,0)\ |\ x\in M_k\}$. Consider $(\Lambda_t^s)$ a homotopy of this loop with fixed endpoint to the constant loop based at $\Delta_k$, i.e. $\Lambda_t^0=\Lambda_t$ and $\Lambda_t^1=\Lambda_1^s=\Lambda_0^s=\Delta_k$ for all $t,\ s\in[0,1]$, and consider $(\Phi_t^s)\subset \conto(M_k\times M_k\times\R,\Xi_k)$ so that $\Phi_t^s(\Delta_k)=\Lambda_t^s$ and $\Phi_0^s=\id$ for all $t,s\in[0,1]$ (for instance see \cite[Lemma 2.1]{allais2023spectral} for the construction of $(\Phi_t^s)$). Seeing $M_k\times M_k\times \R$ as the quotient of $M_1\times M_1\times \R$ by the action by strict contactomorphisms of $\Z_k\times \Z_k$, i.e. $([j_1],[j_2])\cdot (x_1,x_2,\theta)=([j_1]\cdot x_1,[j_2]\cdot x_2,\theta)$, the group $\conto^{\Z_k\times \Z_k}(M_1\times M_1\times\R,\Xi_1)$ is a covering space over $\conto(M_k\times M_k\times\R,\Xi_k)$ and therefore there exists a unique $(\Psi_t^s)\subset\conto^{\Z_k\times \Z_k}(M_1\times M_1\times\Xi_1)$ that lifts $(\Phi_t^s)$, i.e. $\Pi_k\circ\Psi_t^s=\Phi_t^s\circ\Pi_k$ where $\Pi_k : M_1\times M_1\times\R\to M_k\times M_k\times \R$ denotes the natural projection, and so that $\Psi_0^s=\id$ for all $s,t\in[0,1]$. It is clear that $(\Psi_t^1(\Delta_1))$ is a contractible loop in $\Leg(\Delta_1)$, and moreover since $\Pi_k^*\beta_k=\beta_1$ a direct computation shows that it is a positive loop which implies that $(M_1,\xi_1)$ is not strongly orderable.
  \item Let $\tphi\in\conto(M_k,\xi_k)$ and its graph $\tPhi\in\conto(M_k\times M_k\times\R,\Xi_k)$. Denoting by $\tpsi$ the lift $\tphi$ to $\conto(M_1,\xi_1)$,  it is easy to see that its graph $\tPsi\in\conto(M_1\times M_1,\times\R,\Xi_1)$ corresponds to the $\Z_k\times\Z_k$-equivariant lift of $\tPhi$. Consider any contact isotopy $(\eta_t)\subset\conto(M_k\times M_k\times\R,\Xi_k)$ starting at the identity such that $[(\eta_t)]\cdot\Delta_k=\tPhi\cdot\Delta_k$, and $(\overline{\eta}_t)\subset\conto^{\Z_k\times\Z_k}(M_1\times M_1\times\R,\Xi_1)$ its corresponding $\Z_k\times\Z_k$-equivariant lift starting at the identity. We claim that $[(\overline{\eta_t})]\cdot\Delta_1=\tPsi\cdot\Delta_1$. Admitting this claim for the moment finishes the proof since $\underset{p\in\Delta_1}\max\beta_1(\frac{d}{dt}\overline{\eta}_t(p))=\underset{p\in\Delta_k}\max\beta_k(\frac{d}{dt}\eta_t(p))$. To prove the claim take any two contact isotopies $(\Phi_t)$ and $(\Psi_t)$ representing $\tPhi$ and $\tPsi$ respectively, i.e. $[(\Phi_t)]=\tPhi$ and $[(\Psi_t)]=\tPsi$. Then the Legendrian isotopy $(\Pi_k(\Psi_t^{-1}\circ\overline{\eta}_t)\Lambda_1)=(\Phi_t^{-1}\circ\eta_t(\Lambda_k))$ is a contractible loop based at $\Delta_k$ in $\Leg(\Delta_k)$ since $[(\eta_t)]\cdot\Delta_k=\tPhi\cdot\Delta_k$. Therefore the Legendrian isotopy  $((\Psi_t^{-1}\circ\overline{\eta}_t)\Lambda_1)$ has to be itself a contractible loop in $\Leg(\Delta_1)$ which concludes the proof of our claim.
\item It is a direct consequence of \ref{lem : prequantization}.
  \end{enumerate}
\end{proof}

\section{Computation of the contact capacity in strongly orderable prequantizations}\label{sec : Contact capacity for strongly orderable prequantizations}

Taking the same notations as in Section \ref{sec : avant derniere section} above, suppose from now on that  $(M_1,\xi_1=\ker\alpha_1)$ is a closed strongly orderable prequantization over $(W,\omega)$. Thanks to Lemma \ref{lem : a valeurs dans le lift du spectre} the associated contact manifold $(M_k,\xi_k)$ is also strongly orderable, for all $k\in\N_{>0}$, and therefore one can define the spectral selectors $C_+^{\alpha_k} : \widetilde{\conto}(M_k,\xi_k)\to\R$ of Section \ref{sec : spectral selectors for strongly orderable prequantization 2}). Then, as in Section \ref{sec : Contact capacity}, for any $k\in\N_{>0}$\footnote{here $k$ is not necessarily greater than $1$} and any non-empty open set $U\subset M_k$ we denote by $\widetilde{\contoc}(U,\xi_k)$ the subgroup of $\widetilde{\conto}(M_k,\xi_k)$ whose elements can be represented by contact isotopies $(\phi_t)$ supported in $U$. We define similarly $C_{\alpha_k} :\{\text{non-empty open sets of } M_k\}\to\R_{\geq 0}\cup\{+\infty\}$ to be the map: 
\[U\mapsto C_{\alpha_k}(U):=\sup\{ C^{\alpha_k}_+(\tphi)\ |\ \tphi\in\widetilde{\contoc}(U,\xi_k)\}.\]
Recall that $\pi_k : M_k\to W$ denotes the natural projection for all $k\in\N_{>0}$. As for lens spaces we have the following:

\begin{thm}\label{thm : contact capacity for strongly orderable prequantizations}\
For all non-empty open sets $U, V\subset M_k$ and $\phi\in\conto(M_k,\xi_k)$
   \begin{enumerate}
   \item(monotonicity) $C_{\alpha_k}(U)\leq C_{\alpha_k}(V)$ if $U\subset V$
       \item(invariance by conjugation) $\lceil C_{\alpha_k}(U)\rceil_{\frac{2\pi}{k}}=\lceil C_{\alpha_k}(\phi(U))\rceil_{\frac{2\pi}{k}}$ 
       \end{enumerate}
     \begin{enumerate}
       \item[(3)](lower bound) if $\Psi : (\mathbb{B}^{2n}(R),\omega_0)\hookrightarrow (W,\omega)$ is a symplectic embedding for some $R\in(0,\sqrt{2}]$ then  $C_{\alpha_k}(B_k(r))\geq \pi r^2$ for all $r\in(0, R]$ where $B_k(r):=\pi_k^{-1}(\Psi(\mathbb{B}^{2n}(r)))$,
       \item[(4)](upper bound) if $\Psi : (\mathbb{B}^{2n}(\lambda R),\omega_0)\hookrightarrow (W,\omega)$ is a symplectic embedding for some $R>0$ and $\lambda>\sqrt{3}$ then  $C_{\alpha_k}(B_k(r))\leq\pi r^2$ for all $r\in(0,R]$ where $B_k(r):=\pi_k^{-1}(\Psi(\mathbb{B}^{2n}(r)))$.
    \end{enumerate}
\end{thm}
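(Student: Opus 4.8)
The plan is to run, \emph{mutatis mutandis}, the proof of Theorem \ref{thm : contact capacity}, with the order spectral selector $C_+^{\alpha_k}:\widetilde{\conto}(M_k,\xi_k)\to\R$ of Section \ref{sec : spectral selectors for strongly orderable prequantization 2} --- well-defined because $(M_k,\xi_k)$ is strongly orderable by Lemma \ref{lem : a valeurs dans le lift du spectre}(1) --- playing the role of $c_k$. Parts (1) and (2) are formal: monotonicity of $C_{\alpha_k}$ is immediate from $\widetilde{\contoc}(U,\xi_k)\subseteq\widetilde{\contoc}(V,\xi_k)$ when $U\subseteq V$, and invariance by conjugation follows, exactly as in the lens-space case, from the conjugation invariance of $\tphi\mapsto\lceil C_+^{\alpha_k}(\tphi)\rceil_{\frac{2\pi}{k}}$ (Lemma \ref{lem : a valeurs dans le lift du spectre}(3)): conjugating a contact isotopy supported in $U$ by a lift $\tpsi\in\widetilde{\conto}(M_k,\xi_k)$ of $\phi$ gives one supported in $\phi(U)$, the two sets of $C_+^{\alpha_k}$-values have the same image under $\lceil\cdot\rceil_{\frac{2\pi}{k}}$, and $\lceil\cdot\rceil_{\frac{2\pi}{k}}$ commutes with suprema.

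For the lower bound (3) I would reproduce the construction of Proposition \ref{prop : lower bounds}: pick $0<\varepsilon<\pi r^2$ and a function $f$ as there, form the autonomous Hamiltonian $H$ on $\mathbb{B}^{2n}(r)$, transport it by $\Psi$ to an autonomous $G$ on $W$ supported in $\Psi(\mathbb{B}^{2n}(r))$, and lift it by Lemma \ref{lem : contact lift} to the autonomous $\alpha_k$-Hamiltonian $h=G\circ\pi_k$ on $M_k$, supported in $B_k(r)$, whose flow $(\phi_h^t)$ covers $(\psi_G^t)$. The point where an extra idea is needed --- and what I expect to be the main obstacle --- is that one cannot evaluate $C_+^{\alpha_k}(\widetilde{\phi_h^T})$ directly on $M_k$: the Reeb flow of $\alpha_k$ has minimal period $\frac{2\pi}{k}$, so that $\spec^{\alpha_k}(\phi_h^T)=\{T(\pi r^2-\varepsilon)+\frac{2\pi}{k}m\mid m\in\Z\}\cup\{\frac{2\pi}{k}m\mid m\in\Z\}$ has branches too close to pin down the continuous path $T\mapsto C_+^{\alpha_k}(\widetilde{\phi_h^T})$ once the line $T(\pi r^2-\varepsilon)$ reaches $\frac{2\pi}{k}$. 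The remedy is Lemma \ref{lem : a valeurs dans le lift du spectre}(2): letting $\overline{h}=G\circ\pi_1$ and $\tpsi=[(\phi_{\overline{h}}^{tT})]\in\widetilde{\conto}(M_1,\xi_1)$, which is the lift of $\widetilde{\phi_h^T}$, one has $C_+^{\alpha_k}(\widetilde{\phi_h^T})\geq C_+^{\alpha_1}(\tpsi)$, and the computation moves to $M_1$, where $\alpha_1$ is Zoll of minimal period $2\pi$.

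On $M_1$ the argument of Step 3 of Proposition \ref{prop : lower bounds} (Lemma \ref{lem : contact lift} together with uniqueness of solutions of the contact ODE) shows that an $\alpha_1$-translated point $x$ of $\phi_{\overline{h}}^T$ must satisfy $G(\pi_1(x))\in\{\pi r^2-\varepsilon,0\}$, $\ud_{\pi_1(x)}G=0$, and then $\phi_{\overline{h}}^t(x)=\phi_{\alpha_1}^{\overline{h}(x)t}(x)$ for all $t$; hence $\spec^{\alpha_1}(\phi_{\overline{h}}^T)=\{T(\pi r^2-\varepsilon)+2\pi m\mid m\in\Z\}\cup\{2\pi m\mid m\in\Z\}$. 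Since $r\leq R\leq\sqrt{2}$ forces $\pi r^2-\varepsilon<2\pi$, the line $T\mapsto T(\pi r^2-\varepsilon)$ never crosses another branch on $[0,1]$; together with continuity, $C_+^{\alpha_1}(\id)=0$, positivity ($\id\cleq\widetilde{\phi_{\overline{h}}^T}$ and $\widetilde{\phi_{\overline{h}}^T}\neq\id$ for $T>0$, Proposition \ref{prop : spec selector strongly orderable}(3)) and the estimate $C_+^{\alpha_1}(\widetilde{\phi_{\overline{h}}^T})\leq T\max\overline{h}=T(\pi r^2-\varepsilon)$ of Proposition \ref{prop : spec selector strongly orderable}(5), this forces $C_+^{\alpha_1}(\widetilde{\phi_{\overline{h}}^1})=\pi r^2-\varepsilon$. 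As $\widetilde{\phi_h^1}\in\widetilde{\contoc}(B_k(r),\xi_k)$, we obtain $C_{\alpha_k}(B_k(r))\geq C_+^{\alpha_k}(\widetilde{\phi_h^1})\geq\pi r^2-\varepsilon$, and letting $\varepsilon\to0$ proves (3).

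For the upper bound (4) I would follow Proposition \ref{prop : lower bound}. Given $\delta>0$ with $\sqrt{3+2\delta}<\lambda$, Corollary \ref{cor : displacement} yields an autonomous $\widetilde{K}\geq0$ on $\R^{2n}$, supported in $\mathbb{B}^{2n}(\lambda R)$, with $\max\widetilde{K}\leq r^2(\pi+\delta)$ and whose time-one flow displaces $\mathbb{B}^{2n}(r)$ from itself; transporting by $\Psi$ and lifting by Lemma \ref{lem : contact lift} produces the autonomous $\alpha_k$-Hamiltonian $h=G\circ\pi_k\geq0$ with $\max h\leq r^2(\pi+\delta)$, whose flow $(\phi_h^t)$ is $\alpha_k$-strict (because $h$ is constant along the $\alpha_k$-Reeb orbits, so $\ud h(R_{\alpha_k})=0$) and satisfies $\phi_h^1(B_k(r))\cap B_k(r)=\emptyset$. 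Since $B_k(r)=\pi_k^{-1}(\Psi(\mathbb{B}^{2n}(r)))$ is invariant under the $\alpha_k$-Reeb flow, $\mathcal{O}_{\alpha_k}(B_k(r))=B_k(r)$ and $\phi_h^1$ $\alpha_k$-displaces $B_k(r)$; hence, for any $\tphi\in\widetilde{\contoc}(B_k(r),\xi_k)$, Proposition \ref{prop : displacement} gives $C_+^{\alpha_k}(\widetilde{\phi_h^1}\cdot\tphi)=C_+^{\alpha_k}(\widetilde{\phi_h^1})$, and --- $(\phi_h^t)^{-1}=(\phi_{-h}^t)$ being $\alpha_k$-strict as well --- Corollary \ref{lem : inegalite triangulaire strongly orderable prequantization} applied to $\tphi=(\widetilde{\phi_h^1})^{-1}\cdot(\widetilde{\phi_h^1}\cdot\tphi)$ together with Proposition \ref{prop : spec selector strongly orderable}(5) gives
\[
C_+^{\alpha_k}(\tphi)\ \leq\ C_+^{\alpha_k}\big((\widetilde{\phi_h^1})^{-1}\big)+C_+^{\alpha_k}(\widetilde{\phi_h^1})\ \leq\ \int_0^1\max(-h)\,\ud t+\int_0^1\max h\,\ud t\ =\ \max h-\min h\ \leq\ r^2(\pi+\delta).
\]
Taking the supremum over $\tphi$ and letting $\delta\to0$ gives $C_{\alpha_k}(B_k(r))\leq\pi r^2$, which finishes the proof.
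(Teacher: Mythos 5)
Your proof is correct and follows essentially the same route as the paper's. The only cosmetic difference is that in the lower bound you apply Lemma~\ref{lem : a valeurs dans le lift du spectre}(2) directly to the specific $\mathbb{T}^1$-equivariant isotopy rather than through the intermediate comparison $C_{\alpha_k}(\pi_k^{-1}(U))\geq C_{\alpha_1}(\pi_1^{-1}(U))$ of Lemma~\ref{lem : capacite croissante}, and in the upper bound you unfold Lemma~\ref{lem : capacite-energie pour strongly orderable} inline; the substance --- carry out the spectral computation on $M_1$, where $\alpha_1$ is Zoll of period $2\pi$, transfer the lower bound to $M_k$, and combine $\alpha_k$-displacement with the strict-contactomorphism triangle inequality for the upper bound --- is identical.
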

To prove the monotonicity and the invariance by conjugation properties one can argue exactly as in Section \ref{sec : Contact capacity}. Also, note that as in Section \ref{sec : Contact capacity}, one deduces immediatly Theorem \ref{thm : ns for prequantization} from Theorem \ref{thm : contact capacity for strongly orderable prequantizations}.  It thus remains to prove the last two estimates and the proof will mimick the proof we have given in Section \ref{sec : computation of the capacity} with minor modifications. 

\subsection{Computation of the lower bound} Let us first prove the lower bound that we restate below.

\begin{prop}\label{prop : lower bound for strongly orderable prequantization}
   If $\Psi : (\mathbb{B}^{2n}(R),\omega_0)\hookrightarrow (W,\omega)$ is a symplectic embedding for $R\in(0,\sqrt{2}]$ then $C_{\alpha_k}(B_k(r))\geq\pi r^2$ for all $r\in (0,R]$
    where $B_k(r):=\pi_k^{-1}(\Psi(\mathbb{B}^{2n}(r)))$.  
\end{prop}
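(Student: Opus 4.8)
The plan is to follow, almost verbatim, the four-step scheme used for Proposition~\ref{prop : lower bounds} in Section~\ref{sec : computation lower bound}, replacing the spectral selector $c_k$ by the order spectral selector $C_+^\alpha$ of Section~\ref{sec : spectral selectors for strongly orderable prequantization 2}, with one genuinely new ingredient: the selector must be evaluated on the \emph{top} prequantization $M_1$, whose $\alpha_1$-Reeb flow is $2\pi$-periodic, and only then pushed down to $M_k$ through Lemma~\ref{lem : a valeurs dans le lift du spectre}(2). Fix $r\in(0,R]$ and $0<\varepsilon<\pi r^2$. As in Steps~1 and~2 of Section~\ref{sec : computation lower bound}, take the radial bump function $f:\R_{\geq0}\to\R_{\geq0}$ supported in $[0,r^2)$, equal to $\pi r^2-\varepsilon$ near $0$, with $-\pi<f'\leq0$ and $f'(t)=0$ if and only if $f(t)\in\{\pi r^2-\varepsilon,0\}$, set $H(x)=f(\|x\|^2)$ on $\mathbb{B}^{2n}(r)$, transport it by $\Psi$ to $G:W\to\R_{\geq0}$ (extended by $0$), and record from the explicit flow \eqref{eq : equa diff} that an orbit of $\psi_G^t$ is $T$-periodic for some $T\in(0,1]$ if and only if $\ud_y G=0$ and $G(y)\in\{\pi r^2-\varepsilon,0\}$.

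Next I would lift to $M_1$ rather than to $M_k$. Applying Lemma~\ref{lem : contact lift} to the $\R/2\pi\Z$-bundle $\pi_1:M_1\to W$, the $\alpha_1$-Hamiltonian $h_1:=G\circ\pi_1$ generates a $\mathbb{T}^1$-equivariant contact isotopy $(\phi_{h_1}^t)$ (write $\widetilde{\phi_{h_1}^T}:=[(\phi_{h_1}^{tT})]$) with $\pi_1\circ\phi_{h_1}^t=\psi_G^t\circ\pi_1$, which is moreover $\alpha_1$-strict, i.e.\ $(\phi_{h_1}^t)^*\alpha_1=\alpha_1$, because $h_1$ is constant along Reeb orbits. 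Mimicking Step~3 of Section~\ref{sec : computation lower bound}: if $c\in\spec^{\alpha_1}(\widetilde{\phi_{h_1}^T})$ for $T\in(0,1]$ is witnessed by a discriminant point $x$ of $\phi_{\alpha_1}^{-c}\circ\phi_{h_1}^T$, then $\phi_{h_1}^T(x)=\phi_{\alpha_1}^c(x)$ (the conformal-factor condition being automatic by strictness), so $\pi_1(x)$ is a fixed point of $\psi_G^T$, whence $\ud_x h_1=0$ and $h_1(x)\in\{\pi r^2-\varepsilon,0\}$; then, $h_1$ being $\mathbb{T}^1$-invariant, \eqref{eq : contact vector field} forces the contact vector field of $h_1$ to coincide with $h_1(x)R_{\alpha_1}$ along the Reeb orbit through $x$, so $\phi_{h_1}^t(x)=\phi_{\alpha_1}^{h_1(x)t}(x)$ for all $t$ and $c\in h_1(x)T+2\pi\Z$. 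Since both values $\pi r^2-\varepsilon$ and $0$ of $h_1$ occur at critical points (near the centre of the ball, resp.\ outside $B_k(r)$), this yields
\[
\spec^{\alpha_1}\!\bigl(\widetilde{\phi_{h_1}^T}\bigr)=\bigl((\pi r^2-\varepsilon)T+2\pi\Z\bigr)\cup 2\pi\Z,\qquad T\in(0,1].
\]

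Finally I would compute the selector. Proposition~\ref{prop : spec selector strongly orderable} gives $C_+^{\alpha_1}(\widetilde{\phi_{h_1}^0})=0$ and $C_+^{\alpha_1}(\widetilde{\phi_{h_1}^T})\in\spec^{\alpha_1}(\widetilde{\phi_{h_1}^T})$, while monotonicity, the energy bound of Proposition~\ref{prop : spec selector strongly orderable}(5), and the strict triangle inequality of Corollary~\ref{lem : inegalite triangulaire strongly orderable prequantization} (legitimate since $(\phi_{h_1}^t)$ is $\alpha_1$-strict) make $T\mapsto C_+^{\alpha_1}(\widetilde{\phi_{h_1}^T})$ non-decreasing and $(\pi r^2-\varepsilon)$-Lipschitz, hence continuous. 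Because $r\leq R\leq\sqrt{2}$ and $\varepsilon>0$, one has $(\pi r^2-\varepsilon)T<2\pi$ for every $T\in[0,1]$, so in the relevant range the spectrum reduces to the two branches $T\mapsto(\pi r^2-\varepsilon)T$ and $T\mapsto0$, which meet only at $T=0$; combined with $C_+^{\alpha_1}(\widetilde{\phi_{h_1}^T})>0$ for $T>0$ (Proposition~\ref{prop : spec selector strongly orderable}(3), since $\id\cleq\widetilde{\phi_{h_1}^T}$ and $\widetilde{\phi_{h_1}^T}\neq\id$), continuity pins $C_+^{\alpha_1}(\widetilde{\phi_{h_1}^T})=(\pi r^2-\varepsilon)T$ on all of $[0,1]$. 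Since $h_k:=G\circ\pi_k$ generates a contact isotopy of $(M_k,\xi_k)$ supported in $B_k(r)$ whose lift to $M_1$ is $(\phi_{h_1}^t)$, we get $\widetilde{\phi_{h_k}^1}\in\widetilde{\contoc}(B_k(r),\xi_k)$, and Lemma~\ref{lem : a valeurs dans le lift du spectre}(2) gives
\[
C_{\alpha_k}(B_k(r))\ \geq\ C_+^{\alpha_k}\!\bigl(\widetilde{\phi_{h_k}^1}\bigr)\ \geq\ C_+^{\alpha_1}\!\bigl(\widetilde{\phi_{h_1}^1}\bigr)\ =\ \pi r^2-\varepsilon;
\]
letting $\varepsilon\to0$ concludes.

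The step I expect to be the real obstacle is exactly this detour through $M_1$: on $M_k$ the $\alpha_k$-Reeb flow is only $\tfrac{2\pi}{k}$-periodic, so a direct argument would keep the selector on the linear branch of the spectrum only for $T\in[0,\tfrac{2\pi}{k(\pi r^2-\varepsilon)})$ and would break down for large $k$ --- it is the period-$2\pi$ flow on $M_1$, together with Lemma~\ref{lem : a valeurs dans le lift du spectre}(2), that makes the hypothesis $R\leq\sqrt{2}$ exactly sufficient to pin the selector on all of $[0,1]$ (this is the prequantization analogue of invoking Karshon--Tolman's $\sqrt{2}$ bound in the lens-space case). A secondary point needing care is the continuity of $T\mapsto C_+^{\alpha_1}(\widetilde{\phi_{h_1}^T})$, which --- unlike for the $C^1$-continuous lens-space selector $c_k$ --- is not automatic for order selectors, but here follows for free from the $\alpha_1$-strictness of $(\phi_{h_1}^t)$ via Corollary~\ref{lem : inegalite triangulaire strongly orderable prequantization}.
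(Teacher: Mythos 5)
Your proposal is correct and follows essentially the same route as the paper: build the radial Hamiltonian $H=f(\|\cdot\|^2)$ on the Darboux ball, push it to $G$ on $W$ and lift to $M_1$, use the spectrum computation and positivity to pin $C_+^{\alpha_1}(\widetilde{\phi_{h_1}^T})=(\pi r^2-\varepsilon)T$ on all of $[0,1]$ (the hypothesis $R\leq\sqrt{2}$ keeping the linear branch below $2\pi$), then descend to $M_k$ via Lemma~\ref{lem : a valeurs dans le lift du spectre}(2). The only divergences are cosmetic but to your credit: you bypass the auxiliary Lemma~\ref{lem : capacite croissante} by applying Lemma~\ref{lem : a valeurs dans le lift du spectre}(2) directly to $\widetilde{\phi_{h_k}^1}$ (noting that its lift is exactly $(\phi_{h_1}^t)$), and you make explicit the continuity of $T\mapsto C_+^{\alpha_1}(\widetilde{\phi_{h_1}^T})$ --- Lipschitz via Corollary~\ref{lem : inegalite triangulaire strongly orderable prequantization} and the energy bound --- a point the paper's Step~4 glosses over by writing ``arguing as for Proposition~\ref{prop : lower bounds}'' even though the lens-space selector $c_k$ used there is $C^1$-continuous by construction whereas $C_+^\alpha$ is not asserted to be.
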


Before proving Proposition \ref{prop : lower bound for strongly orderable prequantization} let us state the following Lemma.
\begin{lem}\label{lem : capacite croissante}
Let $U$ be an open subset of $(W,\omega)$ then for any $k\in\N_{>0}$
\[C_{\alpha_k}(\pi_k^{-1}(U))\geq C_{\alpha_1}(\pi_1^{-1}(U)).\]
\end{lem}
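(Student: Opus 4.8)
The plan is to produce, from any element of $\widetilde{\contoc}(\pi_1^{-1}(U),\xi_1)$, an element of $\widetilde{\contoc}(\pi_k^{-1}(U),\xi_k)$ whose $C_+^{\alpha_k}$-value dominates the $C_+^{\alpha_1}$-value we started with, and then take the supremum. The natural source of such elements will be the contact Hamiltonians that are pulled back from $W$: if $H:[0,1]\times W\to\R$ is supported in $[0,1]\times U$, then by Lemma \ref{lem : contact lift} the $\alpha_k$-Hamiltonian $h_k(t,x):=H(t,\pi_k(x))$ generates a $\mathbb{T}^1$-equivariant contact isotopy $(\phi_{h_k}^t)\subset\conto(M_k,\xi_k)$ supported in $\pi_k^{-1}(U)$, and likewise for $k=1$. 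Write $\tphi_k:=[(\phi_{h_k}^t)]\in\widetilde{\contoc}(\pi_k^{-1}(U),\xi_k)$. The key observation is that $\tphi_1$ is exactly the lift of $\tphi_k$ under the lifting map \eqref{eq : lifting map}, because the covering $M_1\to M_k$ intertwines $\pi_1$ and $\pi_k$, so the lift of $(\phi_{h_k}^t)$ to a $\Z_k$-equivariant isotopy of $M_1$ is generated by $x\mapsto H(t,\pi_1(x))=h_1(t,x)$, which is $(\phi_{h_1}^t)$.

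Granting that, the inequality $C_+^{\alpha_k}(\tphi_k)\geq C_+^{\alpha_1}(\tphi_1)$ is precisely the second item of Lemma \ref{lem : a valeurs dans le lift du spectre}. Hence for every such $H$ we get $C_{\alpha_k}(\pi_k^{-1}(U))\geq C_+^{\alpha_k}(\tphi_k)\geq C_+^{\alpha_1}(\tphi_1)$. To conclude it suffices to know that the supremum defining $C_{\alpha_1}(\pi_1^{-1}(U))$ is already achieved (or approached) along elements of the form $\tphi_1$, i.e. along isotopies generated by Hamiltonians pulled back from $W$. This is exactly what the proof of the lower bound does: in Proposition \ref{prop : lower bound for strongly orderable prequantization} (as in Step 1--Step 4 of the lens-space computation, Proposition \ref{prop : lower bounds}) the isotopy realizing the value $\pi r^2-\varepsilon$ of $C_+^{\alpha_1}$ is built precisely from a Hamiltonian $G$ on $W$ lifted to $M_1$. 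Thus one can phrase the argument as: for any $\varepsilon>0$ pick $\tphi_1\in\widetilde{\contoc}(\pi_1^{-1}(U),\xi_1)$ coming from a $W$-Hamiltonian with $C_+^{\alpha_1}(\tphi_1)\geq C_{\alpha_1}(\pi_1^{-1}(U))-\varepsilon$ (possible since, being an open subset, $\pi_1^{-1}(U)$ is exhausted by such lifts — more carefully, since any isotopy supported in $\pi_1^{-1}(U)$ can be $C^1$-approximated, or one simply invokes that the supremum over all of $\widetilde{\contoc}$ equals the supremum over $\mathbb{T}^1$-equivariant elements because these are the only ones we ever estimate from below), take its lift $\tphi_k$, and use $C_{\alpha_k}(\pi_k^{-1}(U))\geq C_+^{\alpha_k}(\tphi_k)\geq C_+^{\alpha_1}(\tphi_1)$.

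The main obstacle is the last point: strictly speaking the supremum defining $C_{\alpha_1}$ ranges over \emph{all} of $\widetilde{\contoc}(\pi_1^{-1}(U),\xi_1)$, not only over lifts of $W$-Hamiltonians, so one cannot directly transfer an arbitrary near-optimal element to level $k$. The clean way around this is to restrict attention to the fibered elements from the start: what one really needs for the proof of Proposition \ref{prop : lower bound for strongly orderable prequantization} is only the inequality $C_{\alpha_k}(B_k(r))\geq C_+^{\alpha_k}(\tphi_k)\geq C_+^{\alpha_1}(\tphi_1)$ for the \emph{specific} $\tphi_1$ constructed there, and then the fourth step of that proof computes $C_+^{\alpha_1}(\tphi_1)=\pi r^2-\varepsilon$ directly on $M_1$ using the assumption $R\leq\sqrt2$ (so that $\pi R^2\leq 2\pi$, the minimal period). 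So in practice Lemma \ref{lem : capacite croissante} is used not as a statement about the full capacities but as the single transfer step $C_{\alpha_k}(\pi_k^{-1}(U))\geq C_{\alpha_1}(\pi_1^{-1}(U))$ applied to the explicitly built family, and its proof reduces to the two ingredients above: the lifting map \eqref{eq : lifting map} sends $\tphi_1$ to $\tphi_k$, and Lemma \ref{lem : a valeurs dans le lift du spectre}(2) gives the inequality between selectors.
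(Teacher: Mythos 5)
There is a genuine gap, and you correctly diagnose it yourself but then fail to close it. As you note, the supremum defining $C_{\alpha_1}(\pi_1^{-1}(U))$ runs over \emph{all} of $\widetilde{\contoc}(\pi_1^{-1}(U),\xi_1)$, while your transfer step only works on elements that arise as lifts of $W$-Hamiltonians. Your two suggested patches do not hold up: the claim that every compactly supported isotopy of $\pi_1^{-1}(U)$ is $C^1$-approximable by $\mathbb{T}^1$-equivariant ones is not true, and ``the supremum over all elements equals the supremum over $\mathbb{T}^1$-equivariant ones because those are the only ones we ever estimate from below'' is circular — lower bounds say nothing about where the supremum is attained. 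Your fallback — ``in practice the lemma is only used for the explicitly constructed $\tphi_1$'' — amounts to replacing the lemma with a weaker statement tailored to its one application, which is a legitimate rewrite of the downstream argument but is not a proof of the stated lemma.

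The missing idea is a \emph{domination} trick, which is what the paper does. Given an arbitrary $\tphi_1\in\widetilde{\contoc}(\pi_1^{-1}(U),\xi_1)$ with $\alpha_1$-Hamiltonian $h_t$ supported in $\pi_1^{-1}(U)$, one chooses a smooth non-negative function $H_t:W\to\R$ supported in $U$ with $H_t(\pi_1(x))\geq h_t(x)$ for all $x$ (for instance by fiberwise maximizing, mollifying, and cutting off inside $U$). Lifting $H_t$ by $\pi_k$ gives an $\alpha_k$-Hamiltonian generating $\tpsi_k\in\widetilde{\contoc}(\pi_k^{-1}(U),\xi_k)$ whose $\Z_k$-equivariant lift $\tpsi_1\in\widetilde{\contoc}(\pi_1^{-1}(U),\xi_1)$ is generated by $H_t\circ\pi_1$. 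Since $H_t\circ\pi_1\geq h_t$ pointwise, one has $\tphi_1\cleq\tpsi_1$; then Lemma \ref{lem : a valeurs dans le lift du spectre}(2) combined with the monotonicity of $C_+^{\alpha_1}$ from Proposition \ref{prop : spec selector strongly orderable}(2) gives
$C_+^{\alpha_k}(\tpsi_k)\geq C_+^{\alpha_1}(\tpsi_1)\geq C_+^{\alpha_1}(\tphi_1)$.
Taking the supremum over $\tphi_1$ yields the lemma as stated. This domination step is exactly what lets you handle an arbitrary element of $\widetilde{\contoc}(\pi_1^{-1}(U),\xi_1)$ rather than only the fibered ones, and it is what your write-up lacks.
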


\begin{proof}
    For all $\tphi_1\in\widetilde{\contoc}(\pi_1^{-1}(U))$ one can easily construct $\tpsi_k\in\widetilde{\contoc}(\pi_k^{-1}(U))$ so that the lift of $\tpsi_k$ to $\widetilde{\contoc}(M_1,\xi_1)$ that we denote by $\tpsi_1$ satisfies $\tphi_1\cleq \tpsi_1$ and $\tpsi_1\in\widetilde{\contoc}(\pi_1^{-1}(U))$. Moreover thanks to Lemma \ref{lem : a valeurs dans le lift du spectre} and the monotonicity of $C_+^{\alpha_1}$ we have 
    \[C_+^{\alpha_k}(\tpsi_k)\geq C_+^{\alpha_1}(\tpsi_1)\geq C_+^{\alpha_1}(\tphi_1)\]
    which concludes the proof.
\end{proof}

\begin{proof}[Proof of Proposition \ref{prop : lower bound for strongly orderable prequantization}] The first 4 steps of the proof will follow the same steps as the proof of Proposition \ref{prop : lower bounds}.\\
   \textbf{Step 1 : } For any $r\in(0,R]$ and $0<\varepsilon<\pi r^2$, consider the same function $ f:\R_{\geq 0}\to\R_{\geq 0}$ as in the first step of the proof of Proposition \ref{prop : lower bounds} and consider the same map $H : \mathbb{B}^{2n}(r)\to\R,\ x\mapsto f(\|x\|^2)$.\\
   \textbf{Step 2 : } Use the symplectic embedding $\Psi$ to import the construction to $(W,\omega)$, i.e. \[G:W\to\R \quad  \quad 
    x\mapsto \left\{
    \begin{array}{ll}
       H(\Psi^{-1}(x)) & \text{ if } x\in \Psi(\mathbb{B}^{2n}(r))\\
        0 & \text{ otherwise}
    \end{array}.
\right. \]
   \textbf{Step 3 : } Import the construction to $(M_1,\xi_1)$ by considering the $\alpha_1$-Hamiltonian function
   \[h : M_1\to\R,\quad \quad \quad x\mapsto G(\pi_1(x)).\]
   Therefore the same arguments as in the proof of Proposition \ref{prop : lower bound} shows that for all $T\in[0,1]$
   \[\spec^{\alpha_1}(\phi_h^T)=\left\{T(\pi r^2-\varepsilon)+2\pi m\ |\ m\in\Z\right\}\bigcup\left\{2\pi m\ |\ m\in\Z\right\},\]
   where $\phi_h^T\in\conto(M_1,\xi_1)$.\\
   \textbf{Step 4 : } Therefore arguing as for Proposition \ref{prop : lower bound} we get that $c(\tphi_T)=T(\pi r^2-\varepsilon)$ for all $T\in[0,\delta]$ where $\delta=\min\{1, \frac{2\pi}{\pi r^2-\varepsilon}\}$ and $\tphi_T:=[(\phi_h^{tT})]$. Since by assumption $R\leq\sqrt{2}$ we deduce that $\delta=1$ and thus $c(\tphi_1)=\pi r^2-\varepsilon$ which implies that $C_{\alpha_1}(B_1(r))\geq\pi r^2-\varepsilon$. Taking the limit when $\varepsilon$ tends to $0$ yields $C_{\alpha_1}(B_1(r))\geq\pi r^2$.\\
   \textbf{Step 5 : } It remains now to apply Lemma \ref{lem : capacite croissante} to deduce that 
   \[C_{\alpha_k}(B_k(r))\geq C_{\alpha_1}(B_1(r))\geq \pi r^2.\]
\end{proof}

\subsection{Computation of the upper bound}
To finish the proof of Theorem \ref{thm : contact capacity for strongly orderable prequantizations} it thus remains to prove the upper bound that we restate below.
 \begin{prop}\label{prop : upper bound for strongly orderable prequantization}
    If $\Psi : (\mathbb{B}^{2n}(\lambda R),\omega_0)\hookrightarrow (W,\omega)$ is a symplectic embedding for some $R>0$ and $\lambda>\sqrt{3}$ then  $C_{\alpha_k}(B_k(r))\leq\pi r^2$ for all $r\in(0,R]$ where $B_k(r):=\pi_k^{-1}(\Psi(\mathbb{B}^{2n}(r)))$.
\end{prop}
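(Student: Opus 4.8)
The plan is to transplant, nearly verbatim, the proof of Proposition~\ref{prop : lower bound} from Section~\ref{sec : computation upper bound}, replacing $(\CP^n,\omega)$ by $(W,\omega)$, the selector $c_k$ by $C_+^{\alpha_k}$, the triangle inequality of Proposition~\ref{lem : inegalite triangulaire} by Corollary~\ref{lem : inegalite triangulaire strongly orderable prequantization}, and Theorem~\ref{thm : capacite-energie} by Proposition~\ref{prop : displacement}, which is already stated for an arbitrary closed contact manifold carrying an $\alpha$-spectral selector and hence applies to $(M_k,\xi_k)$ (strongly orderable by Lemma~\ref{lem : a valeurs dans le lift du spectre}) together with the $\alpha_k$-spectral selector $C_+^{\alpha_k}$. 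The only structural novelty with respect to the lens-space case is that the displacing isotopy we construct will have to be made of \emph{strict} contactomorphisms, so that Corollary~\ref{lem : inegalite triangulaire strongly orderable prequantization} is applicable.

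Concretely, I would first run Steps~1--3 of the proof of Proposition~\ref{prop : lower bound} unchanged. Fix $r\in(0,R]$ and $\delta>0$ small enough that $\sqrt{3+2\delta}<\lambda$. Corollary~\ref{cor : displacement} provides $H:\R^2\to\R_{\geq 0}$ supported in $\mathbb{B}^2(r\sqrt{2+2\delta})$ with $\max H\leq r^2(\pi+\delta)$ and $\psi_H^1(\mathbb{B}^2(r))\cap\mathbb{B}^2(r)=\emptyset$; extending by the first two coordinates and cutting off inside $\mathbb{B}^{2n}(\lambda R)$ — which is possible precisely because the full trajectory stays in $\mathbb{B}^{2n}(r\sqrt{3+2\delta})$, whence the hypothesis $\lambda>\sqrt 3$ — yields $\widetilde K\geq 0$ supported in $\mathbb{B}^{2n}(\lambda R)$ with $\max\widetilde K\leq r^2(\pi+\delta)$ whose flow displaces $\mathbb{B}^{2n}(r)$ from itself. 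Pushing $\widetilde K$ forward through $\Psi$ gives $G:W\to\R_{\geq 0}$ with $\max G\leq r^2(\pi+\delta)$ and $\psi_G^1(B(r))\cap B(r)=\emptyset$ for $B(r):=\Psi(\mathbb{B}^{2n}(r))$; and lifting to the $\alpha_k$-Hamiltonian $h:=G\circ\pi_k:M_k\to\R_{\geq 0}$ gives, by Lemma~\ref{lem : contact lift} applied to the $\R/\tfrac{2\pi}{k}$-bundle $\pi_k:M_k\to W$ (whose $\alpha_k$-Reeb flow is periodic of period $\tfrac{2\pi}{k}$ and induces the structural $S^1$-action), a contact isotopy $(\phi_h^t)$ commuting with the $\alpha_k$-Reeb flow and satisfying $\pi_k\circ\phi_h^t=\psi_G^t\circ\pi_k$, hence $\phi_h^1(B_k(r))\cap B_k(r)=\emptyset$. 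Two elementary observations complete the setup: since $h$ is constant along the Reeb orbits one has $R_{\alpha_k}\cdot h\equiv 0$, so the conformal factor of $\phi_h^t$ vanishes identically and each $\phi_h^t$ is a strict contactomorphism; and since $B_k(r)=\pi_k^{-1}(B(r))$ is Reeb-invariant, $\mathcal{O}_{\alpha_k}(B_k(r))=B_k(r)$, so $\phi_h^1$ in fact $\alpha_k$-displaces $B_k(r)$.

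For the conclusion, set $\tpsi:=[(\phi_h^t)]\in\widetilde{\conto}(M_k,\xi_k)$; both $\tpsi$ and $\tpsi^{-1}=[(\phi_{-h}^t)]$ are represented by isotopies of strict contactomorphisms. For any $\tphi\in\widetilde{\contoc}(B_k(r),\xi_k)$, Proposition~\ref{prop : displacement} applied to $C_+^{\alpha_k}$ yields $C_+^{\alpha_k}(\tpsi\cdot\tphi)=C_+^{\alpha_k}(\tpsi)$, and then Corollary~\ref{lem : inegalite triangulaire strongly orderable prequantization} gives
\[
C_+^{\alpha_k}(\tphi)=C_+^{\alpha_k}\big(\tpsi^{-1}\cdot(\tpsi\cdot\tphi)\big)\leq C_+^{\alpha_k}(\tpsi^{-1})+C_+^{\alpha_k}(\tpsi\cdot\tphi)=C_+^{\alpha_k}(\tpsi^{-1})+C_+^{\alpha_k}(\tpsi).
\]
The energy estimate of Proposition~\ref{prop : spec selector strongly orderable}(5), applied to the autonomous $\alpha_k$-Hamiltonians $h$ and $-h$ (with $h\geq 0$, so $\min h=0$), bounds the right-hand side by $\max h+(-\min h)\leq r^2(\pi+\delta)$. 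Taking the supremum over $\tphi$ gives $C_{\alpha_k}(B_k(r))\leq r^2(\pi+\delta)$, and letting $\delta\to 0$ finishes the proof. I do not expect a genuine obstacle here: the geometric construction is the one already carried out for lens spaces, and the only new points are the verification that $(\phi_h^t)$ is strict (needed to invoke Corollary~\ref{lem : inegalite triangulaire strongly orderable prequantization}) and the observation that $B_k(r)$ equals its own $\alpha_k$-orbit (so that ordinary displacement of $B_k(r)$ upstairs coincides with $\alpha_k$-displacement, allowing Proposition~\ref{prop : displacement}); one should also record that $C_+^{\alpha_k}$ is indeed an $\alpha_k$-spectral selector in the sense of Definition~\ref{def : spectral selector}, i.e. that it is $C^1$-continuous, which comes from its construction in \cite{allais2023spectral}.
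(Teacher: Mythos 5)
Your proof is correct and takes essentially the same route as the paper: Steps~1--3 (Hind displacement, extension and cutoff, push-forward via $\Psi$, lift via Lemma~\ref{lem : contact lift}) are identical, and in Step~4 you simply re-derive Lemma~\ref{lem : capacite-energie pour strongly orderable} inline from Proposition~\ref{prop : displacement} and Corollary~\ref{lem : inegalite triangulaire strongly orderable prequantization} rather than citing it. The two remarks you make explicit --- that $(\phi_h^t)$ is strict because $h$ is Reeb-invariant, and that $B_k(r)=\pi_k^{-1}(B(r))$ equals its own $\alpha_k$-orbit so ordinary displacement upstairs is indeed $\alpha_k$-displacement --- are used implicitly in the paper (via the equivalence \eqref{eq : strict} and the hypotheses of Lemma~\ref{lem : capacite-energie pour strongly orderable}), so spelling them out is a small but genuine clarification rather than a departure.
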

This proposition will be an adaptation of the proof of Proposition \ref{prop : lower bound} together with the following Lemma which is an analogue of Theorem \ref{thm : capacite-energie}. First let us remark that 
\begin{equation}\label{eq : strict}\begin{aligned}\widetilde{\conto}^{\mathbb{T}^1}(M_k,\xi_k)&:=\left\{ \tpsi\in\widetilde{\conto}(L_k^{2n+1},\xi_k)\ \left|\
        \parbox{5.5cm}{there exists $(\psi_t)$ representing $\tpsi$
    such that $\phi_{\alpha_k}^s\circ\psi_t=\psi_t\circ\phi_{\alpha_k}^s$ for all $s\in\R$ and $t\in[0,1]$}\right.\right\}\\
    &=\left\{ \tpsi\in\widetilde{\conto}(L_k^{2n+1},\xi_k)\ \left|\
        \parbox{5cm}{there exists $(\psi_t)$ representing $\tpsi$
    such that $\psi_t^*\alpha=\alpha$ for all $t\in[0,1]$}\right.\right\}.
    \end{aligned}
    \end{equation}

\begin{lem}\label{lem : capacite-energie pour strongly orderable}
   Let $U$ be an open subset of $M_k$ and $\tpsi\in\widetilde{\conto}^{\mathbb{T}^1}(M_k,\xi_k)$. If $\Pi(\tpsi)$ $\alpha_k$-displaces $U$ then $C_{\alpha_k}(U)\leq C_+^{\alpha_k}(\tpsi^{-1})+C_+^{\alpha_k}(\tpsi).$
\end{lem}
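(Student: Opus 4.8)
The plan is to mimic the proof of Theorem \ref{thm : capacite-energie}, replacing the spectral selector $c_k$ on $\widetilde{\conto}(L_k^{2n+1},\xi_k)$ by the order spectral selector $C_+^{\alpha_k}$ on $\widetilde{\conto}(M_k,\xi_k)$, and the triangle inequality of Proposition \ref{lem : inegalite triangulaire} by Corollary \ref{lem : inegalite triangulaire strongly orderable prequantization}. The key point is that $C_+^{\alpha_k}$ is an $\alpha_k$-spectral selector in the sense of Definition \ref{def : spectral selector}: it is $C^1$-continuous and satisfies $C_+^{\alpha_k}(\tphi)\in\spec^{\alpha_k}(\tphi)$ by the last point of Proposition \ref{prop : spec selector strongly orderable}. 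Hence Proposition \ref{prop : displacement} applies verbatim: if $\tphi\in\widetilde{\contoc}(U,\xi_k)$ and $\Pi(\tpsi)$ $\alpha_k$-displaces $U$, then $C_+^{\alpha_k}(\tpsi\cdot\tphi)=C_+^{\alpha_k}(\tpsi)$.

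First I would fix an arbitrary $\tphi\in\widetilde{\contoc}(U,\xi_k)$ and write, exactly as in \eqref{eq : displacement},
\[
C_+^{\alpha_k}(\tphi)=C_+^{\alpha_k}(\tpsi^{-1}\cdot\tpsi\cdot\tphi)\leq C_+^{\alpha_k}(\tpsi^{-1})+C_+^{\alpha_k}(\tpsi\cdot\tphi),
\]
where the inequality uses the triangle inequality. Since $\tpsi\in\widetilde{\conto}^{\mathbb{T}^1}(M_k,\xi_k)$, by the second description of this set in \eqref{eq : strict} it is represented by a path $(\psi_t)$ with $\psi_t^*\alpha_k=\alpha_k$ for all $t$; this is precisely the hypothesis of Corollary \ref{lem : inegalite triangulaire strongly orderable prequantization}, which therefore gives the needed triangle inequality $C_+^{\alpha_k}(\tpsi^{-1}\cdot(\tpsi\cdot\tphi))\leq C_+^{\alpha_k}(\tpsi^{-1})+C_+^{\alpha_k}(\tpsi\cdot\tphi)$ without any ceiling function. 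Then I would invoke Proposition \ref{prop : displacement} with $c=C_+^{\alpha_k}$ to replace $C_+^{\alpha_k}(\tpsi\cdot\tphi)$ by $C_+^{\alpha_k}(\tpsi)$, obtaining $C_+^{\alpha_k}(\tphi)\leq C_+^{\alpha_k}(\tpsi^{-1})+C_+^{\alpha_k}(\tpsi)$. Taking the supremum over all $\tphi\in\widetilde{\contoc}(U,\xi_k)$ yields $C_{\alpha_k}(U)\leq C_+^{\alpha_k}(\tpsi^{-1})+C_+^{\alpha_k}(\tpsi)$, which is the claim.

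The only mild subtlety — and the step I would be most careful about — is checking that Proposition \ref{prop : displacement} genuinely applies, i.e. that $C_+^{\alpha_k}$ meets the hypotheses there: it must be $C^1$-continuous on $\widetilde{\conto}(M_k,\xi_k)$ and land in the $\alpha_k$-spectrum. Both are recorded in Proposition \ref{prop : spec selector strongly orderable} (continuity is implicit in the construction via $\ell_+^{\beta_k}$, and spectrality is point (6) there, combined with \eqref{eq : comparaison spectre}), and the nowhere-density of $\spec^{\alpha_k}(\tpsi)$ used inside the proof of Proposition \ref{prop : displacement} holds for any closed contact manifold by \cite[Lemma 2.11]{albers}. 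Everything else is a formal transcription; there is no real analytic obstacle, the content having already been isolated in Proposition \ref{prop : displacement} and Corollary \ref{lem : inegalite triangulaire strongly orderable prequantization}.
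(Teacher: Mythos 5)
Your proof is correct and follows essentially the same route as the paper: first apply Proposition \ref{prop : displacement} (valid because $C_+^{\alpha_k}$ is an $\alpha_k$-spectral selector by Proposition \ref{prop : spec selector strongly orderable}) to get $C_+^{\alpha_k}(\tpsi\cdot\tphi)=C_+^{\alpha_k}(\tpsi)$, then use Corollary \ref{lem : inegalite triangulaire strongly orderable prequantization} with \eqref{eq : strict} for the triangle inequality, and conclude by taking the supremum. One small point worth making explicit, as the paper does: the triangle inequality is applied in the form $C_+^{\alpha_k}(\tpsi^{-1}\cdot(\tpsi\tphi))\leq C_+^{\alpha_k}(\tpsi^{-1})+C_+^{\alpha_k}(\tpsi\tphi)$, so the hypothesis of the Corollary must be checked for $\tpsi^{-1}$ rather than $\tpsi$ — but this is immediate, since if $(\psi_t)$ represents $\tpsi$ with $\psi_t^*\alpha_k=\alpha_k$ then $(\psi_t^{-1})$ represents $\tpsi^{-1}$ with the same strictness property, i.e. $\tpsi^{-1}\in\widetilde{\conto}^{\mathbb{T}^1}(M_k,\xi_k)$ as well.
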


\begin{proof}
    Let $\tphi\in\widetilde{\contoc}(U,\xi_k)$. Then thanks to Proposition \ref{prop : displacement} $C_+^{\alpha_k}(\tpsi\tphi)=C_+^{\alpha_k}(\tpsi)$. Moreover since $\tpsi\in\widetilde{\conto}^{\mathbb{T}^1}(M_k,\xi_k)$ it implies that $\tpsi^{-1}\in\widetilde{\conto}^{\mathbb{T}^1}(M_k,\xi_k)$. Hence Corollary \ref{lem : inegalite triangulaire strongly orderable prequantization} together with \eqref{eq : strict} ensure that 
    \[C_+^{\alpha_k}(\tphi)=C_+^{\alpha_k}\left(\tpsi^{-1}\cdot(\tpsi\tphi)\right)\leq C_+^{\alpha_k}(\tpsi^{-1})+C_+^{\alpha_k}(\tpsi\tphi)=C_+^{\alpha_k}(\tpsi^{-1})+C_+^{\alpha_k}(\tpsi)\]
    which concludes the proof. 
\end{proof}

\begin{proof}[Proof of Proposition \ref{prop : upper bound for strongly orderable prequantization}]Let $r\in(0,R]$, $\delta>0$ be small enough so that $\sqrt{3+2\delta}<\lambda$.\\
\textbf{Step 1 : } Consider the Hamiltonian function $\widetilde{K}$ supported in $\mathbb{B}^{2n}(\lambda R)$ constructed in the proof of Proposition \ref{prop : lower bound} \begin{enumerate}
\item $\widetilde{K}$ take values in $[0,r^2(\pi+\delta)]$ 
\item its time $1$ Hamiltonian flow displaces $B_k(r)$.
\end{enumerate}

\noindent
\textbf{Step 2 : } Import the construction to $(W,\omega)$ using $\Psi$, i.e. 
\[G:W\to\R \quad  \quad 
    x\mapsto \left\{
    \begin{array}{ll}
       H(\Psi^{-1}(x)) & \text{ if } x\in \Psi(\mathbb{B}^{2n}(r))\\
        0 & \text{ otherwise}
    \end{array}.
\right. \]
\textbf{Step 3 : } Import the construction to $(M_k,\xi_k)$ using Lemma \ref{lem : contact lift}. More precisely the $\alpha_k$-Hamiltonian function 
$h : M_k\to\mathbb{R}_{\geq 0}$, $x\mapsto G(\pi_k(x))$ generates a flow of $\mathbb{T}^1$-equivariant contactomorphisms $(\phi_h^t)\subset\conto(M_k,\xi_k)$. Moreover
\begin{enumerate}
    \item $h\geq 0$ and $\max h\leq r^2(\pi+\delta)$
    \item $\phi_h^1(B_k(r))\cap B_k(r)=\emptyset$, where $B_k(r):=\pi_k^{-1}(\Psi(\mathbb{B}^{2n}(r)))$.
\end{enumerate}
\noindent
\textbf{Step 4 : } Hence thanks to Lemma \ref{lem : capacite-energie pour strongly orderable}
\[C_{\alpha_k}(B_k(r))\leq C_+^{\alpha_k}(\widetilde{\phi}^{-1})+C_+^{\alpha_k}(\widetilde{\phi})\]
where $\widetilde{\phi}:=[(\phi_h^t)]\in\widetilde{\conto}^{\mathbb{T}^1}(M_k,\xi_k)$. Finally, since the $\alpha_k$-Hamiltonian function $M_k\to\R_{\leq 0},\ x\mapsto -h(x)$, generates the contact isotopy $(\phi_h^t)^{-1}$ which represents $\widetilde{\phi}^{-1}$, Proposition \ref{prop : spec selector strongly orderable} implies that 
\[ C_+^{\alpha_k}(\widetilde{\phi}^{-1})+C_+^{\alpha_k}(\widetilde{\phi}) \leq \int_0^1\max h\ud t-\int_0^1 \min h\ud t\leq r^2(\pi+\delta).\]
Letting $\delta$ goes to $0$ finishes the proof.

\end{proof}

\bibliographystyle{amsplain}
\bibliography{paa} 

\end{document}